\newtheorem{theorem}{Theorem}[section]
\newtheorem{lemma}[theorem]{Lemma}
\newtheorem{proposition}[theorem]{Proposition}
\newtheorem{corollary}[theorem]{Corollary}
\theoremstyle{definition}
\newtheorem{definition}[theorem]{Definition}
\theoremstyle{notation}
\theoremstyle{remark}
\numberwithin{equation}{section}
\begin{document}

%
%
%
%
%

	\title{Topological boundaries of covariant representations}
	
	
	
	\author{Massoud Amini \and Sajad Zavar}
	
	
	%

	\address{
		Faculty of Mathematical Sciences, Tarbiat Modares University, Tehran 14115-134, Iran}
		\email{mamini@modares.ac.ir}           
	\address{
	Faculty of Mathematical Sciences, Kharazmi University, Tehran, Iran}
	\email{s.zavar@modares.ac.ir}           
	%


\subjclass{46M10, 37C85}	
\keywords{crossed product \and covariant representation \and boundary \and trace}

	\begin{abstract}
	We associate a boundary $\mathcal B_{\pi,u}$ to each covariant representation $(\pi,u,H)$ of $C^*$-dynamical system $(G,A,\alpha)$ and study the action of $G$ on $\mathcal B_{\pi,u}$ and its amenability properties. We relate rigidity properties of traces on the associated crossed product C*-algebra to  faithfulness of action of the group on this boundary.

	\end{abstract}

\maketitle
	
\section{Introduction}

The notion of boundary actions,  introduced by Furstenberg in \cite{f1} and \cite{f2}, is recently used in the study of  rigidity properties of  reduced group C*-algebras generated by the regular representation of discrete groups (more specifically, in problems such as simplicity, uniqueness of trace, and nuclear
embeddings \cite{bkko}, \cite{kk}, \cite{l}). The link between boundary actions and reduced
group C*-algebra was confirmed after the basic observation of Kalantar and Kennedy \cite{kk} that the Furstenberg and Hamana boundaries coincide, namely $C(\partial_FG) = I_G(\mathbb C)$,
where the right hand side is the $G$-injective envelope of complex numbers in the sense of
Hamana \cite{h2}. The properties of $\partial_FG$ are related to certain problems of interest in operator algebras. It is related to the
Ozawa's notion of ``tight nuclear embedding'' \cite{o2} of an
exact C*-algebra in a nuclear C*-algebra, which in turn embeds in the injective envelop of the original exact C*-algebra. Ozawa
proved such a tight embedding for the reduced C*-algebra of the free group $\mathbb F_n$, and Kalantar and Kennedy extended his result to an arbitrary discrete exact group $G$, by showing that the reduced C*-algebra $C_r^*(G)$ of $G$ tightly embeds in the (nuclear) C*-algebra
generated by  $C_r^*(G)$ and $C(\partial_FG)$ in $B(\ell^2(G))$.
The other problems are those of C*-simplicity and unique trace property for the reduced C*-algebra. Again it
is already known by a result of Powers \cite{po} that the free group $\mathbb F_2$ is
C*-simple and has the unique trace property. More general cases where handled using dynamical properties of the Furstenberg boundary: a
discrete group $G$ is C*-simple iff its action on the Furstenberg boundary $\partial_FG$
is free \cite{kk} and has
the unique trace property iff this action is
faithful \cite{bkko}, and there are faithful, non free actions on the Furstenberg boundary \cite{l}. This suggests that ``non triviality''
of the action on the Furstenberg boundary $\partial_FG$ (such as  being faithful or free) somehow measures the ``distance'' of $G$ from being amenable (c.f. \cite[Introduction]{bk}).

In an attempt to formulate the relation between ``boundaries'' with rigidity properties of the C*-algebra generated by the range of arbitrary representations, Bearden and Kalantar  associated a boundary $\mathcal B_u$ to  a unitary representation $u$  of $G$. They  related  properties of $\mathcal B_u$ (and action of $G$ on that) to rigidity properties of the C*-algebra $C_u^*(G)$ generated by the range of $u$. Bearden and Kalantar defined $\mathcal B_u$ as a
``relative'' Hamana $G$-injective envelop of the inclusion $\mathbb C{\rm id}_H\subseteq B(H_u)$,  naturally identified
with a $G$-invariant subspace of $B(H_u)$ and used the Choi-Effros product to make $\mathcal B_u$ into a C*-algebra. Back to the discussion of the previous paragraph, since amenability of $G$ characterizes by the existence of a translation invariant unital positive linear map: $\ell^\infty(G)\to \mathbb C$, given an arbitrary unitary representation $u$ of $G$, it is natural to seek for a ``minimal''
unital $G$-equivariant projection: $B(H_u)\to B(H_u)$, where now positivity is naturally replaced by complete positivity.
This is inline with the basic idea of Hamana in constructing the $G$-boundary \cite[Theorem 3.11]{h2} and
the idea of Bekka to define amenability of unitary representations \cite{b}. By such an analogy, Bearden and Kalantar defined $\mathcal B_u$ as the range of a minimal u.c.p. $G$-equivariant
projection.

In this paper, we introduce and study a generalization of the notion of the Furstenberg boundary of a discrete group $G$ to the setting of a general C*-dinamical system $(G,A,\alpha)$. We define the ``Furstenberg-Hamana boundary'' $\mathcal B_{\pi,u}$ of a given covariant representation $(\pi, u, H)$ of  $(G,A,\alpha)$ as a $G$-invariant subspace of $B(H)$ that carries a canonical C*-algebra structure. Our construction adapts that of Bearden and Kalantar and our boundary generalizes theirs. In many natural cases, including the case of Koopman representation, the Furstenberg-Hamana boundary of $G$ would be commutative, but it is non-commutative in general. We study various properties of this boundary and give some applications in C*-crossed products.

The motivation for this construction is three fold: first that most of the motivating examples of Bearden-Kalantar come from group actions (see for instance examples in \cite[section 3]{bk}) and are already related to a C*-dynamics (which is somehow overlooked in \cite{bk}). Second, the Kalantar-Kennedy affirmation of the Ozawa conjecture already contains the construction of a crossed product and shows that C*-dynamics have a significant role there. Third, the problems of simplicity and unique trace  are already of great importance for crossed product C*-algebras. One could add a rather methodological comment to these by reminding that recently an ``injective'' crossed product is introduced (c.f. \cite{bew}, \cite{bew2}, \cite{bew3}) which could eventually be related to the boundary in this more general sense.

The organization of the paper is as follows: In the next section we recall basic facts about  crossed products and injectivity. In Section \ref{bdry} we define the Furstenberg-Hamana boundary of a covariant representation, study its basic properties, and give several examples. We also find conditions for ``triviality'' of the boundary or its $G$-action.  Section \ref{am} discusses amenability of action on the boundary or that of representations involved. In Section \ref{e} we consider the problem of extending the $G$-action on the boundary to actions of certain subgroup of automorphisms of $G$. Finally, Section \ref{tr} discusses  rigidity properties of traces on crossed product C*-algebras and its relation to the new boundary. We postpone the study of simplicity to a forthcoming paper. In the whole paper we work only with discrete groups.

\section{Preliminaries}
Let $H$ be a complex Hilbert space. A closed self-adjoint subspace $V$ of $B(H)$ is an {\it operator system} if it contains the identity. A linear map $\phi: V\to W$ between operator systems completely positive (c.p.) if all of its amplifications $\phi^{(n)}: \mathbb M_n(V)\to\mathbb M_n(W)$ are positive in the canonical operator space structure on the matrix algebras in the domain and range, and completely
isometric (c.i.) if these are all isometric. The unital completely positive (u.c.p.), unital completely
isometric (u.c.i),  and contractive completely positive (c.c.p.) maps are defined naturally. The c.c.p. maps are known to be completely contractive (c.c) as well. For a discrete group $G$, an operator system $V$ is a $G$-operator system if there is an action of $G$ on $V$, that is, a group homomorphism from $G$ to the group of all bijective u.c.i. maps on $V$.
A linear map  $\phi: V\to W$ between operator systems is a $G$-map if it is
u.c.p. and $G$-equivariant. A $G$-embedding is a c.i. $G$-map. When $W\subseteq V$, a $G$-projection is an idempotent $G$-map.

For a u.c.p. map $\phi : A\to  B$ between C*-algebras, the {\it multiplicative domain} of $\phi$ is
the space
$$\{a \in A : \phi(aa^*) = \phi(a)\phi(a^*) \ {\rm and}\ \phi(aa^*) = \phi(a)\phi(a^*)\}.$$
For $a$ in the multiplicative
domain of $\phi$, we have $\phi(ab) = \phi(a)\phi(b)$ and $\phi(ba) = \phi(b)\phi(a)$, for all $b\in B$ \cite[Theorem 3.18]{p}.

Let $\mathcal C$ be a subcategory of the category of operator systems and u.c.p. maps. An object
$V$ in $\mathcal C$ is injective  if for every c.i. morphism $\iota: U \to W$ and every morphism $\psi : U \to V$ there is a morphisms $\tilde\psi : W \to V$ with $\tilde\psi\circ\iota=\psi$. We are particularly interested in the subcategory of $G$-operator
systems and $G$-maps. Hamana has shown that if $V$ is injective operator system then  the  $G$-operator system $\ell^\infty(G, V)$ is $G$-injective \cite[Lemma 2.2]{h2}.

Let $V$ be a $G$-operator systems. A $G$-extension of $V$ is a pair $(W , \iota)$ consisting of
a $G$-operator system $W$ and a c.i. morphism $\iota : V \to W$.
A $G$-extension $(W , \iota)$ of $V$ is $G$-injective if $W$ is $G$-injective. It is $G$-essential if
for every morphism $\phi : W\to U$ such that $\phi\circ\iota$ is c.i. on $V$, $\phi$ is c.i. on $W$. It is $G$-rigid if for every morphism $\phi: W\to W$ such that
$\phi\circ\iota=\iota$ on $V$, $\phi$ is the identity map on $W$.

A $G$-extension of $V$ which is both $G$-injective and $G$-essential is called  a $G$-injective envelope of $V$. Every  $G$-injective envelope of $V$ is known to be $G$-rigid \cite[Lemma 2.4]{h}.

Every $G$-operator system $V$ has an injective $G$-injective
envelope, denoted by $I_G(V)$, which is unique up to c.i. $G$-isomorphism. A unital $G$-C*-algebra is $G$-injective as a C*-algebra if and only if it is $G$-injective as a $G$-operator system. Since $\ell^\infty(G, B(H))$ is $G$-injective,  if $V \subseteq B(H)$ is a $G$-operator system, then we have a $G$-embedding of $I_G(V)$  into $\ell^\infty(G, B(H))$ and a
$G$-projection: $\ell^\infty(G, B(H))\to I_G(V)$, which makes $I_G(V)$ into a C*-algebra via the Choi–Effros product.

Let $A$ be a C*-algebra and $V\subseteq A$ be an operator subsystem. If there is a surjective idempotent u.c.p. map $\psi : A \to V$, then the
Choi-Effros product $a \cdot b = \psi(ab)$, turns $V$ into a C*-algebra, which is unique up to isomorphism (does
not depend on the map $\psi$) \cite{ce}.

Any action of $G$ on an operator system  $V$ induces an adjoint action on the weak*-compact convex
set of states on $V$. For a state $\nu$ on $V$, the corresponding Poisson map is a unital positive $G$-map from
$V$ to $\ell^\infty(G)$, defined by $$P_{\nu}(a)(g)=\nu(g^{-1}a),$$ for $g\in G$ and $a\in V$.

Let $G$ act on a C*-algebra
$A$ by automorphisms, given by a group homomorphism $\alpha: G\to$ Aut$(A)$. Let $A[G]$ be the *-algebra of formal finite sums $\sum a_tt$,  where $t\mapsto a_t$ is a map from $G$ into $A$ with finite support, under the operations governed by the rules
$$(at)(bs) = a\alpha_t(b)ts, \ (at)^* = \alpha_{t^{-1}}(a)t^{-1},$$
for $a, b \in A$ and $s, t\in G$.
For the C*-dynamical system $(G, A,\alpha)$, the notion of unitary representations
is replaced by that of covariant representations.

A {\it covariant representation} of $(G, A,\alpha)$ is a triple $(\pi, u, H)$ where $\pi$
and $u$ are respectively a representation of $A$ and a unitary representation of $G$ in
the same Hilbert space $H$, satisfying the covariance rule
$$u_t\pi(a)u_t^* = \pi(\alpha_t(a)),$$
for $t\in G$ and $a\in A$. A covariant representation $(\pi, u, H)$ gives rise to a *-homomorphism $\pi\rtimes u$ from $A[G]$
into $B(H)$, defined by
$$(\pi\rtimes u)(\sum a_tt) =\sum \pi(a_t)u_t.$$

The full crossed product $A\rtimes G$ of $(G, A, \alpha)$  is the
C*-completion of $A[G]$ in the norm
$$\Vert a\Vert_{\rm max} = \sup \Vert(\pi\rtimes u)(a)\Vert,$$
where the supremum  runs over all covariant representations $(\pi, u, H)$. Every covariant representation  $(\pi, u, H)$ induces a representation
of $A\rtimes G$, again denoted by $\pi\rtimes u$. Conversely, every non
degenerate representation of $A\rtimes G$ comes in this way from a covariant representation. This means that $A\rtimes G$  is the universal C*-algebra describing the covariant
representations of $(G, A, \alpha)$.

The analog of left regular representation for C*-dynamics is the induced covariant representation. Let $\sigma$ be a representation of $A$ on a Hilbert space $H_\sigma$ and
set $K = \ell^2(G, H_\sigma) = \ell^2(G) \otimes H_\sigma$. The covariant representation $(\tilde\sigma, \lambda, K)$ with
$$\tilde\sigma(a)\xi(t) = \sigma(\alpha_{t^{-1}}(a))\xi(t),\ \ \lambda_s\xi(t) =\xi(s^{-1}t),$$
for $\xi\in K$, is called the  covariant representation induced by $\sigma$, and is denoted by Ind$(\sigma)$. The reduced crossed product $A\rtimes_{\rm red} G$ is the C*-completion of $A[G]$ in the norm
$$\Vert a\Vert_{\rm red} = \sup \Vert{\rm Ind}(\sigma)(a)\Vert,$$
where the supremum runs over all representations $\sigma$ of $A$.
When $(\sigma, H_\sigma)$ is a faithful representation of $A$ one has $\Vert a\Vert_{\rm red} =\Vert$Ind$(\sigma)(a)\Vert$, for
all $a\in A[G]$, and therefore $A\rtimes_{\rm red}  G$ is faithfully represented into $\ell^2(G)\otimes H_\sigma$. Also there is a canonical surjective homomorphism from $A\rtimes_{\rm max} G$ onto $A\rtimes_{\rm red}  G$.

\section{boundary of covariant representations}\label{bdry}
Suppose that $ (G,A,\alpha) $ is a C*-dynamical system and $ (\pi,u, H) $ is a covariant representation. Let $A^\alpha:=\{a\in A: \alpha_t(a)=a\ (t\in G)\}$ be the fixed point algebra of the action. We are going to replace the relative $G$-injective envelop of the inclusion $\mathbb C{\rm id}_H\subseteq B(H_u)$ in \cite{bk} with the relative $G$-injective envelop of the inclusion $\pi(A^\alpha)\subseteq B(H_u)$.
We define
$$ \mathfrak M=\lbrace \phi:B(H)\rightarrow B(H):  \phi\ {\rm is\ a} \ G{\rm -map\ and}\ \phi={\rm id \ on\ } \pi(A^\alpha) \rbrace. $$

\begin{lemma}\label{min}
The set of $\mathfrak M $ with the partial order
$$ \phi\leq\psi \Longleftrightarrow \Vert \phi(x) \Vert \leq \Vert \psi(x) \Vert $$
contains a minimal element.
\end{lemma}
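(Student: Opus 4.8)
The plan is to apply Zorn's lemma in its dual form: I will show that $\mathfrak M$ is nonempty and that every chain in $\mathfrak M$ admits a lower bound, which then yields a minimal element. Nonemptiness is immediate, since the $G$-action on $B(H)$ is $x\mapsto u_gxu_g^*$ and hence $\mathrm{id}_{B(H)}$ is a $G$-map fixing $\pi(A^\alpha)$ pointwise, so $\mathrm{id}_{B(H)}\in\mathfrak M$. The natural topology to exploit is the point-weak* (point-ultraweak) topology on the set of maps $B(H)\to B(H)$: regarding $B(H)$ as the dual of the trace-class operators, Banach--Alaoglu together with Tychonoff's theorem show that the set of all u.c.p. maps from $B(H)$ to $B(H)$ is compact in this topology, because each such map is contractive and the defining conditions (linearity, unitality, complete positivity) are all preserved under pointwise weak* limits.

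First I would check that $\mathfrak M$ is a \emph{closed} subset of this compact set, hence itself compact. Indeed, $G$-equivariance passes to limits: for fixed $g$ the map $x\mapsto u_gxu_g^*$ is weak* continuous, so if $\phi_\lambda(u_gxu_g^*)=u_g\phi_\lambda(x)u_g^*$ for every $\lambda$, then the same identity holds for any point-weak* limit $\phi$; and the condition $\phi=\mathrm{id}$ on $\pi(A^\alpha)$ survives because on each such element the relevant net is constant. Thus $\mathfrak M$ is point-weak* compact.

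Now let $C$ be a chain in $\mathfrak M$. For each fixed $x\in B(H)$ the scalar $\Vert\phi(x)\Vert$ is monotone along $C$, so it decreases to $c(x):=\inf_{\phi\in C}\Vert\phi(x)\Vert$ as $\phi$ decreases through $C$. Directing $C$ by the reverse of its order makes it a net in the compact set $\mathfrak M$, which therefore has a point-weak* cluster point $\phi_0\in\mathfrak M$; passing to a convergent subnet and using that the norm on $B(H)$ is weak* lower semicontinuous (as the norm of a dual Banach space) gives $\Vert\phi_0(x)\Vert\le c(x)\le\Vert\psi(x)\Vert$ for every $\psi\in C$ and every $x$. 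Hence $\phi_0$ is a lower bound for $C$, and Zorn's lemma produces a minimal element of $\mathfrak M$.

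I expect the main subtlety to lie in the verification that the cluster point $\phi_0$ is genuinely a lower bound for the \emph{entire} chain; this is exactly where the monotonicity of $(\Vert\phi(x)\Vert)_{\phi\in C}$ must be combined with the weak* lower semicontinuity of the norm, and it is the reason the point-weak* topology is the correct setting rather than, say, the point-norm topology, in which the u.c.p. maps need not be compact. A minor point worth flagging is that the relation in the statement is really a preorder rather than a partial order, since distinct maps may have identical seminorms $\Vert\cdot(x)\Vert$; this causes no difficulty, as one may either pass to equivalence classes or simply read \emph{minimal} as \emph{having nothing strictly below it}, and the Zorn argument above applies verbatim.
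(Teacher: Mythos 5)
Your proof is correct and takes essentially the same route as the paper's: both establish point-weak* compactness of $\mathfrak M$, extract a point-weak* cluster point of a chain (viewed as a decreasing net), use weak* lower semicontinuity of the norm to see that the cluster point is a lower bound, and then invoke Zorn's lemma. Your extra verifications (closedness of $\mathfrak M$ inside the compact set of u.c.p.\ maps, and the remark that $\leq$ is only a preorder) merely make explicit what the paper leaves implicit.
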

\begin{proof}
First note that $\mathfrak   M $ is nonempty because it contains the identity. Let us observe that any decreasing net $ (\phi_{i})\subseteq\mathfrak  M $  has a lower bound. Since
$\mathfrak   M $ is compact in point-weak* topology, there exists a subnet $ (\phi_{j}),$ and an element $ \phi_{0} \in\mathfrak   M $ such that $ \phi_{ij}(x) \stackrel{w^{*}}{\longrightarrow} \phi_{0}(x), $ for each $x \in B(H). $
 Then
 $$ \Vert \phi_{0}(x) \Vert \leq \limsup_{j} \Vert \phi_{j}(x) \Vert = \inf_{i} \Vert \phi_{i}(x) \Vert,$$
 for each $x \in B(H)$, proving the claim.
 Now $\mathfrak   M $ has a minimal element by the Zorn lemma.
\end{proof}

The following result is essentially proved in \cite{bk}. We reproduce their proof here for the sake of completeness.

\begin{proposition} \label{b}
Suppose $ \phi_{0} $ is a minimal element of $\mathfrak M $. Then

$(i)$
$ \phi_{0}  $ is an idempotent.

$(ii)$
($u$-essentiality) Every $G$-map $ \psi: {\rm Im}(\phi_{0}) \rightarrow B(H) $ with $\psi={\rm id}$ on $\pi(A^\alpha)$, is isometric.

$(iii)$
($u$-minimality) ${\rm Im}(\phi_{0}) \subseteq B(H) $ is minimal among subspace of $ B(H) $ containing $\pi(A^\alpha)$ that are images of $G$-projections.

$(iv)$
($u$-rigidity) The identity map is the unique $G$-map on $ {\rm Im}(\phi_{0})$ which acts as identity on $\pi(A^\alpha)$.

$(v)$
($u$-injectivity) If $ X \subseteq Y $ are $G$-invariant subspaces of $ B(H) $ and $ \psi: B(H) \rightarrow B(H) $ is a $G$-map such that $ \psi(X) \subseteq {\rm Im}(\phi_{0}) $ then there is a $G$-map $ \tilde{\psi}: B(H) \rightarrow B(H) $ such that $ \hat{\psi}(Y) \subseteq {\rm Im}(\phi_{0}) $ and $ \tilde{\psi}=\psi $ on $X$.

\end{proposition}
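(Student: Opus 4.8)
The common engine behind all five parts is a single observation: every element of $\mathfrak M$ is u.c.p., hence a complete contraction, and $\mathfrak M$ is closed under composition (a composite of $G$-maps is a $G$-map and still restricts to the identity on $\pi(A^\alpha)$). Consequently, whenever $\psi$ is a $G$-map that agrees with the identity on $\pi(A^\alpha)$ and for which $\psi\circ\phi_0$ makes sense into $B(H)$, the composite lies in $\mathfrak M$ and satisfies $\|\psi(\phi_0(x))\|\le\|\phi_0(x)\|$, i.e. $\psi\circ\phi_0\le\phi_0$. Minimality of $\phi_0$ then forces the reverse inequality, so that $\|\psi(\phi_0(x))\|=\|\phi_0(x)\|$ for every $x$. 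Throughout I write $M:=\mathrm{Im}(\phi_0)$ and note $\pi(A^\alpha)\subseteq M$, since $\phi_0$ fixes $\pi(A^\alpha)$. Because $(i)$ will rest on $(ii)$, I would prove $(ii)$ first.

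For $(ii)$: given a $G$-map $\psi:M\to B(H)$ with $\psi=\mathrm{id}$ on $\pi(A^\alpha)$, the composite $\psi\circ\phi_0$ belongs to $\mathfrak M$ and is $\le\phi_0$, so minimality gives $\|\psi(\phi_0(x))\|=\|\phi_0(x)\|$ for all $x$; writing a general $y\in M$ as $y=\phi_0(x)$ this says exactly that $\psi$ is isometric on $M$. For $(i)$ I would apply $(ii)$ to the restriction $\phi_0|_M:M\to M\subseteq B(H)$, which is a $G$-map fixing $\pi(A^\alpha)$; hence $\phi_0|_M$ is isometric, so $\phi_0^{\,2}$ is $\le\phi_0$ and norm-equivalent to it, and likewise every iterate $\phi_0^{\,n}$. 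To promote this norm-level equivalence to genuine idempotence $\phi_0^{\,2}=\phi_0$ I would invoke the Hamana fixed-point mechanism: the collection of $G$-maps $M\to M$ fixing $\pi(A^\alpha)$ is convex, point–weak$^*$ compact, and a right-topological semigroup under composition, so by the Ellis--Namioka theorem it contains an idempotent $e$; by $(ii)$ such an $e$ is isometric, hence injective, and an injective idempotent satisfies $e(e(y)-y)=0$, so $e=\mathrm{id}_M$. Applying this to the closed subsemigroup generated by $\phi_0|_M$ yields $\phi_0|_M=\mathrm{id}_M$, i.e. $\phi_0^{\,2}=\phi_0$.

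With idempotence in hand the remaining parts are largely formal. For $(v)$, given $G$-invariant $X\subseteq Y$ and a $G$-map $\psi:B(H)\to B(H)$ with $\psi(X)\subseteq M$, I would simply set $\tilde\psi:=\phi_0\circ\psi$; this is a $G$-map with $\tilde\psi(Y)\subseteq\mathrm{Im}(\phi_0)=M$, and since $\phi_0|_M=\mathrm{id}$ by $(i)$ we get $\tilde\psi(x)=\phi_0(\psi(x))=\psi(x)$ for $x\in X$ (the symbol $\hat\psi$ in the statement being a typo for $\tilde\psi$). For $(iv)$, any $G$-map $\theta:M\to M$ fixing $\pi(A^\alpha)$ is isometric by $(ii)$; an idempotent such $\theta$ is already the identity by the injectivity argument above, and a general $\theta$ is reduced to this case through the same compact-semigroup extraction together with the injectivity property $(v)$, exactly as in Hamana's rigidity argument \cite[Lemma 2.4]{h}. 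Finally $(iii)$ follows from $(iv)$: if $\pi(A^\alpha)\subseteq N=\mathrm{Im}(\rho)\subseteq M$ for a $G$-projection $\rho$, then $\rho$ fixes $N\supseteq\pi(A^\alpha)$ and $\rho(M)\subseteq N\subseteq M$, so $\rho|_M:M\to M$ is a $G$-map fixing $\pi(A^\alpha)$; rigidity gives $\rho|_M=\mathrm{id}_M$, whence $M=\rho(M)\subseteq N$ and therefore $N=M$.

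The step I expect to be the main obstacle is precisely the passage inside $(i)$ (and again, in a one-sided form, inside $(iv)$) from the norm-level minimality supplied by Lemma \ref{min} to the algebraic statement $\phi_0^{\,2}=\phi_0$: the order on $\mathfrak M$ is only a preorder, so norm-equivalence of $\phi_0^{\,2}$ and $\phi_0$ does not by itself yield equality, and one must genuinely use the compactness plus the injective-idempotent observation to pin down the identity. Everything else is the ``compose with $\phi_0$ and invoke minimality'' trick. The only point where the present generality departs from \cite{bk} is the replacement of $\mathbb C\,\mathrm{id}_H$ by $\pi(A^\alpha)$, so I would be careful to record at the outset that $\pi(A^\alpha)$ is a $G$-fixed operator system contained in $M$, which is exactly what makes the composition argument produce members of $\mathfrak M$.
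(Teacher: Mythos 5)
Your parts $(ii)$, $(v)$, and the derivation of $(iii)$ from $(iv)$ are correct; in fact your $(ii)$ is slightly cleaner than the paper's own, since parametrizing $M={\rm Im}(\phi_0)$ as $\{\phi_0(x)\}$ avoids the appeal to idempotence that the paper makes there. But part $(i)$ --- which you yourself identify as the crux --- has a genuine gap, and since your $(iv)$ invokes the same ``compact-semigroup extraction,'' it inherits it. The Ellis--Namioka route fails at three separate points. First, compactness: the set of $G$-maps $M\to M$ fixing $\pi(A^\alpha)$ is not point-weak$^*$ compact, because ``range contained in $M$'' is not a point-weak$^*$-closed condition; $M$ is at best norm-closed (and even that is only known \emph{after} idempotence), never weak$^*$-closed in the cases of interest, e.g.\ $M\cong C(\partial_FG)$. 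Second, the ``closed subsemigroup generated by $\phi_0|_M$'' need not be a semigroup: in the point-weak$^*$ topology, composition is continuous in the outer variable ($\phi\mapsto\phi\circ\psi$ for fixed $\psi$) but not in the inner one, so for $t=\lim_i\phi_0^{n_i}$ one cannot conclude $\phi_0^{m}\circ t=\lim_i\phi_0^{m+n_i}$ unless $\phi_0$ is weak$^*$-continuous --- this is exactly where the enveloping-semigroup construction of topological dynamics uses continuity of the generating transformations, which is unavailable here. Third, even granting both points, your final inference is a non sequitur: knowing that every idempotent in $\overline{\{\phi_0^n|_M\}}$ must equal ${\rm id}_M$ does not give $\phi_0|_M={\rm id}_M$; for instance nothing in your argument excludes $\phi_0|_M$ being a nontrivial map of finite order (say $\phi_0^2|_M={\rm id}_M\neq\phi_0|_M$), and excluding precisely such behavior is what remains to be proven.

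The paper closes this gap with the Ces\`aro-averaging telescoping trick (Hamana's, reproduced from Bearden--Kalantar), which needs no topology on the set of maps at all: put $\phi^{(n)}=\frac1n\sum_{k=1}^n\phi_0^k\in\mathfrak M$; each $\phi_0^k\leq\phi_0$ by contractivity, hence $\phi^{(n)}\leq\phi_0$, and minimality forces $\Vert\phi^{(n)}(y)\Vert=\Vert\phi_0(y)\Vert$ for every $y\in B(H)$. Applying this to $y=x-\phi_0(x)$ and telescoping, $\phi^{(n)}(x-\phi_0(x))=\frac1n\big(\phi_0(x)-\phi_0^{n+1}(x)\big)$, so $\Vert\phi_0(x)-\phi_0^2(x)\Vert=\Vert\phi^{(n)}(x-\phi_0(x))\Vert\leq\frac2n\Vert x\Vert\to0$, i.e.\ $\phi_0^2=\phi_0$ exactly. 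With $(i)$ in hand, the paper proves $(iv)$ without any semigroup argument: for a $G$-map $\psi:M\to M$ fixing $\pi(A^\alpha)$, the composite $\psi\circ\phi_0$ has the same norms as $\phi_0$, hence is itself minimal in $\mathfrak M$, hence idempotent by $(i)$; then isometry of $\psi$ from $(ii)$ gives $\Vert\phi_0(x)-\psi\circ\phi_0(x)\Vert=\Vert\psi\circ\phi_0(x)-(\psi\circ\phi_0)^2(x)\Vert=0$. If you replace your semigroup step by this averaging argument, the rest of your write-up goes through essentially unchanged.
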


\begin{proof}
(i) Put
$ \phi^{(n)}=\frac{1}{n} \sum_{k=1}^{n} \phi_{0}^{k} $, then
each $\phi_0^k$ is a $G$-map satisfying $\phi_0^k={\rm id}$ on $\pi(A^\alpha)$ and $\phi^{(n)}\leq \phi_0$, therefore
by minimality of $ \phi_{0} $,  $ \Vert \phi^{(n)}(x) \Vert = \Vert \phi_{0}(x) \Vert $, for each $ x \in B(H) $. Hence
\begin{align*}
\Vert \phi_{0}(x)-\phi_{0}^{2}(x) \Vert &= \Vert \phi_{0}(x-\phi_{0}(x)) \Vert = \Vert \phi^{(n)}(x-\phi_{0}(x)) \Vert \\&= \frac{1}{n} \Vert \phi_{0}(x)-\phi_{0}^{n+1}(x) \Vert   \leq \frac{2}{n} \Vert x \Vert,
\end{align*}
for each $ x \in B(H) $. Thus $ \phi_{0}=\phi_{0}^{2} $.
\\
(ii) Let $ \psi:{\rm Im}(\phi_{0}) \rightarrow B(H) $ be a $G$-map  with $\psi={\rm id}$ on $\pi(A^\alpha)$,  then $ \psi\circ \phi_{0} \in\mathfrak  M $ and for any $ x \in {\rm Im}(\phi_{0}) $,
\begin{align*}
\Vert x \Vert =\Vert \phi_{0}(x) \Vert \leq \Vert \psi(\phi_{0}(x)) \Vert = \Vert \psi(x) \Vert \leq \Vert x \Vert.
\end{align*}
(iii) Suppose $ \phi :B(H)\rightarrow B(H) $ is a $G$-projection with $\pi(A^\alpha) \subseteq {\rm Im} (\phi) \subseteq {\rm Im}(\phi_{0}) $, then by (ii), $ \phi$ is isometric on ${\rm Im}(\phi_{0})$, hence
$$  \Vert \phi_{0}(x)-\phi(\phi_{0}(x)) \Vert = \Vert \phi\big(\phi_{0}(x)-\phi(\phi_{0}(x))\big) \Vert =0,  $$
and so $ \phi=$id on ${\rm Im}(\phi_{0}) $. \\
(iv) Let $ \psi:{\rm Im}(\phi_{0}) \rightarrow {\rm Im}(\phi_{0}) $ be a $G$-map acting as id on $\pi(A^\alpha)$. By (ii), $ \psi $ is isometric and for any $ x\in B(H) $,
$ \Vert \psi \circ \phi_{0}(x) \Vert \leq \Vert \phi_{0}(x) \Vert $, therefore $  \psi\circ \phi_{0}$ is  minimal in $\mathfrak  M $ and so by (i), a $G$-idempotent. For each $ x \in B(H) $, we have
\begin{align*}
\Vert \phi_{0}(x) - \psi \circ \phi_{0}(x) \Vert &= \Vert \psi\big( \phi_{0}(x) -\psi \circ \phi_{0}(x) \big) \Vert \\  &= \Vert \psi \circ \phi_{0}(x)-(\psi o \phi_{0})^{2}(x) \Vert =0
\end{align*}
 that is, $ \psi=$id on its domain.\\
(v) Let $ X \subseteq Y $ are $G$-invariant subspaces of $ B(H) $ and $ \psi: B(H) \rightarrow B(H) $ is a $G$-map such that $ \psi(X) \subseteq {\rm Im}(\phi_{0}) $. Put $ \tilde{\psi}=\phi_{0}\circ\psi $. Then  $ \tilde{\psi} $ is a $G$-map, $ \tilde{\psi}(Y) \subseteq {\rm Im}(\phi_{0}) $ and $ \tilde{\psi}=\psi $ on $X$, since
$ \tilde{\psi}(x) =\phi_{0}(\psi(x))=\psi(x) $, for $x\in X$. This means that we have the following commutative diagram
\begin{center}
$\xymatrix
{ B(H)\ar@{-->}[drr]^{\tilde{\psi}}&& \\ 
	B(H) \ar@{->}[rr]_{\psi} \ar@{<-}[u]^{\phi_0} && B(H)
}
$

\end{center}

as required.
\end{proof}

\begin{proposition} \label{unique}
The image of a minimal element of $\mathfrak M $ is unique up to isomorphism.
\end{proposition}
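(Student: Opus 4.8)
The plan is to run the standard Hamana-type uniqueness argument, using the $u$-rigidity of Proposition~\ref{b}$(iv)$ as the engine. Let $\phi_0$ and $\psi_0$ be two minimal elements of $\mathfrak M$. By Proposition~\ref{b}$(i)$ both are idempotent, so ${\rm Im}(\phi_0)$ and ${\rm Im}(\psi_0)$ are $G$-invariant operator subsystems of $B(H)$ containing $\pi(A^\alpha)$, each carrying its Choi--Effros C*-structure $a\cdot b=\phi_0(ab)$, respectively $a\cdot b=\psi_0(ab)$. Since the image of a $G$-projection is $G$-invariant, I restrict the two projections to the opposite images and set
$$f:=\psi_0|_{{\rm Im}(\phi_0)}\colon {\rm Im}(\phi_0)\to{\rm Im}(\psi_0),\qquad g:=\phi_0|_{{\rm Im}(\psi_0)}\colon {\rm Im}(\psi_0)\to{\rm Im}(\phi_0).$$
As restrictions of $G$-maps to $G$-invariant subspaces, $f$ and $g$ are themselves $G$-maps, and since $\phi_0$ and $\psi_0$ fix $\pi(A^\alpha)$ pointwise, so do $f$ and $g$.

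Next I would compose them. The map $g\circ f\colon {\rm Im}(\phi_0)\to{\rm Im}(\phi_0)$ is a $G$-map acting as the identity on $\pi(A^\alpha)$, so Proposition~\ref{b}$(iv)$ forces $g\circ f={\rm id}_{{\rm Im}(\phi_0)}$; symmetrically $f\circ g={\rm id}_{{\rm Im}(\psi_0)}$. Hence $f$ is a bijection with inverse $g$, and both $f$ and $g$ are u.c.p. This exhibits $f$ as a unital, $G$-equivariant complete order isomorphism between the two images.

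Finally I would upgrade the complete order isomorphism to a C*-isomorphism. The matrix order that each image inherits from $B(H)$ is precisely the matrix order of its Choi--Effros C*-algebra \cite{ce}, so $f$ is a unital complete order isomorphism of these two C*-algebras. Applying the Kadison--Schwarz inequality to the u.c.p. maps $f$ and $g=f^{-1}$, and using that $f$ preserves order, yields $f(a^*\cdot a)=f(a)^*\cdot f(a)$ for every $a$, so that every element lies in the multiplicative domain of $f$; thus $f$ is multiplicative and hence a $*$-isomorphism onto ${\rm Im}(\psi_0)$ (cf.\ \cite[Theorem~3.18]{p}). Being also $G$-equivariant, it is an isomorphism of the two boundaries.

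I expect the only genuinely delicate step to be this last one: the rigidity argument produces an isomorphism of operator systems almost formally, but one must confirm that it respects the two \emph{different} Choi--Effros products. The point to get right is that complete positivity in both directions (not mere positivity, which would only give a Jordan isomorphism) is what forces multiplicativity, via the two-sided Kadison--Schwarz estimate above; everything else---$G$-equivariance of the restrictions and the fact that they act as the identity on $\pi(A^\alpha)$---is routine.
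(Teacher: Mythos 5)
Your proof is correct and follows essentially the same route as the paper: both arguments restrict each minimal projection to the image of the other and invoke the $u$-rigidity of Proposition~\ref{b}$(iv)$ to see that the two restrictions are mutually inverse $G$-maps. The only difference is that you spell out the final upgrade from a unital $G$-equivariant complete order isomorphism to a $*$-isomorphism of the two Choi--Effros C*-structures (via Kadison--Schwarz and multiplicative domains), a step the paper leaves implicit.
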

\begin{proof}
Let $ \phi,\psi :B(H) \rightarrow B(H) $ be minimal elements of $\mathfrak  M $. By part (iv) of the above proposition $ \phi \circ \psi $ is identity on $ {\rm Im}(\phi) $ and   $ \psi \circ \phi $ is identity on $ {\rm Im}(\psi) $. Now the restriction of $ \psi$ to ${\rm Im}(\phi)$ is the required $G$-isomorphism onto ${\rm Im}(\psi)$.
\end{proof}

\begin{definition}
Suppose $ \phi $ is a minimal element of $\mathfrak M $. Then ${\rm Im}( \phi) $ is a $G$-C*-algebra under the Choi-Effros product, called the
Furstenberg–Hamana boundary (or simply boundary) of the covariant representation $ (\pi, u, H)$, and is denoted by
$\mathcal{B}_{\pi,u}$.
\end{definition}

By construction, $\pi(A^\alpha)\subseteq \mathcal{B}_{\pi,u}$. Since the Choi-Effros product induced by $\phi$ is the same as the usual product on $\pi(A^\alpha)$, $\pi(A^\alpha)$ is indeed a C*-subalgebra of $\mathcal{B}_{\pi,u}$.

Let us examine the boundary in some canonical examples. When $G$  acts via $\alpha$ on a discrete set $S$, considered as a measure space
with the counting measure and $\lambda^S: G\to B(\ell^2(S))$ is  the corresponding Koopman
representation $\lambda^S_g(\delta_s) = \delta_{gs}$, for $g\in G$ and $s\in S$, and $M:\ell^\infty(S)\to B(\ell^2(S))$ is  the multiplication represetation $M_f\xi=f\xi$, for $f\in\ell^\infty(S), \xi\in \ell^2(S)$, then $(M, \lambda^S, \ell^2(S))$ is a covariant representation. Since the map $\phi: B(\ell^2(S))\to \ell^\infty(S)$ defined by $\phi(x)(s)=\langle x\delta_s,\delta_s\rangle$ is a $G$-projection, $\mathcal{B}_{M,\lambda^S}$ is also characterized as the image of a minimal $G$-projection on $\ell^\infty(S)$. Hence there must exist a compact $G$-space $\partial_\alpha(G,S)$ such
that $\mathcal{B}_{M,\lambda^S} = C(\partial_\alpha(G,S))$. Bearden and Kalantar also conclude that  $\mathcal{B}_{\lambda^S} = C(\partial(G,S))$, for some compact $G$-space $\partial(G,S)$. The space $\partial(G,S)$  is trivial if and only if the action is amenable
in the sense of Greenleaf \cite{g}, that is, there is an invariant mean on $\ell^\infty(S)$. Also $\partial(G,S) = \partial_F(G)$ if and only if each pint $s\in S$ has an amenable stablizer \cite[Example 3.9]{bk}. These $G$-spaces are not the same in general. If there is a $G$-projection   $\phi_0: \ell^\infty(S)\to \ell^\infty(S)^\alpha=\ell^\infty(S/G)$, then by the above argument, its range is the same as the boundary, i.e., $\mathcal{B}_{M,\lambda^S} =\ell^\infty(S/G)$. On the other hand, the composition of $\phi_0$ with any state of $\ell^\infty(S/G)$ is an invariant mean on $\ell^\infty(S)$ (since $G$ acts trivially on the orbit space $S/G$), and so in this case, $\mathcal{B}_{\lambda^S} =\mathbb C$. If the action is not transitive, the boundary we defined takes a non trivial value (encoding some dynamical data), while the Bearden-Kalantar boundary is trivial.

As another example, let $\lambda^G$ and $\rho^G$ be the left and right regular representation of $G$ and note that the ranges of these representations commute. In particular, $u_g:=\lambda_g^G\rho_g^G$ defines a representation of $G$, and if $C_u^*(G)$ is the $C^*$-algebra generated by the range of $u$ in $B(\ell^2(G))$, then $G$ acts on $C_u^*(G)$ by $\alpha:=$Ad$_{\lambda^G}$. If $\iota$ is the inclusion map of $C_u^*(G)$ in $B(\ell^2(G))$, then $(\iota,\lambda^G, \ell^2(G))$ is a covariant pair, and since the Dirac function $\delta_e$ at the neutral
element $e\in G$ is an invariant vector for $u$, the corresponding vector functional
is an invariant state on $B(\ell^2(G))$. Thus $\mathcal{B}_{\lambda^G}=\mathbb C$ is trivial.  On the other hand, if $G$ is not an ICC group, there is $g\neq e$ with finite conjugacy class. Then $u_e\neq x:=\sum_{h\in G} u_{hgh^{-1}}\in C_u^*(G)^\alpha$, thus  $\mathcal{B}_{\iota, \lambda^G}\neq \mathbb C$. Next, since $\delta_e$ is invariant under $u$ (and along with its scalar multiples form the set of all invariant vectors of $u$), so is $\ell^2(G)\ominus\mathbb C\delta_e=\ell^2(G\backslash\{e\})$, and  if $u_0$ is the subrepresentation of $u$ obtained by restricting to this subspace (which  is  the same as the Koopman representation associated to
the action of $G$ on $S = G\backslash\{e\}$ by conjugation and is unitarily equivalent to the direct
sum of quasi-regular representations associated to stabilizer subgroups), then for
each $s\in S$, the stabilizer subgroup at $s$  is the same as the centralizer of $s$ in $G$. When all of these stabilizer subgroups are amenable (like when $G=\mathbb F_2$ is the free group on two generators) then for the inclusion $\iota_0$ of $C_{u_0}^*(G)$ in $B(\ell^2(S))$ we have the covariant pair $(\iota_0, \lambda^G, \ell^2(S))$ with $\mathcal{B}_{ \lambda^G}=C(\partial_FG)$ \cite[Example 3.10]{bk}, whereas $\mathcal{B}_{\iota_0, \lambda^G}$ is noncommutative: since $G$ acts on $C_{u_0}^*(G)$ by an inner action (namely, by  $\alpha:=$Ad$_{\lambda^G}$), which is exterior equivalent to the trivial action, the fixed point algebra $C_{u_0}^*(G)^\alpha$ is isomorphic with $C_{u_0}^*(G)$ (c.f., \cite[Remark 3.9]{pi}), and  $\mathcal{B}_{\iota_0, \lambda^G}$ should contain a copy of $C_{u_0}^*(G)$, which is noncommutative, for instance for $G=\mathbb F_2$.

As the third example, recall that a non-singular action of $G$   on a probability space
$(X, \nu)$ is Zimmer-amenable if there is a $G$-equivariant unital positive projection: $\ell^\infty(G)\bar\otimes L^\infty(X, \nu)\to L^\infty(X, \nu)$  \cite{z}. Since $G$ is  discrete, this is equivalent to $G$-injectivity of $L^\infty(X, \nu)$  \cite[Remark 2.3]{h2}. When we have such an amenable action, there is a $G$-equivariant
embedding: $C(\partial_FG)\hookrightarrow L^\infty(X, \nu)$ and a  $G$-equivariant projection: $ L^\infty(X, \nu) \to C(\partial_FG)$. Thus $\mathcal{B}_{\lambda^X}=C(\partial_FG)$ \cite[Example 3.11]{bk}. On the other hand, for the covariant pair $(M, \lambda^X, L^2(X,\nu))$, with $M$ and $\lambda^X$ being the multiplication representation of $L^\infty(X,\nu)$ and the Koopman representation
associated to the action of $G$ on $X$, both in $L^2(X,\nu)$, since $L^\infty(X, \nu)$ is  $G$-injective,  $\mathcal{B}_{M,\lambda^X}=L^\infty(X, \nu)^G=L^\infty(X/G, \pi_*\nu)$, where $\pi_*\nu$ is the pullback of $\nu$ along the quotient map $\pi: X\to X/G$.

One expects that ``triviality'' of the boundary should be equivalent to some sort of amenability
property. In fact, it is shown by Bearden and Kalantar \cite[Proposition 3.12]{bk} that the boundary $\mathcal B_u$ of a unitary representation $u$ is trivial (that is, isomorphic to $\mathbb C$) if and only if $u$
is an amenable representation, in the sense of Bekka \cite{b}. In the case of covariant representations of a C*-dynamical system  $ (G,A,\alpha)$, the fixed point algebra plays the role of ``trivial'' boundary, and the notion of $G$-injectivity of Buss-Echterhoff-Willett is the analog of the amenability of Bekka.
Let us recall the notion of $G$-injectivity for $G$-C*-algebras \cite[Definition 2.1]{bew}.

\begin{definition}
An equivariant ccp map $\phi:A \to B$ between $G$-C*-algebras is $G$-injective if
for any equivariant injective $*$-homomorphism  $\iota: A \to C$ there is  an equivariant contractive completely positive (ccp) map  $\tilde\phi:C \to B$ such that the following diagram:

\[
\begin{tikzcd}[column sep=small]
	A \arrow{r}{\iota}  \arrow{rd}{\phi}
	& C \arrow{d}{\tilde\phi} \\
	& B
\end{tikzcd}
\]

\noindent commutes.
A $G$-C*-algebra $B$ is $G$-injective if any  ccp $G$-map: $A \to B$ is $G$-injective.

\end{definition}

Following the basic idea of ``relative'' injectivity in \cite{p2}, if in the above definition, we require that all the morphisms are ccp and the objects $A$, $B$, and $C$ are operator systems included in $B(H)$, for some Hilbert space $H$, we get the notion of $G$-injectivity of a $G$-map: $A\to B$ relative to $B(H)$. In this case, we say that $B$ is $G$-{\it injective relative to} $B(H)$ (or, the inclusion $B\subseteq B(H)$ is $G$-injective) if  for each $A\subseteq B(H)$, any ccp $G$-map: $A \to B$ is $G$-injective relative to $B(H)$. For a representation $u: G\to B(H_u)$, we say that a $G$-operator system $A\subseteq B(H_u)$ is $u$-{\it injective}, if the inclusion $A\subseteq B(H_u)$ is $G$-injective (compare with Proposition \ref{b}$(v)$).

The notion of amenable unitary representation is introduced by Bekka \cite[Definition 1.1]{b}. A unitary representation $u: G\to B(H)$ is amenable if there is a unital positive $G$-projection
$\phi : B(H) \to \mathbb C$, where $G$ acts on $B(H)$ by $Ad_u$. This is to say that the inclusion $\mathbb C1\subseteq B(H)$ is $G$-injective. In this case, the Bearden-Kalantar boundary is trivial, that is,  $ \mathcal{B}_{u}\cong \mathbb C$.

We say that the covariant representation $(\pi, u, H)$ is amenable if the inclusion $\pi(A^\alpha)\subseteq B(H)$ is $G$-injective, or equivalently, there is a $G$-projection: $B(H)\to \pi(A^\alpha)$. We have the following characterizations of ``trivial'' boundary in our setting.

 \begin{proposition} \label{trivial}
 	Let $(\pi, u, H)$ be a covariant representation of the C*-dynamical system  $ (G,A,\alpha)$. Then
 	 $ \mathcal{B}_{\pi,u}= \pi(A^\alpha)$ iff $(\pi, u, H)$ is amenable.
 \end{proposition}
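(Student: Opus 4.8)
The plan is to prove the two implications separately, exploiting the definition of amenability as the existence of a $G$-projection $B(H)\to\pi(A^\alpha)$ together with the minimality/idempotency machinery of Proposition \ref{b}.

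For the easy direction, suppose $\mathcal B_{\pi,u}=\pi(A^\alpha)$. By definition $\mathcal B_{\pi,u}={\rm Im}(\phi_0)$ for a minimal element $\phi_0\in\mathfrak M$, which by Proposition \ref{b}(i) is idempotent. Thus $\phi_0:B(H)\to B(H)$ is a $G$-map whose image equals $\pi(A^\alpha)$, i.e. $\phi_0$ is precisely a $G$-projection from $B(H)$ onto $\pi(A^\alpha)$. This is exactly the asserted amenability of $(\pi,u,H)$, so this direction is essentially immediate from the construction.

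For the substantial direction, assume $(\pi,u,H)$ is amenable, so there is a $G$-projection $\psi:B(H)\to\pi(A^\alpha)$. I would argue that in this case $\phi_0$ can be taken to have image $\pi(A^\alpha)$, hence (by uniqueness up to $G$-isomorphism, Proposition \ref{unique}) the boundary equals $\pi(A^\alpha)$. The clean route is this: since $\psi$ is a $G$-map with $\psi={\rm id}$ on $\pi(A^\alpha)$ (idempotency onto $\pi(A^\alpha)$ forces this), we have $\psi\in\mathfrak M$. Now consider any minimal $\phi_0\in\mathfrak M$. Because $\psi$ maps $B(H)$ into $\pi(A^\alpha)\subseteq{\rm Im}(\phi_0)$, the composite $\psi\circ\phi_0$ is again in $\mathfrak M$ and lands inside $\pi(A^\alpha)$. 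The key inequality is $\|\psi(\phi_0(x))\|\le\|\phi_0(x)\|$ (as $\psi$ is u.c.p., hence contractive), so $\psi\circ\phi_0\le\phi_0$ in the order of $\mathfrak M$; minimality of $\phi_0$ then gives $\|\psi(\phi_0(x))\|=\|\phi_0(x)\|$ for all $x$. I then want to upgrade this norm equality to the conclusion ${\rm Im}(\phi_0)\subseteq\pi(A^\alpha)$. The natural tool is the $u$-essentiality of Proposition \ref{b}(ii): the restriction $\psi|_{{\rm Im}(\phi_0)}$ is a $G$-map fixing $\pi(A^\alpha)$, hence isometric, but its range lies in $\pi(A^\alpha)$; combining isometry with $\pi(A^\alpha)\subseteq{\rm Im}(\phi_0)$ and the $u$-minimality in Proposition \ref{b}(iii) should pin ${\rm Im}(\phi_0)$ down to $\pi(A^\alpha)$.

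The main obstacle I anticipate is the last step, converting the norm-equality into an honest containment ${\rm Im}(\phi_0)=\pi(A^\alpha)$ rather than merely an isometric copy. The cleanest argument is probably to invoke minimality directly: $\psi$ itself (the given $G$-projection) is a $G$-projection onto $\pi(A^\alpha)$ with $\pi(A^\alpha)={\rm Im}(\psi)$, so $\pi(A^\alpha)$ is itself the image of a $G$-projection containing $\pi(A^\alpha)$; by $u$-minimality (Proposition \ref{b}(iii)) no image of a $G$-projection containing $\pi(A^\alpha)$ can be properly smaller than ${\rm Im}(\phi_0)$, which forces ${\rm Im}(\phi_0)\subseteq{\rm Im}(\psi)=\pi(A^\alpha)$, and the reverse containment holds by construction. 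Care is needed to ensure the order and idempotency hypotheses of part (iii) genuinely apply to $\psi$; I would verify that $\psi$ is idempotent with image exactly $\pi(A^\alpha)$, which is where the precise definition of amenability (a $G$-projection, i.e. an idempotent $G$-map) is used.
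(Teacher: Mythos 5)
Your proposal is correct, and its decisive step differs slightly in mechanism from the paper's. Both proofs treat the direction ``$\mathcal B_{\pi,u}=\pi(A^\alpha)\Rightarrow$ amenable'' identically and trivially: the minimal idempotent $\phi_0$ is itself a $G$-projection of $B(H)$ onto $\pi(A^\alpha)$. For the converse, the paper argues that the given $G$-projection $\psi:B(H)\to\pi(A^\alpha)$ is itself a minimal element of $\mathfrak M$ (asserted as ``clearly''), so that by definition the boundary is its image $\pi(A^\alpha)$; you instead fix an arbitrary minimal $\phi_0\in\mathfrak M$ and apply the $u$-minimality of Proposition \ref{b}$(iii)$ to $\psi$, noting that $\psi$ is a $G$-projection with $\pi(A^\alpha)\subseteq\operatorname{Im}(\psi)=\pi(A^\alpha)\subseteq\operatorname{Im}(\phi_0)$, which forces $\operatorname{Im}(\phi_0)=\pi(A^\alpha)$ on the nose (so you do not even need Proposition \ref{unique}). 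Your route has the advantage of not requiring the unproved minimality claim: the paper's ``clearly'' does need a small argument, e.g.\ if $\phi\in\mathfrak M$ satisfies $\phi\le\psi$ then $\psi\bigl(x-\psi(x)\bigr)=0$ gives $\phi\bigl(x-\psi(x)\bigr)=0$, and $\phi=\mathrm{id}$ on $\pi(A^\alpha)$ gives $\phi(\psi(x))=\psi(x)$, whence $\phi=\psi$. One remark: your middle paragraph (the norm equality $\Vert\psi(\phi_0(x))\Vert=\Vert\phi_0(x)\Vert$ obtained from minimality of $\phi_0$, and the attempt to upgrade it via essentiality) is inconclusive, as you yourself note, but it is also dispensable --- your final argument via Proposition \ref{b}$(iii)$ is self-contained and closes the proof; in a written version you should delete the detour and keep only that step.
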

 \begin{proof}
If $\pi(A^\alpha)$ is $G$-injective then there is a $G$-projection $\phi_0: B(H)\to \pi(A^\alpha)$, which is then clearly a minimal element of the set $ \mathfrak M$ above. The converse is trivial, as the inclusion $ \mathcal{B}_{\pi,u}\subseteq B(H)$ is $G$-injective (i.e., $ \mathcal{B}_{\pi,u}$ is $u$-injective).

 \end{proof}

Another aspect of ``triviality'' is the triviality of action. The next result which extends \cite[Proposition 3.17]{bk} takes care of trivial actions.

\begin{proposition} \label{trivial2}
	Every element of $G$ with finite conjugacy class acts trivially on $ \mathcal{B}_{\pi,u}$  for any
	covariant representation $(\pi, u, H)$.
\end{proposition}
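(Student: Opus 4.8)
The plan is to exploit the $u$-rigidity of the boundary (Proposition \ref{b}$(iv)$) together with an averaging over the conjugacy class of $g$. Write the (finite) conjugacy class of $g$ as $C_g=\{g_1,\dots,g_n\}$ with $g=g_1$, and define a map $\Phi:B(H)\to B(H)$ by
\[
\Phi(x)=\frac{1}{n}\sum_{i=1}^{n}u_{g_i}xu_{g_i}^{*}.
\]
First I would check that $\Phi\in\mathfrak M$. It is u.c.p., being a convex combination of the $*$-automorphisms $\mathrm{Ad}_{u_{g_i}}$; it fixes $\pi(A^\alpha)$ pointwise, since for $a\in A^\alpha$ one has $u_{g_i}\pi(a)u_{g_i}^{*}=\pi(\alpha_{g_i}(a))=\pi(a)$; and it is $G$-equivariant, because for $s\in G$ the map $g_i\mapsto sg_is^{-1}$ merely permutes $C_g$, so that
\[
u_s\Phi(x)u_s^{*}=\frac{1}{n}\sum_{i=1}^{n}u_{sg_is^{-1}}(u_sxu_s^{*})u_{sg_is^{-1}}^{*}=\Phi(u_sxu_s^{*}).
\]

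Next, since $\mathcal B_{\pi,u}=\mathrm{Im}(\phi_0)$ is a $G$-invariant subspace of $B(H)$ and each $u_{g_i}(\cdot)u_{g_i}^{*}$ preserves it, $\Phi$ restricts to a $G$-map $\Phi|_{\mathcal B_{\pi,u}}:\mathcal B_{\pi,u}\to\mathcal B_{\pi,u}$ that is the identity on $\pi(A^\alpha)$. By $u$-rigidity this restriction must be the identity. Denoting by $\beta_{g_i}$ the $*$-automorphism of the C*-algebra $(\mathcal B_{\pi,u},\cdot)$ induced by $g_i$ (with $\cdot$ the Choi--Effros product, so that $\beta_{g_i}$ has the underlying linear map $x\mapsto u_{g_i}xu_{g_i}^{*}$), this reads
\[
\frac{1}{n}\sum_{i=1}^{n}\beta_{g_i}=\mathrm{id}\qquad\text{on }\mathcal B_{\pi,u}.
\]

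It then remains to deduce that each $\beta_{g_i}$ is already the identity. Working inside the C*-algebra $\mathcal B_{\pi,u}$, each $\beta_{g_i}$ is u.c.p. and hence satisfies the Kadison--Schwarz inequality $\beta_{g_i}(x^{*}\cdot x)\ge\beta_{g_i}(x)^{*}\cdot\beta_{g_i}(x)$, while operator convexity of $y\mapsto y^{*}\cdot y$ supplies the complementary estimate. Combining these with the displayed relation gives the chain
\[
x^{*}\cdot x=\frac{1}{n}\sum_{i}\beta_{g_i}(x^{*}\cdot x)\ \ge\ \frac{1}{n}\sum_{i}\beta_{g_i}(x)^{*}\cdot\beta_{g_i}(x)\ \ge\ \Big(\frac{1}{n}\sum_{i}\beta_{g_i}(x)\Big)^{*}\cdot\Big(\frac{1}{n}\sum_{i}\beta_{g_i}(x)\Big)=x^{*}\cdot x,
\]
so all inequalities are equalities. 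The middle equality $\frac{1}{n}\sum_i\beta_{g_i}(x)^{*}\cdot\beta_{g_i}(x)=x^{*}\cdot x$ is exactly the vanishing of the ``variance'' $\frac{1}{n}\sum_i(\beta_{g_i}(x)-x)^{*}\cdot(\beta_{g_i}(x)-x)=0$; being a sum of positive elements of a C*-algebra equal to zero, each summand vanishes, whence $\beta_{g_i}(x)=x$ for every $i$ and every $x\in\mathcal B_{\pi,u}$. In particular $\beta_g=\mathrm{id}$, i.e. $g$ acts trivially on $\mathcal B_{\pi,u}$.

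The main obstacle I anticipate is this last step: the averaging in the second paragraph necessarily discards the information about the individual element $g$ in order to restore $G$-equivariance, so one must recover it through the fact that the identity is extreme among u.c.p. maps. The delicate point is that all the C*-operations here---the products $x^{*}\cdot x$, positivity, Kadison--Schwarz, and operator convexity---must be read off in the Choi--Effros structure of $\mathcal B_{\pi,u}$ rather than in the ambient product of $B(H)$; once one consistently computes in $(\mathcal B_{\pi,u},\cdot)$ the estimates are routine, but keeping the two products separate is where care is required.
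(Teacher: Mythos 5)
Your proof is correct, and its first half is exactly the paper's: average $\mathrm{Ad}_{u_h}$ over the finite conjugacy class, note that conjugation by any $s\in G$ merely permutes the class so that the average is $G$-equivariant and fixes $\pi(A^\alpha)$, and apply $u$-rigidity (Proposition \ref{b}$(iv)$) to conclude that the average is the identity on $\mathcal{B}_{\pi,u}$. The two arguments genuinely differ in how they pass from ``the average of the $\mathrm{Ad}_{u_h}$ is the identity'' to ``each $\mathrm{Ad}_{u_h}$ is the identity''. The paper does this by extremality of projections in the positive part of the unit ball (so every projection of $\mathcal{B}_{\pi,u}$ is fixed by each $\mathrm{Ad}_{u_h}$) together with the quoted fact that an injective C*-algebra is generated by its projections \cite[Lemma 3.16]{bk}; this leans on the structural input that $\mathcal{B}_{\pi,u}$, being the image of a u.c.p. projection on $B(H)$, is injective as a C*-algebra. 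Your Kadison--Schwarz/variance computation in the Choi--Effros algebra $(\mathcal{B}_{\pi,u},\cdot)$ needs none of that: it uses only that each $\beta_{g_i}$ is u.c.p. for the Choi--Effros structure (they are in fact $*$-automorphisms, since $\phi_0$ is $G$-equivariant and $\mathrm{Ad}_{u_{g_i}}$ is multiplicative on $B(H)$), operator convexity of $y\mapsto y^{*}\cdot y$, and the fact that a finite sum of positive elements vanishes only if each summand does. In effect you prove the general lemma that the identity map of a unital C*-algebra cannot be a nontrivial finite convex combination of u.c.p. maps; this is more elementary and more portable (it would apply even if the boundary were not injective or had few projections), at the cost of a slightly longer computation, whereas the paper's route is shorter given the cited lemma. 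Your insistence on reading all products, positivity, and the Schwarz inequality in the Choi--Effros structure rather than in the ambient product of $B(H)$ is precisely the right care, and the argument is complete as written.
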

\begin{proof}
	Let $F=$Conj$(g)$ be a finite conjugacy class of some element $g$. Then $\phi:=\frac{1}{|F|} \sum_{h\in F} {\rm Ad}_{u_h}$ is a $G$-map on $ \mathcal{B}_{\pi,u}$,
	hence $\phi$ is the identity by $u$-rigidity. Since projections are extreme points of the positive part
	of the unit ball of a C*-algebra, $u_hpu_h^* = p$, for every $h\in F$ and every projection $p \in \mathcal{B}_{\pi,u}$.
	Since an injective C*-algebra is generated by its projections \cite[Lemma 3.16]{bk}, Ad$_{u_h}$ is the identity map on $\mathcal{B}_{\pi,u}$, for every $h\in F$.
\end{proof}

It is natural to expect $ \mathcal{B}_{\pi,u}$ to behave naturally under the restriction and induction of representations. This is indeed the case by next result, where in the induction case we use weak containment assumptions (see \cite{d} for details).

First we need to know how $ \mathcal{B}_{\pi,u}$ behaves under weak containment of representations. If  $(\pi, u, H)$ is  a covariant representation of the C*-dynamical system $ (G,A,\alpha) $, we denote the C*-completion of the range of $\pi\rtimes u$ on the algebraic crossed product $A\rtimes_{\rm alg} G$ by $A\rtimes_{\pi, u} G$. We say that  a covariant representation $(\pi, u, H)$ is {\it weakly contained} in another  covariant representation $(\sigma, v, K)$, and write $(\pi, u, H)\preceq (\sigma, v, K)$ if the identity map on $A\rtimes_{\rm alg} G$ extends to a *-homomorphism: $ A\rtimes_{\sigma, v} G\to A\rtimes_{\pi, u} G$, and in this case, refer to  the latter as the canonical map between these C*-algebras. We say that   $(\pi, u, H)$ and $ (\sigma, v, K)$ are weakly equivalent if each is weakly contained in the other.

\begin{lemma} \label{wc}
	Let	$(\pi, u, H)$ and $(\sigma, v, K)$ be covariant representations of $ (G,A,\alpha)$, then there is a
	$G$-map from $\mathcal{B}_{\sigma,v}$ to $\mathcal{B}_{\pi,u}$  if and only if there is a $G$-map from $B(K)$ to $B(H)$.  In particular,
	if $(\pi, u, H)$  is weakly contained in  $(\sigma, v, K)$,  there is a $G$-map: $ \mathcal{B}_{\sigma,v}\to\mathcal{B}_{\pi,u}$.
\end{lemma}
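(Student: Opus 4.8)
The plan is to prove the equivalence by composing the canonical maps attached to the two boundaries, and then to settle the ``in particular'' clause by manufacturing, out of weak containment, a $G$-map $B(K)\to B(H)$ to which the equivalence applies. Write $\phi_0$ for a minimal element of $\mathfrak M$ (for $(\pi,u,H)$) and $\psi_0$ for the analogous minimal idempotent for $(\sigma,v,K)$, so that $\mathcal B_{\pi,u}={\rm Im}(\phi_0)$ and $\mathcal B_{\sigma,v}={\rm Im}(\psi_0)$. By Proposition \ref{b}(i) both are idempotent $G$-maps, so corestricting to their images gives $G$-maps $\phi_0\colon B(H)\to\mathcal B_{\pi,u}$ and $\psi_0\colon B(K)\to\mathcal B_{\sigma,v}$, while the inclusions $\mathcal B_{\pi,u}\hookrightarrow B(H)$ and $\mathcal B_{\sigma,v}\hookrightarrow B(K)$ are u.c.i. $G$-maps for the inherited operator-system structures. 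For the forward implication, given a $G$-map $\Phi\colon\mathcal B_{\sigma,v}\to\mathcal B_{\pi,u}$, the composite $B(K)\xrightarrow{\psi_0}\mathcal B_{\sigma,v}\xrightarrow{\Phi}\mathcal B_{\pi,u}\hookrightarrow B(H)$ is a chain of u.c.p. $G$-equivariant maps, hence a $G$-map $B(K)\to B(H)$; for the converse, given a $G$-map $\Theta\colon B(K)\to B(H)$, the composite $\mathcal B_{\sigma,v}\hookrightarrow B(K)\xrightarrow{\Theta}B(H)\xrightarrow{\phi_0}\mathcal B_{\pi,u}$ is again a $G$-map. Since compositions of u.c.p. maps are u.c.p. and compositions of equivariant maps are equivariant, both directions are immediate once the two minimal idempotents are in hand.

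For the final assertion it then suffices to build a $G$-map $B(K)\to B(H)$ from a weak containment $(\pi,u,H)\preceq(\sigma,v,K)$. By definition this supplies the canonical $*$-homomorphism $q\colon A\rtimes_{\sigma,v}G\to A\rtimes_{\pi,u}G$ that is the identity on $A\rtimes_{\rm alg}G$. Viewing $A\rtimes_{\sigma,v}G\subseteq B(K)$ and $A\rtimes_{\pi,u}G\subseteq B(H)$ concretely, and recalling that the $G$-actions are ${\rm Ad}_v$ and ${\rm Ad}_u$, I would first check that $q$ is $G$-equivariant: on generators ${\rm Ad}_{u_g}(\pi(a)u_t)=\pi(\alpha_g(a))u_{gtg^{-1}}$ and likewise in $K$, and $q$ respects these because it fixes $A\rtimes_{\rm alg}G$. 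Thus $q$, followed by the inclusion into $B(H)$, is a u.c.p. $G$-map defined on the operator subsystem $A\rtimes_{\sigma,v}G$ of $B(K)$. Since $B(H)$ is an injective operator system, this map extends — ignoring equivariance for the moment — to a u.c.p. map $\Phi\colon B(K)\to B(H)$.

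The hard part will be that $\Phi$, obtained merely from injectivity of $B(H)$ (which is \emph{not} $G$-injective), need not be $G$-equivariant; recovering equivariance is the one genuinely nontrivial step, and it is exactly where the group unitaries do the work. For each $g$ the unitary $v_g$ lies in $A\rtimes_{\sigma,v}G$ (for instance $v_g=\sigma(1)v_g$ in the unital case), so $\Phi(v_g)=q(v_g)=u_g$ is a unitary of $B(H)$; as $\Phi$ is unital this forces $\Phi(v_g^*v_g)=1=\Phi(v_g)^*\Phi(v_g)$ and $\Phi(v_gv_g^*)=1=\Phi(v_g)\Phi(v_g)^*$, whence each $v_g$ lies in the multiplicative domain of $\Phi$ by \cite[Theorem 3.18]{p}. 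Therefore $\Phi(v_gxv_g^*)=\Phi(v_g)\Phi(x)\Phi(v_g^*)=u_g\Phi(x)u_g^*$ for every $x\in B(K)$, i.e. $\Phi\circ{\rm Ad}_{v_g}={\rm Ad}_{u_g}\circ\Phi$, so $\Phi$ is a bona fide $G$-map $B(K)\to B(H)$; the equivalence of the first part then produces the desired $G$-map $\mathcal B_{\sigma,v}\to\mathcal B_{\pi,u}$. I expect this multiplicative-domain argument to be the main (indeed essentially the only) obstacle, the remaining subtlety being the bookkeeping when $A$ is nonunital, where one instead locates the $v_g$ in the multiplier algebra and argues along an approximate unit.
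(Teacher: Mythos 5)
Your proposal is correct and takes essentially the same route as the paper: the equivalence comes from composing the minimal idempotents with the inclusions, and the weak-containment clause from Arveson-extending the canonical $*$-homomorphism $A\rtimes_{\sigma,v}G\to A\rtimes_{\pi,u}G$ (via injectivity of $B(H)$) to a u.c.p.\ map $B(K)\to B(H)$ whose $G$-equivariance is recovered by the multiplicative-domain theorem. The paper compresses your unitary-by-unitary argument into the single remark that $A\rtimes_{\sigma,v}G$ lies in the multiplicative domain of the extension, which is the same reasoning.
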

\begin{proof}
	Since there are $G$-maps $\mathcal{B}_{\pi,u}\to B(H) \to \mathcal{B}_{\pi,u}$ and $\mathcal{B}_{\sigma,v}\to B(K) \to \mathcal{B}_{\sigma,v}$, we get the first assertion. If $(\pi, u, H)\preceq (\sigma, v, K)$, then the canonical *-homomorphism: $ A\rtimes_{\sigma, v} G\to A\rtimes_{\pi, u} G$ extends to a u.c.p. map: $B(K) \to B(H)$, which   has to be $G$-equivariant, as $A\rtimes_{\sigma, v} G$ is in its multiplicative domain.
	
\end{proof}

The above lemma shows that the boundary $\mathcal{B}_{\pi,u}$ of a covariant representation $(\pi, u, H)$ is determined by the weak equivalence class of $(\pi, u, H)$, in other words, if covariant representations $(\pi, u, H)$ and $(\sigma, v, K)$ are weakly equivalent, then the $G$-maps between the corresponding boundaries are inverse of each other (by rigidity), and so the boundaries is isomorphic as $G$-C*-algebras.

\begin{proposition} \label{res}
	If $(\pi, u, H)$ is a covariant representation of $(G, A, \alpha)$, $\Lambda\leq G$ and $B$ is a  $G$-invariant C*-subalgebra of $A$, $u_0$ and $\alpha_0$ are the restrictions of $u$ and $\alpha$ on $\Lambda$ and $\pi_0$ is the restriction of $\pi$ on $B$, then
	
	$(i)$ $(\pi, u_0, H)$ and $(\pi_0, u, H)$ are  covariant representations of $(\Lambda, A, \alpha_0)$ and $(G, B, \alpha)$, respectively, and there are $\Lambda$-embedding: $\mathcal{B}_{\pi,u_0}\hookrightarrow \mathcal{B}_{\pi,u}$ and $G$-embedding:  $\mathcal{B}_{\pi_0,u}\hookrightarrow \mathcal{B}_{\pi,u}$,
	
	$(ii)$ if moreover $\Lambda\unlhd G$ and the action of $\Lambda$ on $\mathcal{B}_{\pi,u_0}$ is faithful, then  kernel of the action of $G$ on $\mathcal{B}_{\pi,u}$ lies in the  centralizer of $\Lambda$ in $G$. In particular, if a normal subgroup
	$\Lambda\unlhd G$ has trivial centralizer, the  action of $\Lambda$ on $\mathcal{B}_{\pi,u_0}$ is faithful iff the action
    $G$ on $\mathcal{B}_{\pi,u}$ is so,

    $(iii)$ if $(\pi, u_1, H_1)$ and $(\pi, u_2, H_2)$  are covariant representations, then there are $G$-maps: $\mathcal{B}_{\pi,u_i}\to\mathcal{B}_{\pi,u_1\otimes u_2}$, for $i = 1, 2$,

    $(iv)$ if $u\preceq {\rm ind}_{\Lambda}^G u_0$ and $u_0\preceq v_0\otimes w_0$, for some
    unitary representations $v_0,w_0$ of $\Lambda$, then for $v={\rm ind}_{\Lambda}^G v_0$ there is a $\Lambda$-map: $\mathcal{B}_{\pi,v}\to \mathcal{B}_{\pi,u}$, in particular, if $\Lambda$ is coamenable in $G$, then there is a $G$-isomorphism $\mathcal{B}_{\pi,u}\cong\mathcal{B}_{\pi,{\rm ind}_{\Lambda}^G(u_0)}$,

    $(v)$ if $(\pi_1, u, H)$ and $(\pi_2, u, H)$  are covariant representations so that the ranges of $\pi_1$ and $\pi_2$ commute, then $(\pi_1\times\pi_2, u, H)$ is a covariant representation for the diagonal action of $G$ on $A\otimes_{\rm max} A$  and there are $G$-maps: $\mathcal{B}_{\pi_i,u}\to\mathcal{B}_{\pi_1\times\pi_2,u}$, for $i = 1, 2$,

    $(vi)$ if $(\pi_1, u_1, H_1)$ and $(\pi_2, u_2, H_2)$  are covariant representations, then $(\pi_1\otimes\pi_2, u_1\otimes u_2, H_1\otimes H_2)$ is a covariant representation for the tensor action of $G$ on $A\otimes_{\rm min} A$  and there is a $G$-map: $\mathcal{B}_{\pi_1,u_1}\otimes_{{\rm min}}\mathcal{B}_{\pi_1,u_1}\to\mathcal{B}_{\pi_1\otimes\pi_2,u_1\otimes u_2}$,

    $(vii)$ if $J$ is a $G$-invariant closed ideal  of $A$ and the restriction $\pi_0$  of $\pi$ on $J$  is non degenerate, then there is a $G$-isomorphism $\mathcal{B}_{\pi,u}\cong\mathcal{B}_{{\rm ind}_{J}^A\pi_0,u}$.

\end{proposition}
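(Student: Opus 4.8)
The plan is to treat all seven parts through one structural observation: each boundary is a $G$-invariant operator subsystem of the ambient $B(H)$ equipped with a minimal $G$-projection, so whenever a source boundary is realized as a $G$-invariant subspace of the Hilbert-space algebra carrying the target boundary, restricting the target's minimal $G$-projection to it yields a $G$-map for free. The covariance assertions in (i), (v), (vi) are immediate by restricting, diagonalizing, or tensoring the identity $u_t\pi(a)u_t^*=\pi(\alpha_t(a))$, so I would dispose of them first. For (v) the system $\mathcal B_{\pi_i,u}\subseteq B(H)$ already sits $G$-invariantly inside $B(H)$, and for (vi) the subsystem $\mathcal B_{\pi_1,u_1}\otimes_{\min}\mathcal B_{\pi_2,u_2}\subseteq B(H_1)\otimes_{\min}B(H_2)\subseteq B(H_1\otimes H_2)$ is $G$-invariant for the tensor action; in both cases restricting the minimal $G$-projection of the target produces the required $G$-map. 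Part (iii) is the same idea once the Hilbert spaces are bridged: the unital $*$-homomorphisms $x\mapsto x\otimes 1$ and $x\mapsto 1\otimes x$ from $B(H_i)$ into $B(H_1\otimes H_2)$ are $G$-equivariant because $u_1\otimes u_2$ acts diagonally, so Lemma \ref{wc} applies verbatim.

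For (vii) I would observe that since $\pi_0=\pi|_J$ is non-degenerate it extends uniquely to $A$, so $\mathrm{ind}_J^A\pi_0$ is unitarily, hence weakly, equivalent to $\pi$; as the acting unitary $u$ and the fixed-point algebra $A^\alpha$ are unchanged, $(\pi,u,H)$ and $(\mathrm{ind}_J^A\pi_0,u,H)$ are weakly equivalent and the remark following Lemma \ref{wc} furnishes the $G$-isomorphism. The second (subalgebra) embedding in (i) is also clean: for a $G$-invariant subalgebra $B\subseteq A$ one has $B^\alpha\subseteq A^\alpha$, so the minimal $G$-projection $\phi_0$ onto $\mathcal B_{\pi,u}$ already fixes $\pi(B^\alpha)$; its restriction to $\mathcal B_{\pi_0,u}$ is then a $G$-map into $B(H)$ fixing $\pi(B^\alpha)$, hence completely isometric by the $u$-essentiality of Proposition \ref{b}$(ii)$, and its image lies in $\mathcal B_{\pi,u}$, giving the $G$-embedding $\mathcal B_{\pi_0,u}\hookrightarrow\mathcal B_{\pi,u}$.

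Part (iv) I would handle representation-theoretically: induction and tensoring preserve weak containment (Fell's continuity of these operations), so the hypotheses $u\preceq\mathrm{ind}_\Lambda^G u_0$ and $u_0\preceq v_0\otimes w_0$ chain, via the tensor identities for induced representations, into a weak containment relating $u$ to $v=\mathrm{ind}_\Lambda^G v_0$; Lemma \ref{wc} then yields the $\Lambda$-map $\mathcal B_{\pi,v}\to\mathcal B_{\pi,u}$, and coamenability of $\Lambda$ upgrades $u\preceq\mathrm{ind}_\Lambda^G u_0$ to a weak equivalence, whence the $G$-isomorphism by the remark after Lemma \ref{wc}. Part (ii) rests on the $\Lambda$-embedding of (i): if $g$ lies in the kernel of the $G$-action on $\mathcal B_{\pi,u}$ then $\mathrm{Ad}_{u_g}=\mathrm{id}$ there, so for $\lambda\in\Lambda$ normality gives $g\lambda g^{-1}\in\Lambda$ and $\mathrm{Ad}_{u_{g\lambda g^{-1}}}=\mathrm{Ad}_{u_g}\mathrm{Ad}_{u_\lambda}\mathrm{Ad}_{u_g}^{-1}=\mathrm{Ad}_{u_\lambda}$ on the $\Lambda$-invariant image of $\mathcal B_{\pi,u_0}$; faithfulness of the $\Lambda$-action on $\mathcal B_{\pi,u_0}$ forces $g\lambda g^{-1}=\lambda$, so $g\in C_G(\Lambda)$, and the stated equivalence in the case $C_G(\Lambda)=\{e\}$ follows.

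The main obstacle is the first (subgroup) embedding in (i). Passing from $G$ to $\Lambda\le G$ enlarges the fixed-point algebra from $A^\alpha$ to $A^{\alpha_0}\supseteq A^\alpha$, so $\mathcal B_{\pi,u_0}$ is built over $\pi(A^{\alpha_0})$, which in general is not contained in $\mathcal B_{\pi,u}$. Hence the restriction of the $G$-projection $\phi_0$ to $\mathcal B_{\pi,u_0}$, while still a $\Lambda$-map, need not fix $\pi(A^{\alpha_0})$, and the essentiality/rigidity shortcut that settled the subalgebra case is unavailable. My plan here is to first establish a restriction-of-injectivity lemma — that a $u$-injective subsystem of $B(H)$ remains $u_0$-injective relative to $B(H)$ for $\Lambda\le G$, which holds because $\mathcal B_{\pi,u}$ is the image of the $G$- (a fortiori $\Lambda$-) equivariant projection $\phi_0$ and the extension $\tilde\psi=\phi_0\circ\psi$ of Proposition \ref{b}$(v)$ remains $\Lambda$-equivariant — and then to combine the resulting $\Lambda$-projection $B(H)\to\mathcal B_{\pi,u}$ with the minimal $\Lambda$-projection onto $\mathcal B_{\pi,u_0}$, using $u_0$-minimality to locate $\mathcal B_{\pi,u_0}$ as a $\Lambda$-invariant corner. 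Reconciling the two fixed-point algebras $\pi(A^\alpha)\subseteq\pi(A^{\alpha_0})$ in this final identification is exactly where I expect the real work to lie.
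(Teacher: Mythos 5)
Your handling of the subalgebra embedding in (i), of (iii), (v), (vi), (vii), and of the first claim in (iv) is correct and is essentially the paper's own argument in each case: restrict a minimal $G$-projection onto the target boundary and invoke Proposition \ref{b}$(ii)$, or chain weak containments through Lemma \ref{wc} (note only that the chain in (iv) ends at $u\preceq v\otimes\mathrm{ind}_{\Lambda}^{G}(w_0)$, not at $u\preceq v$, so Lemma \ref{wc} must be combined with the tensor map of (iii); this is exactly how the paper proceeds).

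The first genuine gap is the one you flag yourself: the $\Lambda$-embedding $\mathcal{B}_{\pi,u_0}\hookrightarrow\mathcal{B}_{\pi,u}$ in (i) is never proved, and your proposed repair cannot close it as stated. What Proposition \ref{b}$(ii)$ requires, applied to the covariant representation $(\pi,u_0,H)$ of $(\Lambda,A,\alpha_0)$, is a $\Lambda$-map on $\mathcal{B}_{\pi,u_0}$ restricting to the identity on the \emph{larger} fixed-point algebra $\pi(A^{\alpha_0})$; the restriction of the minimal $G$-projection $\phi$ fixes only $\pi(A^{\alpha})$, and neither your restriction-of-injectivity lemma nor $u_0$-minimality produces a map fixing $\pi(A^{\alpha_0})$ — indeed $\mathcal{B}_{\pi,u}$ need not contain $\pi(A^{\alpha_0})$ at all (for $\Lambda=\{e\}$ and the Koopman pair $(M,\lambda^G,\ell^2(G))$ one has $\pi(A^{\alpha_0})=\ell^\infty(G)$ while $\mathcal{B}_{\pi,u}$ is a copy of $C(\partial_FG)$ inside $\ell^\infty(G)$). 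You should know, however, that your diagnosis identifies a defect of the paper itself: the paper's one-line proof restricts $\phi$ to $\mathcal{B}_{\pi,u_0}$ and cites Proposition \ref{b}$(ii)$ — precisely the unjustified step, since $\phi$ is only known to fix $\pi(A^{\alpha})\subseteq\pi(A^{\alpha_0})$. Both arguments are complete only when $A^{\alpha_0}=A^{\alpha}$ (e.g.\ the Bearden--Kalantar case $A=\mathbb{C}$), and part (ii), which in both treatments is deduced from (i), inherits this gap.

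The second gap is in the coamenable clause of (iv). Coamenability of $\Lambda$ does not upgrade $u\preceq\mathrm{ind}_{\Lambda}^{G}(u_0)$ to a weak equivalence: for $G=\mathbb{Z}$, $\Lambda=\{e\}$ (coamenable, as $\mathbb{Z}$ is amenable) and $u=1_{\mathbb{Z}}$ one has $\mathrm{ind}_{\Lambda}^{G}(u_0)=\lambda^{\mathbb{Z}}$, and $\lambda^{\mathbb{Z}}\not\preceq 1_{\mathbb{Z}}$, even though both boundaries are trivial and the conclusion of (iv) holds. The paper avoids this: coamenability gives $u\preceq u\otimes\lambda^{G/\Lambda}\cong\mathrm{ind}_{\Lambda}^{G}(u_0)$, hence by Lemma \ref{wc} a $G$-map $\mathcal{B}_{\pi,\mathrm{ind}_{\Lambda}^{G}(u_0)}\to\mathcal{B}_{\pi,u}$, while the identity $\mathrm{ind}_{\Lambda}^{G}(u_0)\cong u\otimes\lambda^{G/\Lambda}$ together with part (iii) gives a $G$-map in the reverse direction; $u$-rigidity (Proposition \ref{b}$(iv)$) then forces both composites to be the identity, yielding the isomorphism. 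Your conclusion stands, but only via this two-maps-plus-rigidity argument, not via a weak equivalence.
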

\begin{proof}
	(i) If $\phi: B(H) \to\mathcal{B}_{\pi,u}$ be a $G$-projection, now considered as a $\Lambda$-map, then by Proposition \ref{b}$(ii)$,  its restriction to $\mathcal{B}_{\pi,u_0}$ is a $\Lambda$-embedding. The same argument works for $\mathcal{B}_{\pi_0,u}$.
	
	(ii) By $(i)$, identify $\mathcal{B}_{\pi,u_0}$ with a $\Lambda$-invariant subspace of $\mathcal{B}_{\pi,u}$.
	If $g$ is in the kernel of the action of $G$ on $\mathcal{B}_{\pi,u}$ and $h\in\Lambda$, for $k:=h^g$, Ad$u_kx=$Ad$u_hx$, for $x\in \mathcal{B}_{\pi,u_0}$, thus $[g,h]$ is in the kernel of the action of $\Lambda$ on $\mathcal{B}_{\pi,u_0}$, which is trivial. Therefore, $g$ is in the  centralizer of $\Lambda$ in $G$. The last assertion now follows.
	
	(iii) If $(\pi_1, u_1, H_1)$ and $(\pi_2, u_2, H_2)$  are covariant representations, then compose the $G$-map: $B(H_1)\to   B(H_1)\bar\otimes B(H_2);\ a\mapsto a\otimes I$, with a $G$-map: $B(H_1)\bar\otimes B(H_2)\to \mathcal{B}_{\pi,u_1\otimes u_2}$, and then restrict to $\mathcal{B}_{\pi,u_1}$. Do the same for $u_2$.
	
	(iv) By \cite[Appendix E, F]{bhv},
	$$ u\preceq {\rm ind}_{\Lambda}^G(u_0)\preceq {\rm ind}_{\Lambda}^G(v_0\otimes w_0)\preceq {\rm ind}_{\Lambda}^G(v_0)\otimes {\rm ind}_{\Lambda}^G(w_0),$$	
	and the result follows from  (iii) and Lemma \ref{wc}.
	
	When $\Lambda$ is coamenable in $G$, for the left regular representation $\lambda^G$ we have
$$u\sim u\otimes 1_G\preceq u\otimes \lambda^G=u\otimes {\rm ind}_{\Lambda}^G(1_G)\preceq {\rm ind}_{\Lambda}^G(u_0\otimes 1_G)={\rm ind}_{\Lambda}^G(u_0).$$
Hence 	 there is a $G$-map $ \mathcal{B}_{\pi,{\rm ind}_{\Lambda}^G(u_0)}\to\mathcal{B}_{\pi,u}$. On the other hand, ${\rm ind}_{\Lambda}^G(u_0)=u\otimes\lambda^{G/\Lambda}$, where the last term is the Koompan representation of the homogeneous space $G/\Lambda$, thus there is a $G$-map $\mathcal{B}_{\pi,u}\to \mathcal{B}_{\pi,{\rm ind}_{\Lambda}^G(u_0)}$.
	Now by rigidity, both
	$G$-maps are isomorphisms.
	
	(v) If $(\pi_1, u, H)$ and $(\pi_2, u, H)$  are covariant representations so that the ranges of $\pi_1$ and $\pi_2$ commute, then $(\pi_1\times\pi_2, u, H)$ is a covariant representation for the diagonal action of $G$ on $A\otimes_{\rm max} A$ (c.f. \cite[Remark 2.27]{w}, \cite[3.3.7]{bo}). Restrict the $G$-map: $B(H)\to \mathcal{B}_{\pi_1\times \pi_2, u}$ to $\mathcal{B}_{\pi_1,u}$ and do the same for $\pi_2$.
	
	(vi) If $(\pi_1, u_1, H_1)$ and $(\pi_2, u_2, H_2)$  are covariant representations, then $(\pi_1\otimes\pi_2, u_1\otimes u_2 , H_1\otimes H_2)$ is a covariant representation for the diagonal action of $G$ on $A\otimes_{\rm min} A$ (c.f. \cite[Remark 2.27]{w}, \cite[3.2.4]{bo}). Since Ad$_{u_1\otimes u_2}=$Ad$_{u_1}\otimes$Ad$_{u_2}$, the canonical embedding $B(H_1)\otimes_{{\rm min}}B(H_2)\hookrightarrow
B(H_1\otimes H_2)$ is a $G$-map. Compose this with $G$-embedding: $\mathcal{B}_{\pi_1,u_1}\otimes_{{\rm min}}\mathcal{B}_{\pi_1,u_1}\hookrightarrow B(H_1)\otimes_{{\rm min}}B(H_2)$ (c.f., \cite[3.6.1]{bo}) and $G$-map: $B(H_1\otimes H_2)\to \mathcal{B}_{\pi_1\otimes\pi_2,u_1\otimes u_2}$. 	

(vii) Since $\pi_0$ is non degenerate, there is a unique representation on $A$ which extends $\pi_0$ \cite[2.10.4]{di}, and the result follows from Lemma \ref{wc}.

\end{proof}

The next result  relates the Furstenberg-Hamana boundary of induced representations to the classical Furstenberg boundary (defined in \cite{h2} and characterized in \cite[Theorem 3.11]{kk}).

\begin{proposition} \label{main1}
	Let   $(\sigma, H_\sigma)$ be a representation of a unital C*-algebra $A$ with induced representation Ind$(\sigma)=(\tilde\sigma, \lambda, \ell^2(G, H_\sigma))$. Then there is a $G$-embedding: $C(\partial_FG)\hookrightarrow\mathcal{B}_{{\rm Ind}(\sigma)}$ and a $G$-projection: $\mathcal{B}_{{\rm Ind}(\sigma)}\twoheadrightarrow C(\partial_FG)$. If $(\pi, u, H)\preceq{\rm Ind}(\sigma)$, for some representation $\sigma$ of $A$, then there is a $G$-map: $C(\partial_FG)\to\mathcal{B}_{\pi,u}$.
\end{proposition}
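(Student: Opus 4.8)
The plan is to deduce all three statements from two structural facts about the Furstenberg boundary: that $C(\partial_F G) = I_G(\mathbb C)$ is $G$-\emph{injective}, and that, being a $G$-injective envelope of $\mathbb C$, it is $G$-\emph{essential} over $\mathbb C$; the third assertion will then drop out of Lemma~\ref{wc}. First I would fix a concrete $G$-equivariant copy of $C(\partial_F G)$ inside $B(K)$, where $K=\ell^2(G)\otimes H_\sigma$. By Kalantar--Kennedy, $C(\partial_F G)=I_G(\mathbb C)$ sits as a $G$-operator subsystem of $\ell^\infty(G)$ (the image of Hamana's minimal $G$-projection on the $G$-injective system $\ell^\infty(G)$), with $G$ acting by left translation. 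Composing the diagonal inclusion $\ell^\infty(G)\hookrightarrow B(\ell^2(G))$, which intertwines left translation with $\mathrm{Ad}_{\lambda^G}$, with $T\mapsto T\otimes 1_{H_\sigma}\colon B(\ell^2(G))\to B(K)$, which intertwines $\mathrm{Ad}_{\lambda^G}$ with $\mathrm{Ad}_\lambda$ (recall $\lambda=\lambda^G\otimes 1$), yields a $G$-embedding $j\colon C(\partial_F G)\hookrightarrow B(K)$.

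For the embedding in the first assertion, let $\phi_0\colon B(K)\to\mathcal B_{\mathrm{Ind}(\sigma)}$ be the minimal $G$-projection of Lemma~\ref{min}, and set $\theta:=\phi_0\circ j\colon C(\partial_F G)\to\mathcal B_{\mathrm{Ind}(\sigma)}$, a unital $G$-map. The key point is that, by $G$-essentiality of $C(\partial_F G)$ over $\mathbb C$, \emph{any} unital $G$-map out of $C(\partial_F G)$ is automatically completely isometric: for the unital inclusion $\iota\colon\mathbb C\hookrightarrow C(\partial_F G)$, the composite $\theta\circ\iota$ sends $c\mapsto c\cdot 1$ and is completely isometric on $\mathbb C$, so essentiality forces $\theta$ to be completely isometric on all of $C(\partial_F G)$. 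Hence $\theta$ is the desired $G$-embedding $C(\partial_F G)\hookrightarrow\mathcal B_{\mathrm{Ind}(\sigma)}$.

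For the projection I would invoke $G$-injectivity of $C(\partial_F G)=I_G(\mathbb C)$. Applying it to the completely isometric $G$-embedding $\theta\colon C(\partial_F G)\hookrightarrow\mathcal B_{\mathrm{Ind}(\sigma)}$ and to the $G$-map $\mathrm{id}\colon C(\partial_F G)\to C(\partial_F G)$ produces a $G$-map $\Psi\colon\mathcal B_{\mathrm{Ind}(\sigma)}\to C(\partial_F G)$ with $\Psi\circ\theta=\mathrm{id}$; in particular $\Psi$ is surjective, giving the $G$-projection $\mathcal B_{\mathrm{Ind}(\sigma)}\twoheadrightarrow C(\partial_F G)$ and exhibiting $C(\partial_F G)$ as a $G$-retract of $\mathcal B_{\mathrm{Ind}(\sigma)}$. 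Finally, if $(\pi,u,H)\preceq\mathrm{Ind}(\sigma)$, then Lemma~\ref{wc} supplies a $G$-map $\mathcal B_{\mathrm{Ind}(\sigma)}\to\mathcal B_{\pi,u}$, and precomposing with $\theta$ yields the required $G$-map $C(\partial_F G)\to\mathcal B_{\pi,u}$.

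The main obstacle I anticipate is not in the two categorical extension steps, which are formal once the copy of $C(\partial_F G)$ is in place, but in pinning down that copy $G$-equivariantly: one must verify that Hamana's realization $I_G(\mathbb C)\subseteq\ell^\infty(G)$ survives the passage $\ell^\infty(G)\hookrightarrow B(\ell^2(G))\xrightarrow{\ \cdot\,\otimes 1\ }B(K)$ as a genuine $G$-embedding for $\mathrm{Ad}_\lambda$, and that $\theta=\phi_0\circ j$ is unital so that essentiality applies. The conceptual crux is the observation that essentiality of the Furstenberg boundary over $\mathbb C$ upgrades an arbitrary unital $G$-map into it to a complete isometry, which is exactly what turns the a priori lossy compression $\phi_0\circ j$ into an embedding.
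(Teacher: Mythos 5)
Your proposal is correct, and on two of the three assertions it coincides with the paper's own argument: the $G$-projection $\mathcal{B}_{{\rm Ind}(\sigma)}\twoheadrightarrow C(\partial_FG)$ is obtained from $G$-injectivity of $C(\partial_FG)=I_G(\mathbb C)$ applied to your embedding, and the final $G$-map $C(\partial_FG)\to\mathcal{B}_{\pi,u}$ is the composition through Lemma \ref{wc}, exactly as in the paper. The two proofs differ only in how the $G$-embedding $C(\partial_FG)\hookrightarrow\mathcal{B}_{{\rm Ind}(\sigma)}$ is produced. The paper restricts the coefficient algebra to the trivial subalgebra $\mathbb C\subseteq A$ and applies Proposition \ref{res}$(i)$ to get a $G$-embedding $\mathcal{B}_{tr,\lambda}\hookrightarrow\mathcal{B}_{\tilde\sigma,\lambda}$, then identifies $\mathcal{B}_{tr,\lambda}$ with $C(\partial_FG)$ by citing the Bearden--Kalantar computation $\mathcal B_{\lambda^G}=C(\partial_FG)$ of \cite[Example 3.7]{bk}; note that this chain of identifications also quietly uses that the Bearden--Kalantar boundary is unchanged under the amplification from $\lambda^G$ to $\lambda=\lambda^G\otimes 1_{H_\sigma}$ (weak equivalence plus Lemma \ref{wc}). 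You bypass both of these ingredients by exhibiting a concrete equivariant copy $j\bigl(C(\partial_FG)\bigr)\subseteq B\bigl(\ell^2(G,H_\sigma)\bigr)$ through Hamana's realization $I_G(\mathbb C)\subseteq\ell^\infty(G)$, multiplication operators on $\ell^2(G)$, and tensoring with $1_{H_\sigma}$, and you then upgrade the unital $G$-map $\phi_0\circ j$ to a complete isometry via $G$-essentiality of $I_G(\mathbb C)$ over $\mathbb C$. The mechanism is the same one that powers the paper's Proposition \ref{b}$(ii)$ (on which Proposition \ref{res}$(i)$ rests), so conceptually the routes agree; what yours buys is self-containedness --- no appeal to Proposition \ref{res}$(i)$, to \cite[Example 3.7]{bk}, or to amplification invariance --- at the price of invoking Kalantar--Kennedy's identification $C(\partial_FG)=I_G(\mathbb C)$ explicitly and verifying equivariance of the concrete inclusion, which you do correctly: the diagonal embedding $\ell^\infty(G)\hookrightarrow B(\ell^2(G))$ intertwines left translation with ${\rm Ad}_{\lambda^G}$, and $\lambda=\lambda^G\otimes 1_{H_\sigma}$.
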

\begin{proof}
	Consider the trivial C*-subalgebra $B:=\mathbb C\subseteq A$ and note that $\tilde\sigma$ restricts to the trivial representation $tr$ on $B$. Apply Proposition \ref{res}$(i)$, to get a $G$-embedding $$C(\partial_FG)=\mathcal B_{\lambda^G}=\mathcal{B}_{tr,\lambda}\hookrightarrow\mathcal{B}_{\tilde\sigma,\lambda}=\mathcal{B}_{{\rm Ind}(\sigma)},$$
	 where $\lambda^G$ is the left regular representation of $G$ and the first equality is shown in \cite[Example 3.7]{bk}. Now by $G$-injectivity of $C(\partial_FG)$, there is a $G$-projection: $\mathcal{B}_{{\rm Ind}(\sigma)}\twoheadrightarrow C(\partial_FG)$.
	
	 If $(\pi, u, H)\preceq$ Ind$(\sigma)$, by Lemma \ref{wc}, we have a
	$G$-map: $\mathcal{B}_{\tilde\sigma, \lambda}\to \mathcal{B}_{\pi, u}.$ This composed with the $G$-embedding: $C(\partial_FG)\hookrightarrow\mathcal{B}_{{\rm Ind}(\sigma)}$ gives the desired $G$-map: $C(\partial_FG)\to\mathcal{B}_{\pi,u}$.	
\end{proof}

\section{Amenability} \label{am}

In this section we investigate the relation between the Furstenberg-Hamana boundary $\mathcal{B}_{\pi,u}$ and injective crossed products.

\begin{lemma} \label{emb}
	Let	$(\pi, u, H)$  be a covariant representations of $ (G,A,\alpha)$, then there is a
	$G$-inclusion $\mathcal{B}_{\pi,u}\hookrightarrow I(A\rtimes_{\pi, u} G)$.
\end{lemma}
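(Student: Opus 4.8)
The plan is to realize the ordinary (non-equivariant) injective envelope $I(A\rtimes_{\pi,u}G)$ concretely inside $B(H)$ and then land $\mathcal B_{\pi,u}$ in it by invoking the $u$-essentiality of Proposition \ref{b}$(ii)$. Write $S:=A\rtimes_{\pi,u}G\subseteq B(H)$, the C*-algebra generated by $\pi(A)$ and $\{u_g:g\in G\}$. By Hamana's construction of the injective envelope (the non-equivariant analogue of the development in Lemma \ref{min} and Proposition \ref{b}), $I(S)$ is completely order isomorphic to the image of a minimal idempotent $\psi_0$ in the family $\mathfrak N$ of u.c.p. maps $\psi:B(H)\to B(H)$ with $\psi|_S=\mathrm{id}$; thus I may take $I(S)=\mathrm{Im}(\psi_0)\subseteq B(H)$.

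The key observation is that every $\psi\in\mathfrak N$ is automatically $\mathrm{Ad}_u$-equivariant. Indeed, each unitary $u_g$ lies in $S$, so $\psi(u_g)=u_g$ and $\psi(u_g^*u_g)=\psi(1)=1=\psi(u_g^*)\psi(u_g)$ (and likewise with $u_gu_g^*$), whence $u_g$ lies in the multiplicative domain of $\psi$. Consequently $\psi(u_g x u_g^*)=u_g\psi(x)u_g^*$ for every $x\in B(H)$, i.e. $\psi\circ\mathrm{Ad}_{u_g}=\mathrm{Ad}_{u_g}\circ\psi$. In particular $\psi_0$ commutes with $\mathrm{Ad}_u$, so $\mathrm{Im}(\psi_0)=I(S)$ is $\mathrm{Ad}_u$-invariant and the induced Choi--Effros action of $G$ on $I(S)$ is simply the restriction of $\mathrm{Ad}_u$. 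Moreover $\psi_0$ fixes $\pi(A^\alpha)\subseteq\pi(A)\subseteq S$ pointwise, so in fact $\psi_0\in\mathfrak M$.

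With this in hand, I restrict $\psi_0$ to the boundary. Since $\mathcal B_{\pi,u}=\mathrm{Im}(\phi_0)$ is a $G$-invariant operator subsystem of $B(H)$, the restriction
$$\Psi:=\psi_0|_{\mathcal B_{\pi,u}}:\ \mathcal B_{\pi,u}\longrightarrow I(S)$$
is a u.c.p. $\mathrm{Ad}_u$-equivariant map, that is, a $G$-map, and it is the identity on $\pi(A^\alpha)$ because $\psi_0$ is. Viewing $\Psi$ as a $G$-map $\mathcal B_{\pi,u}\to B(H)$ fixing $\pi(A^\alpha)$, Proposition \ref{b}$(ii)$, read at every matrix level, shows $\Psi$ is completely isometric. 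Hence $\Psi$ is a c.i. $G$-map, i.e. a $G$-inclusion $\mathcal B_{\pi,u}\hookrightarrow I(A\rtimes_{\pi,u}G)$, as required.

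The step I expect to be the main obstacle is the bookkeeping around the $G$-structure on $I(S)$: one must confirm that the $G$-action on the injective envelope furnished by Hamana's functoriality coincides with the concrete restriction of $\mathrm{Ad}_u$ to $\mathrm{Im}(\psi_0)$, and that this action is by $\ast$-automorphisms of the Choi--Effros product, which follows from $\mathrm{Ad}_{u_g}\circ\psi_0=\psi_0\circ\mathrm{Ad}_{u_g}$. A secondary point worth spelling out is that the essentiality in Proposition \ref{b}$(ii)$ is to be understood completely, so that $\Psi$ is a genuine complete isometry rather than merely isometric; this is precisely what upgrades $\Psi$ to a $G$-embedding.
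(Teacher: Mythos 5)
Your proposal is correct and follows essentially the same route as the paper: realize $I(A\rtimes_{\pi,u}G)$ inside $B(H)$ as the image of a u.c.p. idempotent fixing the crossed product, note that such a map is automatically $\mathrm{Ad}_u$-equivariant (the multiplicative-domain argument you spell out is exactly what justifies the paper's terse ``in particular, $\phi$ is $G$-equivariant''), and then restrict to $\mathcal{B}_{\pi,u}$ and invoke Proposition \ref{b}$(ii)$ to get a completely isometric $G$-map. Your explicit remarks on the multiplicative domain and on reading essentiality at all matrix levels only make precise steps the paper leaves implicit.
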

\begin{proof}
	By injectivity, there is a projection $\phi : B(H) \to  I(A\rtimes_{\pi, u} G)$, extending the
	identity map on $A\rtimes_{\pi, u} G$. In particular, $\phi$ is $G$-equivariant. The restriction of
	$\phi$ to $\mathcal{B}_{\pi,u}$ is a $G$-embedding by Proposition \ref{b}$(ii)$.

\end{proof}

Recall that an action of $G$ on a compact space $X$ is {\it topologically amenable}  if there is a net of continuous maps
$m_i : X \to$ Prob$(G)$ such that $\Vert gm_i(x) - m_i(gx)\Vert_1\to 0$, as $i\to\infty$, for every $g\in G$ \cite[Definition 2.1]{d}. Note that the group $G$ is exact if and only if it admits a topologically amenable action
on a compact space \cite{o}. Also for compact $G$-spaces $X$ and $Y$ if there is a continuous $G$-equivariant map:$ Y \to X$ and
$X$ is topologically amenable
then so is $Y$.

An action of $G$ on a unital C*-algebra $B$ is (strongly)
topologically amenable if  the action of $G$ on the Gelfand spectrum of the
center $Z(B)$ of $B$ is topologically amenable (c.f., \cite{d}). A related notion is defined in \cite[Definition 3.4]{bew2}: an action of $G$ on a  C*-algebra $B$ is strongly
 amenable if there is a net  of norm-continuous, compactly supported, positive type functions $\theta_i : G \to ZM(A)$ such that $\|\theta_i(e)\|\leq 1$
for all $i$ (where $e$ is the identity of $G$), and $\theta_i(g)\to 1$,  strictly and uniformly for $g$ in compact
subsets of $G$. The two notions are equivalent when $B$ is commutative (and unital) \cite[Theorem 5.13]{bew2}. Note that in \cite{bew2} the authors also define the notion of amenable action \cite[Definition 3.4]{bew2}, which is in general weaker than strong amenability.

 Following \cite{bk}, for a covariant representation $(\pi, u, H)$, we say that a boundary $\mathcal{B}_{\pi,u}$ is C*-embeddable if there is a *-homomorphic copy of
 $\mathcal{B}_{\pi,u}$ in $B(H)$. In this case, let $\tilde{\mathcal{B}}_{\pi,u}$ be the C*-algebra generated by $\mathcal{B}_{\pi,u}\cup u(G)$
 in $B(H)$. Note that in general, $\mathcal{B}_{\pi,u}$ is only a subspace of $B(H)$, made into a C*-algebra using the Choi-Effros product.

\begin{proposition} \label{ta}
	If $\mathcal{B}_{\pi,u}$ is unital and C*-embeddable and the action of $G$ on $\mathcal{B}_{\pi,u}$ is topologically amenable, then $G$ is an exact group and $(\pi_0, u, H)\preceq{\rm Ind}(\pi_0)$, where $\pi_0$ is the restriction of $\pi$ to the fixed point algebra $A^\alpha$.
\end{proposition}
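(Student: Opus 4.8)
The plan is to establish the two conclusions in turn. For exactness, note that since $\mathcal{B}_{\pi,u}$ is unital its center $Z(\mathcal{B}_{\pi,u})$ is a unital commutative C*-algebra, so $X:=\widehat{Z(\mathcal{B}_{\pi,u})}$ is a compact Hausdorff space on which $G$ acts through ${\rm Ad}_u$ (the center being $G$-invariant). By definition, topological amenability of the $G$-action on $\mathcal{B}_{\pi,u}$ is exactly topological amenability of $G\curvearrowright X$; thus $G$ admits a topologically amenable action on a compact space, and Ozawa's characterization \cite{o} gives that $G$ is exact.

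For the weak containment I would first use C*-embeddability to realize $\mathcal{B}_{\pi,u}$ as a genuine $G$-invariant C*-subalgebra of $B(H)$ with $G$-action ${\rm Ad}_u$, so that $({\rm id},u,H)$ is a covariant representation of $(G,\mathcal{B}_{\pi,u},{\rm Ad}_u)$ and ${\rm id}\rtimes u$ surjects $\mathcal{B}_{\pi,u}\rtimes G$ onto $\tilde{\mathcal{B}}_{\pi,u}$. The inclusion $\pi_0\colon A^\alpha\hookrightarrow\mathcal{B}_{\pi,u}$ is $G$-equivariant, because ${\rm Ad}_{u_g}\pi(a)=\pi(\alpha_g(a))=\pi(a)$ for $a\in A^\alpha$, so $G$ acts trivially on both sides. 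Functoriality of full and reduced crossed products then yields a naturality square relating $\pi_0\rtimes G$, the induced reduced map, and the canonical quotients $q_{A^\alpha}$ and $q_{\mathcal B}$.

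The decisive input is that the action $G\curvearrowright\mathcal{B}_{\pi,u}$, being strongly (hence, by \cite{bew2}, genuinely) amenable, has the weak containment property, so $q_{\mathcal B}\colon\mathcal{B}_{\pi,u}\rtimes G\to\mathcal{B}_{\pi,u}\rtimes_{\rm red}G$ is an isomorphism. Substituting into the naturality square forces $\pi_0\rtimes G$ to factor through $q_{A^\alpha}$; composing with ${\rm id}\rtimes u$ and using $({\rm id}\rtimes u)\circ(\pi_0\rtimes G)=\pi_0\rtimes u$, I conclude that $\pi_0\rtimes u$ factors through $A^\alpha\rtimes_{\rm red}G$. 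Because the action on $A^\alpha$ is trivial, $A^\alpha\rtimes_{\rm red}G\cong A^\alpha\otimes_{\min}C^*_r(G)$ and $A^\alpha\rtimes_{{\rm Ind}(\pi_0)}G\cong\pi_0(A^\alpha)\otimes_{\min}C^*_r(G)$; by exactness of $G$ the kernel of the quotient between them is $\ker\pi_0\otimes_{\min}C^*_r(G)$, which $\pi_0\rtimes u$ plainly annihilates. Hence $\pi_0\rtimes u$ descends all the way to $A^\alpha\rtimes_{{\rm Ind}(\pi_0)}G$, which is precisely the assertion $(\pi_0,u,H)\preceq{\rm Ind}(\pi_0)$.

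The main obstacle I anticipate is upgrading the hypothesis, which is phrased only through $\widehat{Z(\mathcal{B}_{\pi,u})}$, to the equality of full and reduced crossed products of $\mathcal{B}_{\pi,u}$ itself: this requires invoking the Buss--Echterhoff--Willett theory to pass from central (strong) amenability to amenability of $G\curvearrowright\mathcal{B}_{\pi,u}$ and then to the weak containment property. A secondary point is ensuring that the C*-embedding of the boundary can be taken $G$-equivariant, so that $({\rm id},u,H)$ is honestly a covariant representation; and the concluding descent to ${\rm Ind}(\pi_0)$ genuinely consumes the exactness of $G$, so that the two halves of the statement are logically intertwined rather than independent.
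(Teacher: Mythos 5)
Your proof is correct, and its skeleton (exactness via the centre, then max $=$ red for $\mathcal{B}_{\pi,u}\rtimes G$, then C*-embeddability to restrict down to the fixed-point algebra) is the same as the paper's; the two places where you diverge are worth recording. First, for the weak-containment property of $G\curvearrowright\mathcal{B}_{\pi,u}$ the paper does not pass through the Buss--Echterhoff--Willett chain (topological $\Rightarrow$ strong $\Rightarrow$ amenable $\Rightarrow$ weak containment) that you single out as your main obstacle: since topological amenability of the action on $\mathcal{B}_{\pi,u}$ is \emph{defined} as amenability of $G\curvearrowright X=\widehat{Z(\mathcal{B}_{\pi,u})}$, and $\mathcal{B}_{\pi,u}$ is then a $G$-$C(X)$-algebra, Anantharaman-Delaroche's theorem \cite[Theorem 5.3]{d} gives $\mathcal{B}_{\pi,u}\rtimes_{\rm max}G\cong\mathcal{B}_{\pi,u}\rtimes_{\rm red}G$ in one step; your BEW route is legitimate but heavier. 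Second, your endgame genuinely consumes exactness of $G$ (to identify the kernel of $\pi_0\otimes\mathrm{id}$ on $A^\alpha\otimes_{\rm min}C^*_r(G)$), whereas the paper's does not: it restricts the *-homomorphism $\mathcal{B}_{\pi,u}\rtimes_{\rm red}G\to\tilde{\mathcal{B}}_{\pi,u}$ to the subalgebra $\pi(A^\alpha)\rtimes_{\rm red}G$, and since the $G$-action on $\pi(A^\alpha)$ is trivial and the identity representation of the image algebra is faithful, $\pi(A^\alpha)\rtimes_{\rm red}G$ is literally $A^\alpha\rtimes_{{\rm Ind}(\pi_0)}G$, so no kernel computation is needed. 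Thus in the paper the two conclusions of the proposition are logically independent, contrary to your closing remark; the exactness step in your argument is an artifact of running the naturality square over $A^\alpha$ rather than over its image $\pi(A^\alpha)$, and it disappears if you make that substitution. Finally, both arguments share the implicit assumption you rightly flag: that the *-homomorphic copy of $\mathcal{B}_{\pi,u}$ is the $G$-invariant subspace $\mathcal{B}_{\pi,u}\subseteq B(H)$ itself, so that the inclusion together with $u$ is honestly a covariant pair.
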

\begin{proof}	
If the action of $G$ on $\mathcal{B}_{\pi,u}$ is topologically amenable, then, so is its  restriction the center $Z(\mathcal{B}_{\pi,u})$, and so $G$ is  exact \cite[Theorem 7.2]{d}. Also, if $Z(\mathcal{B}_{\pi,u})=C(X)$, then $\mathcal{B}_{\pi,u}$ is a $G$-$C(X)$-algebra and $\mathcal{B}_{\pi,u}\rtimes_{\rm max} G\cong \mathcal{B}_{\pi,u}\rtimes_{\rm red} G$ by \cite[Theorem 5.3]{d}. Now since $\mathcal{B}_{\pi,u}$ is C*-embeddable,
there is a *-homomorphism: $\mathcal{B}_{\pi,u}\rtimes_{\rm red} G\to \tilde{\mathcal{B}}_{\pi,u}$, canonically mapping $\pi(A^\alpha)\rtimes_{\rm red} G=A^\alpha\rtimes_{\tilde\pi_0,\lambda} G$ onto $A^\alpha\rtimes_{\pi_0,u} G$. Thus $(\pi_0, u, H)\preceq{\rm Ind}(\pi_0)$.

\end{proof}

We also have the following characterization of strong amenability of the $G$-action on $\mathcal{B}_{\pi,u}$, which extends \cite[Theorem
1.1]{kk} and follows immediately by \cite[Theorem 7.5]{bew2}.

\begin{proposition} \label{ta2}
The following are equivalent:
	
	$(i)$ G is exact,
	
	$(ii)$ the action of $G$ on $\mathcal{B}_{\pi,u}$ is strongly  amenable,
	
	$(iii)$ the action of $G$ on any $G$-injective C*-algebra $B$ is strongly amenable.
	
\end{proposition}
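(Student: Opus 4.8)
The plan is to obtain the three‑way equivalence by reducing it to the dichotomy for genuinely $G$‑injective C*‑algebras recorded in \cite[Theorem 7.5]{bew2}, and then inserting the boundary $\mathcal{B}_{\pi,u}$ as a distinguished instance. I would first dispose of $(i)\Leftrightarrow(iii)$ by invoking \cite[Theorem 7.5]{bew2} directly: for a discrete group, exactness is equivalent to strong amenability of the $G$‑action on every $G$‑injective C*‑algebra. Since statement $(iii)$ is precisely this assertion, no further argument is required here, and this is the sense in which the proposition "follows immediately."

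For $(ii)\Rightarrow(i)$ I would argue directly, independently of $(iii)$. As $\mathcal{B}_{\pi,u}$ is unital (it contains ${\rm id}_H$), its center is $Z(\mathcal{B}_{\pi,u})=C(X)$ for the compact $G$‑space $X=\widehat{Z(\mathcal{B}_{\pi,u})}$, and the net of positive‑type functions $\theta_i:G\to Z(\mathcal{B}_{\pi,u})$ witnessing strong amenability is exactly the data expressing topological amenability of the $G$‑action on $X$. Thus the action on the compact $G$‑space $X$ is topologically amenable, whence $G$ is exact by \cite[Theorem 7.2]{d} (equivalently \cite{o}). This is the same mechanism already used in the proof of Proposition \ref{ta}.

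The remaining and only substantive step is $(iii)\Rightarrow(ii)$, which I would obtain by showing that $\mathcal{B}_{\pi,u}$ is itself a $G$‑injective C*‑algebra, so that $(ii)$ is merely the case $B=\mathcal{B}_{\pi,u}$ of $(iii)$. The hard part will be upgrading the relative ($u$‑)injectivity supplied by Proposition \ref{b}$(v)$—extension of equivariant maps only against $G$‑invariant operator subsystems of the fixed $B(H)$—to absolute $G$‑injectivity in the sense of \cite[Definition 2.1]{bew}, which demands extending equivariant ccp maps out of arbitrary $G$‑C*‑algebras. I would attempt this by factoring a general equivariant embedding $A\hookrightarrow C$ of $G$‑C*‑algebras through a faithful covariant representation and then applying the $G$‑projection $\phi_0$ together with $u$‑injectivity; alternatively, it suffices to prove that the commutative corner $Z(\mathcal{B}_{\pi,u})$ is $G$‑injective, since strong amenability is a statement about the center and \cite[Theorem 7.5]{bew2} then applies to the commutative $G$‑injective algebra $Z(\mathcal{B}_{\pi,u})$ once exactness is granted. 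A tempting fallback is to use the $G$‑embedding $\mathcal{B}_{\pi,u}\hookrightarrow I(A\rtimes_{\pi,u}G)$ of Lemma \ref{emb} into a bona fide injective envelope and transport strong amenability along it, but since amenability of actions does not in general descend to invariant subalgebras, controlling that transfer is exactly where the real difficulty concentrates.
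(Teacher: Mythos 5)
Your treatment of $(i)\Leftrightarrow(iii)$ coincides with everything the paper actually does: the paper gives no proof beyond the citation of \cite[Theorem 7.5]{bew2}, so there is nothing more to compare there. Your direct argument for $(ii)\Rightarrow(i)$ is also correct: the witnessing functions $\theta_i$ take values in $Z(\mathcal{B}_{\pi,u})$, positivity of matrices over $Z(\mathcal{B}_{\pi,u})$ is the same whether tested in $M_n(Z(\mathcal{B}_{\pi,u}))$ or $M_n(\mathcal{B}_{\pi,u})$, so the same net shows the restricted action on the center is strongly amenable; then \cite[Theorem 5.13]{bew2} converts this to topological amenability of the action on the compact spectrum, and exactness follows by \cite[Theorem 7.2]{d}, exactly as in the proof of Proposition \ref{ta}.

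The gap you flag at $(iii)\Rightarrow(ii)$, however, is not merely a missing verification: it cannot be closed at this level of generality, because the relative ($u$-)injectivity of Proposition \ref{b}$(v)$ genuinely does not upgrade to $G$-injectivity in the sense of \cite[Definition 2.1]{bew}, and the implication $(i)\Rightarrow(ii)$ is in fact false for arbitrary covariant representations. Take $G=\mathbb{F}_2$, $A=\mathbb{C}$ with the trivial action, $H=\mathbb{C}$ and $u$ the trivial representation. Then $B(H)=\mathbb{C}$, $\mathfrak{M}=\{\mathrm{id}\}$, and $\mathcal{B}_{\pi,u}=\mathbb{C}$ with the trivial $G$-action; by \cite[Theorem 5.13]{bew2} (the spectrum being a point), strong amenability of this action is amenability of $\mathbb{F}_2$, which fails, while $(i)$ and $(iii)$ hold since $\mathbb{F}_2$ is exact. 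So none of your fallback routes (Lemma \ref{emb}, factoring through faithful representations, passing to the center) can succeed without an extra hypothesis, and your instinct that ``this is where the real difficulty concentrates'' is exactly right. What does repair the statement is the assumption that there exists a $G$-projection $P$ of $B(H)$ onto a $G$-injective $G$-subalgebra $N$ containing $\pi(A^\alpha)$; this holds for $(\pi,u,H)={\rm Ind}(\sigma)$ with $N=\ell^\infty(G,B(H_\sigma))$ by \cite[Lemma 2.2]{h2}. Indeed, $\phi_0\circ P$ then lies in $\mathfrak{M}$ and maps $\mathcal{B}_{\pi,u}$ into itself fixing $\pi(A^\alpha)$, so $u$-rigidity (Proposition \ref{b}$(iv)$) gives $(\phi_0\circ P)|_{\mathcal{B}_{\pi,u}}=\mathrm{id}$; given a $G$-embedding $X\subseteq Y$ and a $G$-map $\psi\colon X\to\mathcal{B}_{\pi,u}$, extend $P\circ\psi$ to $\Psi\colon Y\to N$ by $G$-injectivity of $N$, and then $\phi_0\circ\Psi$ extends $\psi$. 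Hence in that setting $\mathcal{B}_{\pi,u}$ is $G$-injective and \cite[Theorem 7.5]{bew2} applies verbatim. Since the paper's own ``proof'' is the bare citation, your proposal, by isolating precisely where absolute $G$-injectivity of the boundary is needed, exposes a hypothesis that the proposition as printed is missing (restricting, say, to induced covariant representations, in line with the cited \cite[Theorem 1.1]{kk}).
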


Following \cite{bk}, for a normal subgroup $\Lambda\unlhd G$ and a unitary representation $(u, H)$ of $G$, we say that $\Lambda$ is $(\pi, u)$-amenable if there is a $G$-map $\phi: B(H)\to u(\Lambda)^{'}$, such that $\phi=$id on $\pi(A^\alpha)$, where the commutant is taken inside $B(H)$ and is known to be $G$-invariant. Note that we always have $\pi(A^\alpha)\subseteq u(G)^{'}\subseteq u(\Lambda)^{'}$. The whole group $G$ is $u$-amenable iff $u$ is an amenable representation iff all normal subgroups of $G$ are $u$-amenable \cite[Proposition 4.2]{bk}. Also a normal
subgroup $\Lambda\unlhd G$ is $\lambda^G$-amenable iff it is amenable \cite[Proposition 4.3]{bk}. We have the following extensions of these results.

\begin{proposition} \label{a}
	Let	$(\pi, u, H)$  be a covariant representations of $ (G,A,\alpha)$, then the following are
	equivalent:
	
	(i) $(\pi, u, H)$ is an amenable covariant representation,
	
	(ii) any normal subgroup $\Lambda\unlhd G$ is $(\pi, u)$-amenable and there is a conditional expectation: $u(\Lambda)^{'}\to \pi(A^\alpha)$,
	
	(iii) $G$ is $(\pi, u)$-amenable and there is a conditional expectation: $u(G)^{'}\to \pi(A^\alpha)$.
	
\end{proposition}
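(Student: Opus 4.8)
The plan is to establish the cyclic chain of implications $(i)\Rightarrow(ii)\Rightarrow(iii)\Rightarrow(i)$. The single observation that drives everything is that, since $G$ acts on $B(H)$ by $\mathrm{Ad}_u$ and every $x\in u(G)'$ commutes with each $u_g$, we have $u_g x u_g^*=u_g u_g^* x=x$; that is, the $G$-action is \emph{trivial} on the commutant $u(G)'$, and a fortiori on $\pi(A^\alpha)\subseteq u(G)'$ (which is consistent with the covariance rule, as $\alpha_g=\mathrm{id}$ on $A^\alpha$). Consequently any u.c.p. map whose domain and range both lie inside $u(G)'$ is automatically $G$-equivariant, and it is this fact that will let us upgrade a bare conditional expectation into a $G$-map.

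For $(i)\Rightarrow(ii)$, I would start from a $G$-projection $\Psi:B(H)\to\pi(A^\alpha)$ supplied by amenability. Given any $\Lambda\unlhd G$, composing $\Psi$ with the inclusion $\pi(A^\alpha)\hookrightarrow u(\Lambda)'$ (valid since $\pi(A^\alpha)\subseteq u(G)'\subseteq u(\Lambda)'$) yields a $G$-map $B(H)\to u(\Lambda)'$ that is the identity on $\pi(A^\alpha)$, so $\Lambda$ is $(\pi,u)$-amenable. Restricting $\Psi$ to the von Neumann algebra $u(\Lambda)'$ then gives a u.c.p. idempotent of $u(\Lambda)'$ onto its C*-subalgebra $\pi(A^\alpha)$, which is a conditional expectation by Tomiyama's theorem. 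The implication $(ii)\Rightarrow(iii)$ is merely the special case $\Lambda=G$.

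The substantive step is $(iii)\Rightarrow(i)$. Let $\phi:B(H)\to u(G)'$ be the $G$-map witnessing $(\pi,u)$-amenability of $G$ (so $\phi=\mathrm{id}$ on $\pi(A^\alpha)$), and let $E:u(G)'\to\pi(A^\alpha)$ be the given conditional expectation. I would set $\Psi:=E\circ\phi:B(H)\to\pi(A^\alpha)$. It is u.c.p. as a composition of u.c.p. maps; for $a\in\pi(A^\alpha)$ one has $\phi(a)=a$ and $E(a)=a$, so $\Psi$ restricts to the identity on $\pi(A^\alpha)$, whence $\Psi$ is idempotent onto $\pi(A^\alpha)$. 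Finally $\phi$ is $G$-equivariant by hypothesis, while $E$ is $G$-equivariant for free by the opening observation, since both $u(G)'$ and $\pi(A^\alpha)$ carry the trivial $G$-action; hence $\Psi$ is a $G$-map. Thus $\Psi$ is a $G$-projection $B(H)\to\pi(A^\alpha)$, i.e. $(\pi,u,H)$ is amenable.

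The only point that genuinely requires care is precisely the equivariance of $E$: a priori a conditional expectation $u(G)'\to\pi(A^\alpha)$ need not respect the $G$-action, and without the extra conditional-expectation hypothesis in $(iii)$ one could not compose it with $\phi$ to land back in the C*-algebra $\pi(A^\alpha)$. It is exactly the triviality of $\mathrm{Ad}_u$ on the commutant $u(G)'$ that removes this obstacle and explains why, in contrast to the Bearden--Kalantar setting where $\pi(A^\alpha)=\mathbb{C}\,\mathrm{id}_H$ and such an expectation is a state that always exists, the conditional-expectation clause must be inserted here.
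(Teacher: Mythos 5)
Your proposal is correct and follows essentially the same route as the paper's proof: the same cycle $(i)\Rightarrow(ii)\Rightarrow(iii)\Rightarrow(i)$, with $(i)\Rightarrow(ii)$ obtained by composing the $G$-projection with the inclusion $\pi(A^\alpha)\hookrightarrow u(\Lambda)'$ and restricting it to $u(\Lambda)'$, and $(iii)\Rightarrow(i)$ obtained by forming $E\circ\phi$ and noting that $E$ is automatically $G$-equivariant because $\mathrm{Ad}_u$ acts trivially on $u(G)'$ and on $\pi(A^\alpha)$. Your explicit justification of the triviality of the action on the commutant and the appeal to Tomiyama's theorem merely spell out details the paper leaves implicit.
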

\begin{proof}
	$(i)\Rightarrow (ii)$. If $\Lambda$  is normal subgroup and $\phi : B(H) \to \pi(A^\alpha)$ is  a $G$-projection, then composing $\phi$ with the inclusion $\iota: \pi(A^\alpha)\hookrightarrow u(\Lambda)^{'}$ we get a $G$-map $\iota\circ\phi : B(H) \to u(\Lambda)^{'}$ and restricting $\phi$  to $u(\Lambda)^{'}$ we get a conditional expectation $\phi_0 : B(H) \to \pi(A^\alpha)$.
	
	$(ii)\Rightarrow (iii)$. Trivial.
	
	$(iii)\Rightarrow (i)$. Let $\phi : B(H) \to u(G)^{'}$ be a $G$-map such that $\phi=$id on $\pi(A^\alpha)$, and compose $\phi$ with a conditional expectation $\tau: u(G)^{'}\to \pi(A^\alpha)$, which is also $G$-equivariant, as $G$ acts trivially on both domain and range, then  $\tau\circ\phi : B(H) \to \pi(A^\alpha)$ is  a $G$-projection.
		
\end{proof}

\begin{theorem} \label{ame}
	Let	$\sigma$  be a representations of $A$, then for a normal subgroup $\Lambda\unlhd G$ the following are
	equivalent:
	
	(i)  $\Lambda$ is ${\rm Ind}(\sigma)$-amenable,
	
	(ii) $\Lambda$ is amenable.
	
\end{theorem}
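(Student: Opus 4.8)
The plan is to peel off the coefficient space $H_\sigma$ and reduce both implications to the purely unitary statement \cite[Proposition 4.3]{bk}, that a normal subgroup is $\lambda^G$-amenable if and only if it is amenable. Write ${\rm Ind}(\sigma)=(\tilde\sigma,\lambda,K)$ with $K=\ell^2(G,H_\sigma)=\ell^2(G)\otimes H_\sigma$ and $\lambda=\lambda^G\otimes 1_{H_\sigma}$. For $a\in A^\alpha$ one has $\tilde\sigma(a)\xi(t)=\sigma(\alpha_{t^{-1}}(a))\xi(t)=\sigma(a)\xi(t)$, so $\tilde\sigma(A^\alpha)=1\otimes\sigma(A^\alpha)$; in particular $\tilde\sigma(A^\alpha)$ commutes with $\lambda(\Lambda)$ and is fixed by the $G$-action ${\rm Ad}_\lambda={\rm Ad}_{\lambda^G}\otimes{\rm id}$, which is trivial on the $B(H_\sigma)$ leg. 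Finally $\lambda(\Lambda)'=\lambda^G(\Lambda)'\,\bar\otimes\,B(H_\sigma)$. These three observations are the bridge between the covariant picture and the unitary one.

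For $(i)\Rightarrow(ii)$ I would compress. The unital normal $*$-homomorphism $j\colon B(\ell^2 G)\to B(K)$, $j(y)=y\otimes 1$, is a $G$-embedding, and slicing the second leg by a vector state $\omega_\eta$ (with $\eta\in H_\sigma$ a unit vector) gives a $G$-equivariant normal u.c.p. map ${\rm id}\,\bar\otimes\,\omega_\eta\colon \lambda(\Lambda)'\to\lambda^G(\Lambda)'$, equivariant precisely because $G$ acts trivially on the sliced factor. Given a $G$-map $\phi\colon B(K)\to\lambda(\Lambda)'$ with $\phi={\rm id}$ on $\tilde\sigma(A^\alpha)$ witnessing ${\rm Ind}(\sigma)$-amenability of $\Lambda$, the composite $({\rm id}\,\bar\otimes\,\omega_\eta)\circ\phi\circ j\colon B(\ell^2 G)\to\lambda^G(\Lambda)'$ is a unital $G$-map, so $\Lambda$ is $\lambda^G$-amenable and hence amenable by \cite[Proposition 4.3]{bk}. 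Note that this direction does not even use the condition on $\tilde\sigma(A^\alpha)$.

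For $(ii)\Rightarrow(i)$, amenability of $\Lambda$ yields, by \cite[Proposition 4.3]{bk}, a $G$-equivariant u.c.p. projection onto $\lambda^G(\Lambda)'$, and I would want to promote it to a map into $\lambda(\Lambda)'$ that keeps $\tilde\sigma(A^\alpha)=1\otimes\sigma(A^\alpha)$ pointwise fixed. Slicing the $H_\sigma$-leg off destroys $\tilde\sigma(A^\alpha)$ (it sends $1\otimes b$ to the scalar $\omega_\eta(b)1$), so the second leg must be retained. I would therefore run the construction of \cite[Proposition 4.3]{bk} directly on $B(K)$ for the representation $\lambda=\lambda^G\otimes 1_{H_\sigma}$, whose restriction to $\Lambda$ is a multiple of, hence quasi-equivalent to, the regular representation of $\Lambda$; that construction depends only on this feature. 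The output is a $G$-map $\Psi\colon B(K)\to\lambda(\Lambda)'$, and since $\tilde\sigma(A^\alpha)$ commutes with $\lambda(\Lambda)$ and is $G$-fixed it is automatically fixed by $\Psi$, so $\Lambda$ is ${\rm Ind}(\sigma)$-amenable.

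The main obstacle is the $G$-equivariance of the projection onto the commutant in $(ii)\Rightarrow(i)$. The natural candidates, the averages $\frac1{|F_n|}\sum_{s\in F_n}{\rm Ad}_{\lambda_s}$ over a F{\o}lner net $(F_n)$ of $\Lambda$, have range in $\lambda(\Lambda)'$ and fix $\tilde\sigma(A^\alpha)$ in the limit, but a direct computation shows they are ${\rm Ad}_{\lambda_g}$-equivariant only when the limiting mean on $\Lambda$ is invariant under conjugation by $G$, which amenability of $\Lambda$ alone does not supply since the conjugation action of $G$ on $\Lambda$ may itself be non-amenable. This is exactly the point already settled in \cite[Proposition 4.3]{bk}, and reusing it is what makes the reduction work. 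The only genuinely new bookkeeping is to carry the construction through the inert factor $H_\sigma$ rather than slice it away, so that $\tilde\sigma(A^\alpha)$ is preserved; because the abstract u.c.p. projection need not be normal, one cannot simply tensor a pre-built map on $B(\ell^2G)$ with ${\rm id}_{B(H_\sigma)}$ and must re-run the argument on $B(K)$ instead.
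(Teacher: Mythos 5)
Your reduction scheme is mostly sound, and your structural observations ($\tilde\sigma(A^\alpha)=1\otimes\sigma(A^\alpha)$ and $\lambda(\Lambda)'=\lambda^G(\Lambda)'\,\bar\otimes\,B(H_\sigma)$) are correct. The direction $(i)\Rightarrow(ii)$ works as you wrote it; the paper gets it even more cheaply, by composing $\phi$ with an \emph{arbitrary} state of $\lambda(\Lambda)'$ (such a state is automatically $\Lambda$-invariant, since $\Lambda$ acts trivially on its own commutant) and restricting to $\ell^\infty(G)$, which yields a $\Lambda$-invariant mean directly, without invoking \cite[Proposition 4.3]{bk}. Your instinct for $(ii)\Rightarrow(i)$ is also exactly the paper's: keep the $H_\sigma$-leg and run the commutative argument with $B(H_\sigma)$ coefficients, i.e.\ compose a $G$-projection $P\colon B(\ell^2(G,H_\sigma))\to\ell^\infty(G,B(H_\sigma))$ (which exists because $\ell^\infty(G,B(H_\sigma))$ is $G$-injective, by \cite[Lemma 2.2]{h2}) with the coset-averaging map $E\colon\ell^\infty(G,B(H_\sigma))\to\ell^\infty(G/\Lambda,B(H_\sigma))\subseteq\lambda(\Lambda)'$ built from an invariant mean on $\Lambda$.

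The genuine gap is your last step: the claim that $\tilde\sigma(A^\alpha)$ is ``automatically'' fixed by $\Psi$ because it commutes with $\lambda(\Lambda)$ and is $G$-fixed. That principle is false, and nothing in your justification refers to the actual construction of $\Psi$. The $G$-fixed part of $\lambda(\Lambda)'$ is all of $\lambda(G)'=\lambda^G(G)'\,\bar\otimes\,B(H_\sigma)$, and the map $\Psi=E\circ P$ above has range inside $\ell^\infty(G/\Lambda,B(H_\sigma))$, so it cannot fix, for instance, $\rho_g\otimes 1$ (with $\rho$ the right regular representation and $g\neq e$): this operator commutes with $\lambda(\Lambda)$ and is $G$-fixed, yet it is not a diagonal operator, hence not in the range of $\Psi$. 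Worse, since for \emph{every} covariant representation one has $\pi(A^\alpha)\subseteq u(G)'\subseteq u(\Lambda)'$ and $\pi(A^\alpha)$ is always fixed by ${\rm Ad}_u$, your principle would make the requirement ``$\phi={\rm id}$ on $\pi(A^\alpha)$'' hold vacuously for every $G$-map into $u(\Lambda)'$, collapsing the paper's notion of $(\pi,u)$-amenability into Bearden--Kalantar's $u$-amenability and rendering the covariant refinement pointless. The correct (and short) verification uses the specific form of $\Psi$: the projection $P$ is the identity on its range $\ell^\infty(G,B(H_\sigma))$, the averaging $E$ is the identity on functions constant on $G$, and $\tilde\sigma(A^\alpha)=1\otimes\sigma(A^\alpha)$ consists precisely of such constant functions, so $\Psi$ fixes $\tilde\sigma(A^\alpha)$ pointwise. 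With that sentence in place of the ``automatic'' claim, your argument coincides with the paper's proof.
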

\begin{proof}
	$(ii)\Rightarrow (i)$. Let $\lambda^G$ be the left regular representation of $G$ and ${\rm Ind}(\sigma) =(\tilde\sigma, \lambda, \ell^2(G, H_\sigma))$ be the induced representation. If $\Lambda$ is amenable, there is a $G$-map: $\ell^\infty(G) \to \ell^\infty(G/\Lambda)$, which lifts to a $G$-map: $\ell^\infty(G, B(H_\sigma)) \to \ell^\infty(G/\Lambda, B(H_\sigma))$, as $G$ acts trivially on $B(H_\sigma)$.
Also there is a  $G$-projection: $ B(\ell^2(G, H_\sigma)) \to \ell^\infty(G, B(H_\sigma))$, as $B(H_\sigma)$ is injective, and so $\ell^\infty(G, B(H_\sigma))$ is $G$-injective \cite[Lemma 2.2]{h2}. Composing these maps we get a $G$-map: $$ B(\ell^2(G, H_\sigma)) \to \ell^\infty(G/\Lambda, B(H_\sigma))\subseteq \lambda(\Lambda)^{'},$$ where the last commutant is inside $ B(\ell^2(G, H_\sigma))$.

$(i)\Rightarrow (ii)$. Conversely, suppose $\Lambda$ is ${\rm Ind}(\sigma)$-amenable, and $\phi: B(\ell^2(G, H_\sigma)) \to \lambda(\Lambda)^{'}$ is a
$G$-map. Composing this with  any state of $\lambda(\Lambda)^{'}$ to get a $G$-map: $B(\ell^2(G, H_\sigma))\to \mathbb C$, with range having a trivial $G$-action. Restricting this to $\ell^\infty(G)$ gives a $\Lambda$-invariant map: $\ell^\infty(G)\to \mathbb C$, showing that $\Lambda$  is amenable.
\end{proof}

\begin{lemma} \label{t}
	Let	$(\pi, u, H)$  be a covariant representations of $ (G,A,\alpha)$, then a normal subgroup $\Lambda\unlhd G$ acts trivially on  $\mathcal{B}_{\pi,u}$ iff it is
	$u$-amenable.
\end{lemma}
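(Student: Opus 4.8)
The plan is to reduce the equivalence to the single inclusion $\mathcal{B}_{\pi,u}\subseteq u(\Lambda)'$, and then to extract the nontrivial direction from $u$-rigidity of the boundary (Proposition \ref{b}$(iv)$). Throughout, I would fix a minimal idempotent $\phi_0\in\mathfrak M$ with ${\rm Im}(\phi_0)=\mathcal{B}_{\pi,u}$, and recall that $G$ acts on $\mathcal{B}_{\pi,u}$ by ${\rm Ad}_{u_g}$, as in the proof of Proposition \ref{trivial2}. For $h\in\Lambda$ and $x\in\mathcal{B}_{\pi,u}$ one has ${\rm Ad}_{u_h}(x)=x$ precisely when $u_hx=xu_h$; hence $\Lambda$ acts trivially on $\mathcal{B}_{\pi,u}$ if and only if $\mathcal{B}_{\pi,u}\subseteq u(\Lambda)'$. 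Since $\Lambda\unlhd G$, the commutant $u(\Lambda)'$ is a $G$-invariant operator system, so it is a legitimate target for a $G$-map.

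For the forward implication, if $\Lambda$ acts trivially then ${\rm Im}(\phi_0)=\mathcal{B}_{\pi,u}\subseteq u(\Lambda)'$, so $\phi_0$ corestricts to a $G$-map $B(H)\to u(\Lambda)'$ that is the identity on $\pi(A^\alpha)$; this is exactly a witness of $(\pi,u)$-amenability, which is the meaning of ``$u$-amenable'' here.

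The reverse implication carries the content. Assume $\Lambda$ is $(\pi,u)$-amenable, witnessed by a $G$-map $\phi:B(H)\to u(\Lambda)'$ with $\phi={\rm id}$ on $\pi(A^\alpha)$. I would form $\phi_0\circ\phi$ and restrict it to $\mathcal{B}_{\pi,u}$: this is a $G$-map $\mathcal{B}_{\pi,u}\to\mathcal{B}_{\pi,u}$ which is the identity on $\pi(A^\alpha)$, so $u$-rigidity forces $\phi_0(\phi(x))=x$ for all $x\in\mathcal{B}_{\pi,u}$. Now for $h\in\Lambda$, the element $\phi(x)$ lies in $u(\Lambda)'$ and is thus fixed by ${\rm Ad}_{u_h}$; combining this with the $G$-equivariance of $\phi_0$ yields
\[
{\rm Ad}_{u_h}(x)={\rm Ad}_{u_h}\big(\phi_0(\phi(x))\big)=\phi_0\big({\rm Ad}_{u_h}(\phi(x))\big)=\phi_0(\phi(x))=x,
\]
so that $\Lambda$ acts trivially on $\mathcal{B}_{\pi,u}$.

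I expect no serious obstacle: the only points needing care are the identification of the $G$-action with ${\rm Ad}_{u_h}$ and the check that $\phi_0\circ\phi$ restricts to a genuine $G$-map of $\mathcal{B}_{\pi,u}$ into itself, so that Proposition \ref{b}$(iv)$ applies; everything else is the short equivariance computation above. Conceptually this is the covariant analogue of the corresponding Bearden--Kalantar statement, with $\pi(A^\alpha)$ playing the role of the scalar multiples of the identity.
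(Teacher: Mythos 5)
Your proposal is correct and follows essentially the same route as the paper: the ``trivial action $\Rightarrow$ amenable'' direction is identical (corestrict the minimal $G$-projection $\phi_0$ to $u(\Lambda)'$), and for the converse you reach the same conclusion as the paper with only a cosmetic variation --- the paper restricts the witnessing $G$-map $\phi$ to $\mathcal{B}_{\pi,u}$ and invokes essentiality (Proposition \ref{b}$(ii)$) to get an injective equivariant map into $u(\Lambda)'$, on which $\Lambda$ acts trivially, whereas you apply rigidity (Proposition \ref{b}$(iv)$) to $\phi_0\circ\phi$ to get the retraction identity $\phi_0(\phi(x))=x$ and then run the same equivariance computation. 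Since rigidity is itself a consequence of essentiality, these are two phrasings of one argument, and both are complete.
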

\begin{proof}
If $\Lambda$  is $u$-amenable and $\phi : B(H) \to u(\Lambda)^{'}$ is  a $G$-map, by Proposition \ref{b}$(ii)$, the restriction of $\phi$ on $\mathcal{B}_{\pi,u}$ is a $G$-embedding. But $\Lambda$  acts trivially on $u(\Lambda)^{'}$, and so on $\mathcal{B}_{\pi,u}$.

Conversely if $\Lambda$  acts trivially on  $\mathcal{B}_{\pi,u}$, then there is a  $G$-embedding: $\mathcal{B}_{\pi,u}\hookrightarrow u(G)^{'}$. Composing this with any $G$-projection: $B(H) \to \mathcal{B}_{\pi,u}$, we get a $G$-map: $B(H)\to u(G)^{'}$.	
		
\end{proof}

In particular, the kernel of the action  of $G$ on $\mathcal{B}_{\pi,u}$ is the unique maximal $u$-amenable
normal subgroup of $G$, which contains all $u$-amenable normal subgroups of $G$. This is called the $(\pi, u)$-amenable radical of
$G$, and is denoted by Rad$_{\pi,u}(G)$ (c.f., \cite[Definition 4.6]{bk}).

Let $V$ be a $G$-operator system. Then the $G$-injective envelope $I_G(V)$ is characterized as a unique maximal
$G$-essential (or minimal $G$-injective) $G$-extension of $V$.
If $I(V)$ is the injective envelope of $V$ as an
operator system, since Aut$V\subseteq$ Aut$I(V)$, one may regard $I(V)$ as a $G$-extension of $V$. Indeed,  $I(V)$
is a $G$-essential $G$-extension of $V$ and $V\subseteq I(V)\subseteq I_G(V)$ as $G$-spaces. Moreover,  $I(V)$ is unique among the $G$-subspaces  of $I_G(V)$ which are the injective envelope of $V$ \cite[Remark 2.6]{h2}.

Recall that a normal subgroup $\Lambda\unlhd G$ is called {\it coamenable} if there is a $G$-state on  $\ell^\infty(G/\Lambda)$. Let	$(\pi, u, H)$  be a covariant representations of $ (G,A,\alpha)$. A normal subgroup $\Lambda\unlhd G$ is called $(\pi,u)$-{\it coamenable} if there
exists a $G$-map: $u(\Lambda)^{'} \to I(\pi(A^\alpha))$. Here $I(\pi(A^\alpha))$ is the injective envelop of $\pi(A^\alpha)$ which is defined in the unital case as usual, and in non-unital case by passing to the minimal unitization (or directly, and observing that the algebra and its minimal unitization have the same injective envelop; c.f., \cite{bp}).

\begin{theorem} \label{t2}
	Let	$(\pi, u, H)$  be a covariant representations of $ (G,A,\alpha)$, then every coamenable
	normal subgroup  $\Lambda\unlhd G$ is $(\pi,u)$-coamenable. The converse is also true when $(\pi, u, H)={\rm Ind}(\sigma)$, for a representation $\sigma$ of $A$.
\end{theorem}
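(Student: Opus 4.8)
The plan is to prove the two implications separately, exploiting the characterization of coamenability via $G$-states on $\ell^\infty(G/\Lambda)$ and the behavior of the fixed-point algebra $\pi(A^\alpha)$ and its injective envelope. Throughout I will work inside $B(H)$ and use that the commutant $u(\Lambda)'$ is a $G$-invariant von Neumann subalgebra containing $\pi(A^\alpha)$ (since $\pi(A^\alpha)\subseteq u(G)'\subseteq u(\Lambda)'$), together with the fact that $I(\pi(A^\alpha))$ sits as a $G$-subspace of $I_G(\pi(A^\alpha))$ and is itself an injective (hence $G$-essential) $G$-extension of $\pi(A^\alpha)$.

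\emph{Coamenable $\Rightarrow$ $(\pi,u)$-coamenable.} Suppose $\Lambda\unlhd G$ is coamenable, so there is a $G$-state $m$ on $\ell^\infty(G/\Lambda)$. The first step is to manufacture a $G$-map from $u(\Lambda)'$ into $I(\pi(A^\alpha))$. I would proceed by exploiting that $I(\pi(A^\alpha))$, being injective as an operator system, gives rise via Hamana's lemma (\cite[Lemma 2.2]{h2}) to the $G$-injective operator system $\ell^\infty(G,I(\pi(A^\alpha)))$, and more to the point, the $G$-state $m$ on $\ell^\infty(G/\Lambda)$ can be used to produce an averaging-type $G$-projection. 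Concretely, since $\Lambda$ acts trivially on $u(\Lambda)'$, one forms a Poisson-type map using $m$ to average the $\Lambda$-action and land in the $\Lambda$-fixed part; then one composes with a $G$-projection onto $I(\pi(A^\alpha))$ supplied by injectivity of $I(\pi(A^\alpha))$ applied to the inclusion $\pi(A^\alpha)\hookrightarrow u(\Lambda)'$. The point is that coamenability furnishes exactly the invariant averaging needed to promote the (automatically existing) u.c.p. extension into a $G$-equivariant one. This yields the desired $G$-map $u(\Lambda)'\to I(\pi(A^\alpha))$, i.e. $\Lambda$ is $(\pi,u)$-coamenable.

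\emph{Converse for induced representations.} When $(\pi,u,H)={\rm Ind}(\sigma)=(\tilde\sigma,\lambda,\ell^2(G,H_\sigma))$, I want to recover a $G$-state on $\ell^\infty(G/\Lambda)$ from a $(\pi,u)$-coamenable witness $\phi\colon \lambda(\Lambda)'\to I(\pi(A^\alpha))$. The key structural fact is that for the induced representation the commutant $\lambda(\Lambda)'$ contains a copy of $\ell^\infty(G/\Lambda)$ (realized as the $\Lambda$-invariant diagonal multiplication operators on $\ell^2(G,H_\sigma)$), and that $I(\pi(A^\alpha))$ carries, via composition with any state, a map back to $\mathbb C$ with trivial $G$-action. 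The plan is therefore to restrict $\phi$ to this copy of $\ell^\infty(G/\Lambda)$, then compose with a state of $I(\pi(A^\alpha))$ to obtain a $G$-invariant functional $\ell^\infty(G/\Lambda)\to\mathbb C$, which is precisely a $G$-state witnessing coamenability of $\Lambda$. Here I would use the same mechanism as in the proof of Theorem~\ref{ame}: restricting an equivariant map and composing with a state collapses the range to $\mathbb C$ while preserving invariance.

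\emph{Main obstacle.} The delicate point is the \emph{first} direction: producing a genuinely $G$-equivariant map $u(\Lambda)'\to I(\pi(A^\alpha))$ rather than a merely $\Lambda$-equivariant or u.c.p.\ one. Injectivity of $I(\pi(A^\alpha))$ only gives a u.c.p.\ extension, and one must use the coamenability datum $m$ correctly to average over $G/\Lambda$ so that the resulting map intertwines the full $G$-action. The subtlety is that $G$ does not act trivially on $u(\Lambda)'$ (only $\Lambda$ does), so the averaging must be performed against the $G$-action on the $G/\Lambda$-variable, and one must verify that the target $I(\pi(A^\alpha))$ is preserved and that the $G$-essentiality/rigidity of $I(\pi(A^\alpha))$ inside $I_G(\pi(A^\alpha))$ is not disturbed. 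I expect this equivariance check—threading the $G$-state $m$ through Hamana's injective-envelope machinery while keeping the map valued in $I(\pi(A^\alpha))$—to be the crux of the argument; once it is in place, both the forward construction and the converse restriction-and-state argument are routine.
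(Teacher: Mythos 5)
Your overall structure is sound and both implications go through. On the forward direction you are essentially reproducing the paper's argument: the paper applies Hamana's lemma (\cite[Lemma 2.2]{h2}) to the quotient group to see that $\ell^\infty(G/\Lambda, I(\pi(A^\alpha)))$ is injective in the equivariant category, obtains from this a $G$-map $u(\Lambda)'\to \ell^\infty(G/\Lambda, I(\pi(A^\alpha)))$ (your Poisson-type map; it is well defined on cosets precisely because $\Lambda$, being normal, acts trivially on the $G$-invariant algebra $u(\Lambda)'$), and then slices with $\psi\otimes\mathrm{id}$, where $\psi$ is the coamenability state --- exactly your averaging of a non-equivariant u.c.p.\ extension against $m$. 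Two slips in your phrasing are self-corrected in your ``main obstacle'' paragraph: injectivity of $I(\pi(A^\alpha))$ yields only a u.c.p., not a $G$-equivariant, extension, and the averaging is over the $G/\Lambda$-variable rather than ``of the $\Lambda$-action'' (elements of $u(\Lambda)'$ are already $\Lambda$-fixed). One fact you use tacitly should be made explicit: $G$ acts \emph{trivially} on $I(\pi(A^\alpha))$, because it acts trivially on $\pi(A^\alpha)$ and, by rigidity of the injective envelope, the only u.c.p.\ extension of the identity of $\pi(A^\alpha)$ to $I(\pi(A^\alpha))$ is the identity. This triviality is what makes your averaged map $G$-equivariant with values in $I(\pi(A^\alpha))$.

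For the converse your route is genuinely different from, and in fact more robust than, the paper's. Both proofs begin by restricting the coamenability witness $\lambda(\Lambda)'\to I(\sigma(A^\alpha))$ to the copy of $\ell^\infty(G/\Lambda)$ inside $\lambda(\Lambda)'$ (the diagonal multiplication operators constant on cosets), producing a $G$-map $\theta:\ell^\infty(G/\Lambda)\to I(\sigma(A^\alpha))$. You then finish in one stroke: since $G$ acts trivially on $I(\sigma(A^\alpha))$ (in this realization it acts trivially on all of $B(H_\sigma)$), composing $\theta$ with \emph{any} state gives a $G$-invariant state on $\ell^\infty(G/\Lambda)$, i.e.\ coamenability. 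The paper instead pushes $\theta$ further into $\mathcal{B}_{\tilde\sigma,\lambda}$, projects onto $C(\partial_FG)$ via Proposition \ref{main1}, and then invokes a $G$-invariant Radon measure on $\partial_FG$ to reach $\mathbb{C}$; but boundary actions are strongly proximal, so for non-amenable $G$ no such invariant measure exists, and that last step does not produce the desired state. Your shortcut bypasses this problematic detour entirely; the only thing missing is the one-line rigidity argument for triviality of the $G$-action on the injective envelope, after which your proof is complete.
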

\begin{proof}
Consider the canonical $G/\Lambda$-action on $u(G)^{'}$, since   $\ell^\infty(G/\Lambda, I(\pi(A^\alpha)))$ is $G$-injective \cite[Lemma 2.2]{h2}, there is a $(G/\Lambda)$-map: $B(H)\to \ell^\infty(G/\Lambda, I(\pi(A^\alpha)))$, whose restriction clearly gives a $G$-map $\phi: u(G)^{'}\to \ell^\infty(G/\Lambda, I(\pi(A^\alpha)))$. By coamenability of $\Lambda$, there is a $G$-map $\psi : \ell^\infty(G/\Lambda)\to\mathbb C$. Tensoring with the identity map on $I(\pi(A^\alpha))$, we get a $G$-map $\psi\otimes {\rm id} : \ell^\infty(G/\Lambda, I(\pi(A^\alpha)))\to I(\pi(A^\alpha))$. Thus we have a $G$-map $(\psi\otimes {\rm id})\circ\phi : u(G)^{'}\to I(\pi(A^\alpha))$, as required.

Conversely, if $(\pi, u, H)={\rm Ind}(\sigma)$, for a representation $\sigma$ of $A$,	then identifying $\ell^\infty(G/\Lambda, I(\sigma(A^\alpha)))$ with the subalgebra of elements in $\ell^\infty(G, I(\sigma(A^\alpha)))$ which are constant on cosets of $\Lambda$, we may regard  $\ell^\infty(G/\Lambda, I(\sigma(A^\alpha)))$ as a $G$-invariant
C*-subalgebra of $B(\ell^2(G, H_\sigma))$. Under this identification,
the fact that $\Lambda$ acts trivially on $\ell^\infty(G/\Lambda, I(\sigma(A^\alpha)))$ implies that $\ell^\infty(G/\Lambda, I(\sigma(A^\alpha)))$  is mapped by $\phi$ into $u(G)^{'}$. Now if  $\Lambda$ is $(\tilde\sigma,\lambda)$-coamenable, there
is a $G$-map $\psi: \lambda(\Lambda)^{'} \to I(\sigma(A^\alpha))$. Composing these maps we get a $G$-map
$\psi\circ\phi: \ell^\infty(G/\Lambda, I(\sigma(A^\alpha))) \to I(\sigma(A^\alpha))$. Since $G$ acts trivially on $B(H_\sigma)$, the restriction of the above map yields a $G$-map
$\theta: \ell^\infty(G/\Lambda) \to I(\sigma(A^\alpha))$. On the other hand, the restriction of the $G$-projection: $B(H)=B(\ell^2(G,H_\sigma))\to \mathcal{B}_{\tilde\sigma,\lambda}$ to $I(\sigma(A^\alpha))$ is a $G$-map $\beta: I(\sigma(A^\alpha))\to \mathcal{B}_{\tilde\sigma,\lambda}$, which gives us the $G$-map $\theta\circ\beta: \ell^\infty(G/\Lambda) \to \mathcal{B}_{\tilde\sigma,\lambda}$. Now by Proposition \ref{main1}, there is a $G$-projection: $\mathcal{B}_{\tilde\sigma,\lambda}\to C(\partial_FG)$. Composing  $\theta\circ\beta$ with this map gives a $G$-map $\gamma: \ell^\infty(G/\Lambda) \to C(\partial_FG)$. Finally, $G$ acts on $\partial_FG$ and each $G$-invariant Radon measure $\mu$ on $\partial_FG$ gives a $G$-invariant state $\varepsilon: C(\partial_FG)\to \mathbb C$. Finally, $\varepsilon\circ \gamma$ is a $G$-state on  $\ell^\infty(G/\Lambda)$, implying that $\Lambda$ is coamenable. 	
\end{proof}

\begin{corollary} \label{coa}
	A
	normal subgroup  $\Lambda\unlhd G$ is coamenable iff it is ${\rm Ind}(\sigma)$-coamenable, for some (any) representation $\sigma$ of $A$.
\end{corollary}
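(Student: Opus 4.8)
The plan is to read the corollary off directly from Theorem \ref{t2}, which already packages both implications once we restrict attention to induced representations.

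For the forward direction, I would begin by assuming that $\Lambda$ is coamenable. The first assertion of Theorem \ref{t2} states that every coamenable normal subgroup is $(\pi,u)$-coamenable for an \emph{arbitrary} covariant representation $(\pi, u, H)$. Specializing this to $(\pi, u, H) = {\rm Ind}(\sigma)$ for any representation $\sigma$ of $A$ immediately gives that $\Lambda$ is ${\rm Ind}(\sigma)$-coamenable; since $\sigma$ was arbitrary, this already establishes the ``any'' form of the statement.

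For the converse, I would assume that $\Lambda$ is ${\rm Ind}(\sigma)$-coamenable for \emph{some} representation $\sigma$ of $A$. The converse half of Theorem \ref{t2}, which is proved precisely in the case $(\pi, u, H) = {\rm Ind}(\sigma)$, then yields that $\Lambda$ is coamenable.

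Finally, I would reconcile the ``some (any)'' parenthetical by combining the two directions: if $\Lambda$ is ${\rm Ind}(\sigma)$-coamenable for some $\sigma$, the converse direction makes it coamenable, and then the forward direction upgrades this to ${\rm Ind}(\tau)$-coamenability for \emph{every} representation $\tau$. Hence coamenability, ${\rm Ind}(\sigma)$-coamenability for some $\sigma$, and ${\rm Ind}(\sigma)$-coamenability for all $\sigma$ all coincide. I do not anticipate a genuine obstacle here; the only point requiring care is to invoke the correct half of Theorem \ref{t2} in each direction, remembering that the forward half applies to every covariant representation while the converse half is special to induced ones.
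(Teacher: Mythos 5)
Your proposal is correct and is exactly the argument the paper intends: the corollary is stated without proof precisely because it follows by reading off the two halves of Theorem \ref{t2} in the way you describe (the general forward implication specialized to ${\rm Ind}(\sigma)$, and the converse half which is proved exactly for induced representations). Your closing remark reconciling ``some'' with ``any'' by cycling through coamenability is the right bookkeeping and matches the paper's meaning of the parenthetical.
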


If $(\pi, u, H)$  be a covariant representation such that $u$ restricts to an irreducible representation on $\Lambda$, then $\Lambda$ is $(\pi, u)$-coamenable, since in this case, $u(\Lambda)^{'}=\mathbb C{\rm id}_H$, and  $I(\pi(A^\alpha))$ is always unital (even if $\pi(A^\alpha)$ is not unital).

  \begin{proposition} \label{amcoam}
  Let	$(\pi, u, H)$  be a covariant representations of $ (G,A,\alpha)$. If $u$ is amenable, then each
  	normal subgroup  $\Lambda\unlhd G$ is both $u$-amenable and $(\pi, u)$-coamenable. Conversely, if there is a normal subgroup  $\Lambda\unlhd G$ which is both $u$-amenable and $(\pi, u)$-coamenable and there is a $G$-invariant state on $I(\pi(A^\alpha))$, then $u$ is amenable.
  \end{proposition}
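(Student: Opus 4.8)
The plan is to realize both implications as diagram chases along the chain
\[
B(H)\xrightarrow{\ \phi\ } u(\Lambda)'\xrightarrow{\ \psi\ } I(\pi(A^\alpha))\xrightarrow{\ \varepsilon\ }\mathbb C,
\]
exploiting that amenability of $u$ is, by definition, nothing but the existence of an $\mathrm{Ad}_u$-invariant state (that is, a $G$-map to the trivial $G$-operator system $\mathbb C$) on $B(H)$, while $(\pi,u)$-amenability and $(\pi,u)$-coamenability of $\Lambda$ supply respectively the first and second arrows above. Throughout I use that $u(\Lambda)'$ is a $G$-invariant operator system, which holds because $\Lambda\unlhd G$ forces each $\mathrm{Ad}_{u_g}$ to preserve $u(\Lambda)'$, and that $\mathbb C1$ consists of $G$-fixed points of $I(\pi(A^\alpha))$ since the $G$-action there is unital.

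For the forward direction, assume $u$ is amenable. The $(\pi,u)$-amenability of every normal subgroup $\Lambda$ is then immediate from the equivalence recorded before the statement (the adaptation of \cite[Proposition 4.2]{bk}), by which amenability of $u$ forces all normal subgroups to be $(\pi,u)$-amenable. For $(\pi,u)$-coamenability I would start from an $\mathrm{Ad}_u$-invariant state $m$ on $B(H)$ furnished by amenability, restrict it to the $G$-invariant subalgebra $u(\Lambda)'$ to obtain a $G$-invariant state $m|_{u(\Lambda)'}:u(\Lambda)'\to\mathbb C$, and then post-compose with the unital inclusion $\mathbb C\hookrightarrow I(\pi(A^\alpha))$, $c\mapsto c1$. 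Since the unit of $I(\pi(A^\alpha))$ is $G$-fixed this inclusion is a $G$-map, so the composite $u(\Lambda)'\to I(\pi(A^\alpha))$ is the required $G$-map witnessing $(\pi,u)$-coamenability.

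For the converse, suppose $\Lambda\unlhd G$ is simultaneously $(\pi,u)$-amenable and $(\pi,u)$-coamenable and that $I(\pi(A^\alpha))$ carries a $G$-invariant state $\varepsilon$. I would simply splice the three resulting $G$-maps: $(\pi,u)$-amenability gives a $G$-map $\phi:B(H)\to u(\Lambda)'$, $(\pi,u)$-coamenability gives a $G$-map $\psi:u(\Lambda)'\to I(\pi(A^\alpha))$, and $\varepsilon:I(\pi(A^\alpha))\to\mathbb C$ is a $G$-map onto the trivial $G$-operator system $\mathbb C$. Their composite $\varepsilon\circ\psi\circ\phi:B(H)\to\mathbb C$ is u.c.p.\ and $G$-equivariant, hence an $\mathrm{Ad}_u$-invariant state on $B(H)$; by definition this says exactly that $u$ is amenable. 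Note that the identity condition ``$\phi=\mathrm{id}$ on $\pi(A^\alpha)$'' in the definition of $(\pi,u)$-amenability is not used here: only the existence of $\phi$ as a $G$-map into $u(\Lambda)'$ is needed.

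The individual steps are routine, so the only genuine care needed—and what I regard as the crux—is the equivariance bookkeeping: verifying that $u(\Lambda)'$ is preserved by every $\mathrm{Ad}_{u_g}$ using normality of $\Lambda$ (so that $\phi$ and $\psi$ make sense as $G$-maps), that the restriction of an invariant state to a $G$-invariant subspace remains a $G$-map, and that $\mathbb C\hookrightarrow I(\pi(A^\alpha))$ is equivariant because it lands in $G$-fixed points. The conceptual heart is the observation that $(\pi,u)$-amenability and $(\pi,u)$-coamenability furnish arrows \emph{into} and \emph{out of} the common object $u(\Lambda)'$, so that an invariant state on $I(\pi(A^\alpha))$ is precisely the missing ingredient needed to transport amenability back up to $B(H)$; the forward $(\pi,u)$-amenability claim is the one point that leans on an external result rather than on a direct composition.
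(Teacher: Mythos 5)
Your proof is correct, and in the forward direction it coincides with the paper's own argument: the same appeal to \cite[Proposition 4.2]{bk} for $u$-amenability of every normal subgroup, and the same restriction-of-the-invariant-state-plus-unital-inclusion $\mathbb C\hookrightarrow I(\pi(A^\alpha))$ argument for coamenability. The converse, however, takes a genuinely different and more economical route. The paper never composes the map $\phi:B(H)\to u(\Lambda)'$ coming from $u$-amenability with the coamenability map; instead it invokes Lemma \ref{t} to conclude that $\Lambda$ acts trivially on $\mathcal{B}_{\pi,u}$, hence $\mathcal{B}_{\pi,u}\subseteq u(\Lambda)'$, restricts the coamenability map to $\mathcal{B}_{\pi,u}$, precomposes with a $G$-projection $B(H)\to\mathcal{B}_{\pi,u}$, and only then applies the invariant state on $I(\pi(A^\alpha))$. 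Both routes end with a $G$-map $B(H)\to I(\pi(A^\alpha))$ followed by the invariant state, but yours is shorter and bypasses the boundary machinery entirely. Your closing remark that the condition ``$\phi=\mathrm{id}$ on $\pi(A^\alpha)$'' is never used is a real gain, not a cosmetic one: the paper's detour through Lemma \ref{t} genuinely requires that condition, since the proof of Lemma \ref{t} rests on Proposition \ref{b}$(ii)$, which applies only to $G$-maps restricting to the identity on $\pi(A^\alpha)$. As the paper wavers between ``$u$-amenable'' (mere existence of a $G$-map $B(H)\to u(\Lambda)'$, as in \cite{bk}) and the stronger ``$(\pi,u)$-amenable'', your composition proves the converse under the weaker reading, whereas the paper's proof needs the stronger one; the one external ingredient you rely on, the citation for the forward $u$-amenability claim, is exactly the ingredient the paper relies on as well.
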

\begin{proof}
If $u$ is amenable, then any normal subgroup is $u$-amenable \cite[Proposition 4.2]{bk}. Also if $\omega$ is a $G$-invariant state on $B(H)$ and $\omega_0$ is its restriction to $u(\Lambda)^{'}$ is a $G$-invariant state on $u(\Lambda)^{'}$. Since $I(\pi(A^\alpha))$ is unital, a copy of $\mathbb C$ sits inside $I(\pi(A^\alpha))$, and so $\omega_0$ could be regarded as a $G$-map: $u(\Lambda)^{'} \to I(\pi(A^\alpha))$, that is, $\Lambda$ is  $(\pi, u)$-coamenable.

Conversely, suppose $\Lambda$ is a $u$-amenable normal subgroup which is also $(\pi, u)$-coamenable, then by Lemma \ref{t}, it acts trivially on $\mathcal{B}_{\pi,u}$, which means that
$\mathcal{B}_{\pi,u}\subseteq u(\Lambda)^{'}$ in $B(H)$. Now  the $G$-map: $u(\Lambda)^{'}\to I(\pi(A^\alpha))$ restricts to a $G$-map: $\mathcal{B}_{\pi,u}\to I(\pi(A^\alpha))$. Composing this with a $G$-idempotent: $B(H)\to \mathcal{B}_{\pi,u}$, we get a $G$-map: $B(H)\to I(\pi(A^\alpha))$. Compositing this with any $G$-invariant state on $I(\pi(A^\alpha))$ (which exists by assumption) we get a $G$-invariant state on $B(H)$, thus $u$ is amenable. 	
		
\end{proof}

In particular, if $u$ is a non amenable
unitary representation of $G$ whose restriction to a normal subgroup $\Lambda$ is reducible, then $\Lambda$ is not $u$-amenable (c.f., \cite[Corollary 4.13]{bk}).

\section{Extensions} \label{e}

The Furstenberg boundary has the following extension property in topological dynamics: if $\Lambda$ is a normal subgroup of $G$, then any action of $\Lambda$ on $\partial_FG$ extends to an action of $\Lambda$ on $\partial_FG$. In this section we seek analogous extension properties of the Furstenberg-Hamana boundary $\mathcal{B}_{\pi,u}$.

Let	$(\pi, u, H)$  be a covariant representations of $ (G,A,\alpha)$, where $A$ is unital. For an automorphism $\tau\in {\rm Aut}(G)$, we have the action $\alpha\circ \tau$ of $G$ on $A$ and $(\pi, u, H)$  is a covariant representations of $ (G,A,\alpha\circ\tau)$. An automorphism $\tau\in {\rm Aut}(G)$ is
called a $(\pi, u)$-automorphism if it extends to a C*-automorphism of $A\rtimes_{\pi,u} G$. We
denote the set of all such $G$-automorphisms by ${\rm Aut}_{\pi,u}(G)$. Note that $\tau\in {\rm Aut}_{\pi,u}(G)$ if and only if $(\pi,u, H)\sim (\pi^{'}, u\circ\tau, H)$, where $\pi^{'}$ is a modification of $\pi$ which makes a covariant pair with $u\circ\tau$ for the new action $\alpha\circ\tau$, and in this case, ${\rm Aut}_{\pi,u}(G)={\rm Aut}_{\pi^{'},u\circ\tau}(G)$.  This simply means that the automorphism on $A\rtimes_{\rm max} G$ induced by $\tau$ leaves the kernel of the
canonical *-epimorphism: $A\rtimes_{\rm max} G\to A\rtimes_{\pi, u} G$ invariant.

For the rest of this section, $A$ is assumed to be unital.

\begin{lemma} \label{ind}
For each representation $\sigma$ of $A$, ${\rm Aut}_{{\rm Ind}(\sigma)}(G)={\rm Aut}(G)$.
\end{lemma}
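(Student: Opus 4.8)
The plan is to verify the weak-equivalence criterion recorded just before the statement: for a fixed $\tau\in{\rm Aut}(G)$ I will show that ${\rm Ind}(\sigma)=(\tilde\sigma,\lambda)$ is weakly equivalent to $(\tilde\sigma,\lambda\circ\tau)$, the latter regarded as a covariant representation of $(G,A,\alpha\circ\tau)$. First I would observe that no genuine modification of $\pi=\tilde\sigma$ is needed, i.e.\ that $(\tilde\sigma,\lambda\circ\tau)$ is already covariant for $\alpha\circ\tau$: this is immediate from covariance of $(\tilde\sigma,\lambda)$ for $\alpha$, since $(\lambda\circ\tau)_g\tilde\sigma(a)(\lambda\circ\tau)_g^{*}=\lambda_{\tau(g)}\tilde\sigma(a)\lambda_{\tau(g)}^{*}=\tilde\sigma(\alpha_{\tau(g)}(a))=\tilde\sigma((\alpha\circ\tau)_g(a))$. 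Thus the ``modification of $\pi$'' in the criterion may be taken to be $\pi'=\tilde\sigma$ itself, and it remains to produce the weak equivalence $(\tilde\sigma,\lambda)\sim(\tilde\sigma,\lambda\circ\tau)$.

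The engine of the proof is a permutation unitary implementing $\tau$ on the induced space. Let $V_\tau\in B(\ell^2(G))$ be given by $V_\tau\delta_g=\delta_{\tau(g)}$ and set $\tilde V_\tau=V_\tau\otimes 1_{H_\sigma}\in B(\ell^2(G,H_\sigma))$, so that $\tilde V_\tau(\delta_g\otimes\eta)=\delta_{\tau(g)}\otimes\eta$. Writing $\tilde\sigma_\tau$ for the diagonal part of the representation ${\rm Ind}_{\alpha\circ\tau}(\sigma)$ induced from $\sigma$ for the action $\alpha\circ\tau$, so that $\tilde\sigma_\tau(a)(\delta_g\otimes\eta)=\delta_g\otimes\sigma(\alpha_{\tau(g)^{-1}}(a))\eta$ while $\tilde\sigma(a)(\delta_g\otimes\eta)=\delta_g\otimes\sigma(\alpha_{g^{-1}}(a))\eta$, a direct computation on the basis vectors $\delta_g\otimes\eta$ gives the two identities $\tilde V_\tau\lambda_g\tilde V_\tau^{*}=\lambda_{\tau(g)}$ for all $g\in G$ and $\tilde V_\tau\,\tilde\sigma_\tau(a)\,\tilde V_\tau^{*}=\tilde\sigma(a)$ for all $a\in A$. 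Together these say precisely that ${\rm Ad}(\tilde V_\tau)$ carries the covariant pair ${\rm Ind}_{\alpha\circ\tau}(\sigma)=(\tilde\sigma_\tau,\lambda)$ of $(G,A,\alpha\circ\tau)$ onto the pair $(\tilde\sigma,\lambda\circ\tau)$.

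To close the argument I would package these identities as the desired weak equivalence. On the one hand ${\rm Ad}(\tilde V_\tau)$ is a unitary equivalence of covariant representations, sending the algebraic generators $\tilde\sigma_\tau(a)\lambda_g\mapsto\tilde\sigma(a)\lambda_{\tau(g)}$, hence a fortiori a weak equivalence $(\tilde\sigma,\lambda\circ\tau)\sim{\rm Ind}_{\alpha\circ\tau}(\sigma)$. On the other hand $(\tilde\sigma,\lambda)$ and $(\tilde\sigma,\lambda\circ\tau)$ share the same diagonal representation and the same set of group unitaries $\lambda(G)=(\lambda\circ\tau)(G)$, so they generate the identical C*-subalgebra $C^{*}(\tilde\sigma(A),\lambda(G))$ of $B(\ell^2(G,H_\sigma))$; concretely, the reindexing $a\delta_g\mapsto a\delta_{\tau(g)}$ is a $*$-isomorphism of the algebraic crossed products which, by the two identities above, extends to a $*$-isomorphism $A\rtimes_{\tilde\sigma_\tau,\lambda}G\to A\rtimes_{\tilde\sigma,\lambda}G$. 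Matching this against the definition of weak containment yields $(\tilde\sigma,\lambda)\sim(\tilde\sigma,\lambda\circ\tau)$, so $\tau\in{\rm Aut}_{{\rm Ind}(\sigma)}(G)$ by the criterion; as $\tau$ was arbitrary and ${\rm Aut}_{{\rm Ind}(\sigma)}(G)\subseteq{\rm Aut}(G)$ by definition, the two coincide. The unitary computations are purely formal and valid for every representation $\sigma$, faithful or not, so no faithfulness hypothesis is invoked. I expect the only delicate point to be the bookkeeping for the change of action from $\alpha$ to $\alpha\circ\tau$: one must keep straight that the weak equivalence is witnessed by the map $g\mapsto\tau(g)$ on the group variable together with the identity on the coefficient algebra $\tilde\sigma(A)$, which is exactly what the two displayed identities encode.
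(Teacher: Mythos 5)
Your proof is correct, and its engine is the same change of variables $t\mapsto\tau(t)$ that drives the paper's proof, where one sets $a_\tau=\sum_t a_{\tau^{-1}(t)}t$ and identifies the two crossed products inside $B(\ell^2(G,H_\sigma))$. The difference is in the bookkeeping of the diagonal part, and it is not cosmetic. The paper pairs $\lambda\circ\tau$ with the diagonal representation recomputed from the action $\alpha\circ\tau$ (your $\tilde\sigma_\tau$) and asserts the operator identity $(\tilde\sigma_\tau\rtimes(\lambda\circ\tau))(a)=(\tilde\sigma\rtimes\lambda)(a_\tau)$. Read literally this cannot hold: for a fixed $a\in A$ the operators $\tilde\sigma_\tau(a)$ (with diagonal entries $\sigma(\alpha_{\tau(g)^{-1}}(a))$) and $\tilde\sigma(a)$ (with entries $\sigma(\alpha_{g^{-1}}(a))$) are genuinely different, and they agree only after conjugation by your permutation unitary $\tilde V_\tau$; relatedly, the hybrid pair $(\tilde\sigma_\tau,\lambda\circ\tau)$ used in the paper is covariant for $\alpha\circ\tau^2$ rather than for $\alpha\circ\tau$. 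Your argument supplies exactly the two ingredients that repair this. First, you observe that $(\tilde\sigma,\lambda\circ\tau)$, with the \emph{original, unmodified} $\tilde\sigma$, is covariant for $\alpha\circ\tau$; for this pair the reindexing identity $\sum_t\tilde\sigma(a_t)\lambda_{\tau(t)}=(\tilde\sigma\rtimes\lambda)(a_\tau)$ does hold on the nose, so the two covariant pairs generate the identical C*-subalgebra of $B(\ell^2(G,H_\sigma))$ and the $\tau$-twisted identity map witnesses the weak equivalence demanded by the criterion. Second, your intertwining relations $\tilde V_\tau\lambda_g\tilde V_\tau^{*}=\lambda_{\tau(g)}$ and $\tilde V_\tau\tilde\sigma_\tau(a)\tilde V_\tau^{*}=\tilde\sigma(a)$ show that this unmodified pair is unitarily equivalent to the honest induced representation ${\rm Ind}_{\alpha\circ\tau}(\sigma)=(\tilde\sigma_\tau,\lambda)$ of the new system, which is what ties the statement back to induced representations. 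So the two proofs follow the same route, but yours makes rigorous, via the unitary, a step that the paper carries out only at the level of formal symbols, and it also checks the covariance hypothesis needed to invoke the membership criterion for ${\rm Aut}_{{\rm Ind}(\sigma)}(G)$.
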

\begin{proof}
If $\tau$ is an automorphism of $G$, in the induced covariant representations $(\tilde\sigma, \lambda, \ell^2(G, H_\sigma))$ and $(\tilde\sigma, \lambda\circ\tau, \ell^2(G, H_\sigma))$, the representation $\tilde\sigma$ is calculated from $\sigma$  using actions $\alpha$ and $\alpha\circ\tau$, respectively. For a finite sum $a=\sum a_tt$ in $A[G]$, let $a_\tau:=\sum a_{\tau^{-1}(t)}t$, then
	\begin{align*}
		(\tilde\sigma\rtimes \lambda\circ\tau)(a)&=\sum \tilde\sigma(a_t)\lambda_{\tau(t)}=\sum \sigma(\alpha_{\tau(t^{-1})}(a_t)\lambda_{\tau(t)})
		\\&=\sum \sigma(\alpha_{t^{-1}}(a_{\tau^{-1}(t)})\lambda_{t}=(\tilde\sigma\rtimes \lambda)(a_\tau),
	\end{align*}
thus there is a C*-automorphism from $A\rtimes_{\tilde\sigma,\lambda\circ\tau} G$ onto $A\rtimes_{\tilde\sigma,\lambda} G$, sending $\lambda_{\tau(t)}$ to $\lambda_t$. 	
\end{proof}

Any inner automorphism is a $(\pi, u)$-automorphism for any covariant representation $(\pi, u, H)$. Thus, $G/Z(G)$ is identified with a normal subgroup of
${\rm Aut}_{\pi,u}(G)$. Since by Proposition \ref{trivial2}, $Z(G)$ acts trivially on  $\mathcal{B}_{\pi,u}$, it follows that the action of $G$ on $\mathcal{B}_{\pi,u}$ factors through $G/Z(G)$.

\begin{lemma} \label{ext}
	The action of $G$ on $\mathcal{B}_{\pi,u}$ extends
	to an action of ${\rm Aut}_{\pi,u}(G)$
	on $\mathcal{B}_{\pi,u}$.
\end{lemma}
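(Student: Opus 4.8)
The plan is to attach to each $\tau\in{\rm Aut}_{\pi,u}(G)$ a single C*-automorphism $\beta_\tau$ of $\mathcal{B}_{\pi,u}$, and then to show that $\tau\mapsto\beta_\tau$ is a homomorphism into ${\rm Aut}(\mathcal{B}_{\pi,u})$ restricting to the given $G$-action on the inner automorphisms. Write $\rho(g)={\rm Ad}_{u_g}|_{\mathcal{B}_{\pi,u}}$ for the $G$-action. The starting observation is that the twisted pair $(\pi,u\circ\tau,H)$ is a covariant representation of $(G,A,\alpha\circ\tau)$ with the \emph{same} fixed-point algebra $A^{\alpha\circ\tau}=A^\alpha$ (since $\tau$ permutes $G$); moreover a map $\phi:B(H)\to B(H)$ is $G$-equivariant for ${\rm Ad}_{u\circ\tau}$ if and only if it is $G$-equivariant for ${\rm Ad}_{u}$, again because $\tau$ permutes $G$. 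Hence the defining set $\mathfrak M$ is literally the same for both pairs, so $\mathcal{B}_{\pi,u\circ\tau}=\mathcal{B}_{\pi,u}$ as operator systems in $B(H)$ carrying the same Choi--Effros product; the only difference is that the $G$-action on $\mathcal{B}_{\pi,u\circ\tau}$ is $g\mapsto{\rm Ad}_{u_{\tau(g)}}=\rho\circ\tau$.

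Next I would use membership in ${\rm Aut}_{\pi,u}(G)$ to produce $\beta_\tau$. By hypothesis $(\pi,u,H)$ and $(\pi',u\circ\tau,H)$ are weakly equivalent, and since $\tau$ permutes $G$ the concrete C*-algebras generated by $\pi(A)\cup u(G)$ and $\pi(A)\cup(u\circ\tau)(G)$ coincide; thus the canonical $*$-isomorphism between the two crossed products is the identity on $\pi(A)$ and sends $u_{\tau(t)}\mapsto u_t$. By the argument of Lemma \ref{wc} it extends to a u.c.p. $G$-map $B(H)\to B(H)$ (with $A\rtimes_{\pi,u\circ\tau}G$ in its multiplicative domain, forcing equivariance), whose restriction to the boundary is a $G$-map $\Theta_\tau:\mathcal{B}_{\pi,u\circ\tau}\to\mathcal{B}_{\pi,u}$ fixing $\pi(A^\alpha)$. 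Running the same construction for $\tau^{-1}$ and invoking rigidity (Proposition \ref{b}$(iv)$) shows the two $G$-maps are mutually inverse, so $\Theta_\tau$ is a C*-isomorphism. Under the identification of underlying spaces from the first paragraph, $\Theta_\tau$ becomes an automorphism of $\mathcal{B}_{\pi,u}$ intertwining $\rho\circ\tau$ with $\rho$; setting $\beta_\tau:=\Theta_\tau^{-1}$ yields an automorphism with
\[
\beta_\tau\,\rho(g)\,\beta_\tau^{-1}=\rho(\tau(g)),\qquad \beta_\tau|_{\pi(A^\alpha)}={\rm id},\qquad g\in G.
\]

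Finally I would check that $\tau\mapsto\beta_\tau$ is a well-defined homomorphism extending $\rho$, the key tool again being rigidity. For uniqueness: if $\beta,\beta'$ are automorphisms of $\mathcal{B}_{\pi,u}$ fixing $\pi(A^\alpha)$ with $\beta\rho(g)\beta^{-1}=\rho(\tau(g))=\beta'\rho(g)\beta'^{-1}$, then $\beta'^{-1}\beta$ is a $\rho$-equivariant $G$-map fixing $\pi(A^\alpha)$, hence the identity by Proposition \ref{b}$(iv)$; so $\beta_\tau$ is independent of all choices. The same uniqueness gives $\beta_{\tau\sigma}=\beta_\tau\beta_\sigma$, because $\beta_\tau\beta_\sigma$ fixes $\pi(A^\alpha)$ and conjugates $\rho(g)$ to $\rho((\tau\sigma)(g))$, and likewise $\beta_{{\rm id}}={\rm id}$. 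For an inner automorphism $\tau={\rm Ad}_g$ one has $u\circ\tau={\rm Ad}_{u_g}\circ u$, so $\rho(g)$ itself fixes $\pi(A^\alpha)$ and conjugates $\rho(h)$ to $\rho(ghg^{-1})=\rho(\tau(h))$; by uniqueness $\beta_{{\rm Ad}_g}=\rho(g)$, which is compatible with $Z(G)$ acting trivially (Proposition \ref{trivial2}) and shows the ${\rm Aut}_{\pi,u}(G)$-action extends the $G$-action through $G/Z(G)\hookrightarrow{\rm Aut}_{\pi,u}(G)$.

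The main obstacle is the existence step of the second paragraph: one must genuinely produce the intertwining automorphism $\beta_\tau$, and this is exactly where $\tau\in{\rm Aut}_{\pi,u}(G)$ is indispensable, since it is precisely the weak equivalence $(\pi,u)\sim(\pi',u\circ\tau)$ that feeds Lemma \ref{wc}. For a general $\tau\in{\rm Aut}(G)$ there is no canonical $G$-map between $\mathcal{B}_{\pi,u\circ\tau}$ and $\mathcal{B}_{\pi,u}$ beyond the mere relabelling of the action, and no automorphism results. Care is also needed to confirm that the canonical crossed-product isomorphism fixes $\pi(A^\alpha)$, as this is what makes rigidity applicable and thereby renders the whole assignment canonical.
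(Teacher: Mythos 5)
Your proof is correct and follows essentially the same route as the paper's: both exploit the weak equivalence built into the definition of ${\rm Aut}_{\pi,u}(G)$ to produce, via Lemma \ref{wc}, mutually inverse $G$-maps between $\mathcal{B}_{\pi,u\circ\tau}$ and $\mathcal{B}_{\pi,u}$, and then invoke rigidity (Proposition \ref{b}$(iv)$) to obtain invertibility, the homomorphism property, and compatibility with the inner automorphisms. Your explicit observation that the defining set $\mathfrak M$ is literally the same for $(\pi,u)$ and $(\pi,u\circ\tau)$, so that the two boundaries coincide as operator systems with the same Choi--Effros product, is a clarification of an identification the paper leaves implicit, but it does not change the substance of the argument.
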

\begin{proof}
	 We adapt the proof of \cite[Theorem 5.3]{bk}. Let	$(\pi, u, H)$  be a covariant representations of $ (G,A,\alpha)$ and let $\tau\in{\rm Aut}_{\pi,u}(G)$. Then $(\pi,u, H)\sim (\pi^{'}, u\circ\tau, H)$, where $\pi^{'}$ is an appropriate modification of $\pi$, making the latter a covariant representation of $ (G,A,\alpha\circ\tau)$. Take a *-isomorphism $\tilde\tau: A\rtimes_{\pi, u} G\to A\rtimes_{\pi^{'}, u\circ\tau} G$ sending $\pi(a)u_g$ to $\pi^{'}(a)u_{\tau(g)}$, for each $a\in A$ and $g\in G$. By Lemma \ref{wc}, there is a *-isomorphism $\hat\tau: \mathcal{B}_{\pi,u} \to \mathcal{B}_{\pi^{'},u\circ\tau}$ satisfying
	 $$\hat\tau(u_gxu_g^*)=u_{\tau(g)}\hat\tau(x)u_{\tau(g)}^*$$
for $g\in G$ and $x\in\mathcal{B}_{\pi,u}$. which shows that
$\hat\tau\circ\hat\tau^{-1}$ is a $G$-map on $\mathcal{B}_{\pi,u}$, and so is  identity by $G$-rigidity. Similarly, $\hat{\tau}_2^{-1}\circ\hat{\tau}_1\circ\hat{\tau}_1\hat{\tau}_2$ is identity on $\mathcal{B}_{\pi,u}$, for $\tau_1, \tau_2\in{\rm Aut}_{\pi,u}(G)$.  Thus ${\rm Aut}_{\pi,u}(G)$ acts	on $\mathcal{B}_{\pi,u}$. Let $\tau_g=$Ad$_g$ be the inner automorphism at $g\in G$, then a simple calculation shows that $\hat\tau_g\circ$Ad$_{u_{g}^{*}}$ is a $G$-map on $\mathcal{B}_{\pi,u}$, which again has to be identity, hence $\hat\tau_g=$Ad$_{u_{g}}$, and we are done. 	
\end{proof}

Observe that the above argument also shows that the extension is unique as long as it satisfies the relation
$$\hat\tau(u_gxu_g^*)=u_{\tau(g)}\hat\tau(x)u_{\tau(g)}^*$$
for $g\in G$ and $x\in\mathcal{B}_{\pi,u}$. From now on we talk about such an extension.

Recall that Rad$_{\pi,u}(G)$ is kernel of the action of $G$ on $\mathcal{B}_{\pi,u}$. then by the above relation, each $\tau\in{\rm Aut}_{\pi,u}(G)$ keeps Rad$_{\pi,u}(G)$ invariant. This induces a group homomorphism
$$\kappa: {\rm Aut}_{\pi,u}(G)\to {\rm Aut}_{\pi,u}\big(G/{\rm Rad}_{\pi,u}(G)\big),$$
whose kernel is nothing but the kernel of the above unique extended action (again by the above relation). In particular, if the action of $G$ on $\mathcal{B}_{\pi,u}$ is faithful, then so is its unique extension to an action of ${\rm Aut}_{\pi,u}(G)$
on $\mathcal{B}_{\pi,u}$, satisfying the above relation.

\begin{proposition} \label{subext}
Let	$(\pi, u, H)$  be a covariant representations of $ (G,A,\alpha)$ and $\Lambda$ be a normal subgroup of $G$, and let $u_0$ be the restriction of $u$ on $\Lambda$. Then the action of $\Lambda$
on $\mathcal{B}_{\pi,u_0}$ extends to an action of $G$
on $\mathcal{B}_{\pi,u_0}$. When the $\Lambda$-action is faithful, the kernel of the $G$-action is the inverse image of the centralizer of $u(\Lambda)$ in $u(G)$.
\end{proposition}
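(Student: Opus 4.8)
The plan is to realise the $G$-action on $\mathcal{B}_{\pi,u_0}$ as the composite of the conjugation homomorphism $G\to{\rm Aut}_{\pi,u_0}(\Lambda)$ with the extended action furnished by Lemma \ref{ext}. First I would fix $g\in G$ and consider $\tau_g\in{\rm Aut}(\Lambda)$ given by $\tau_g(\lambda)=g\lambda g^{-1}$, which is well defined precisely because $\Lambda\unlhd G$. The point is that ${\rm Ad}_{u_g}$ already implements $\tau_g$ on the subgroup crossed product: using $u_g\pi(a)u_g^\ast=\pi(\alpha_g(a))$ and $u_gu_\lambda u_g^\ast=u_{\tau_g(\lambda)}$, one gets ${\rm Ad}_{u_g}(\pi(a)u_\lambda)=\pi(\alpha_g(a))\,u_{\tau_g(\lambda)}$, and normality keeps $\tau_g(\lambda)$ inside $\Lambda$, so ${\rm Ad}_{u_g}$ restricts to a $\ast$-automorphism of $A\rtimes_{\pi,u_0}\Lambda$ extending $\tau_g$. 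Thus each $\tau_g$ lies in ${\rm Aut}_{\pi,u_0}(\Lambda)$, and $\kappa\colon g\mapsto\tau_g$ is a homomorphism $G\to{\rm Aut}_{\pi,u_0}(\Lambda)$. Composing $\kappa$ with the action of ${\rm Aut}_{\pi,u_0}(\Lambda)$ on $\mathcal{B}_{\pi,u_0}$ from Lemma \ref{ext} yields the sought $G$-action $\beta$, with $\beta_g=\phi_0\circ{\rm Ad}_{u_g}$ restricted to $\mathcal{B}_{\pi,u_0}$ (here $\phi_0$ is the chosen minimal idempotent), characterised by $\beta_g(u_\lambda xu_\lambda^\ast)=u_{\tau_g(\lambda)}\beta_g(x)u_{\tau_g(\lambda)}^\ast$.

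To confirm that $\beta$ extends the original $\Lambda$-action I would observe that for $\lambda\in\Lambda$ the automorphism $\tau_\lambda$ is inner in $\Lambda$, so the computation at the end of the proof of Lemma \ref{ext} gives $\beta_\lambda={\rm Ad}_{u_\lambda}$ on $\mathcal{B}_{\pi,u_0}$; equivalently, $\Lambda$-equivariance of $\phi_0$ together with $\phi_0|_{{\rm Im}(\phi_0)}={\rm id}$ gives $\phi_0\circ{\rm Ad}_{u_\lambda}={\rm Ad}_{u_\lambda}$ on ${\rm Im}(\phi_0)$ directly. A short computation also records the \emph{twisted equivariance} $\beta_g\circ{\rm Ad}_{u_\lambda}={\rm Ad}_{u_{\tau_g(\lambda)}}\circ\beta_g$, which is the mechanism propagating the action of $G$ through the normal subgroup, and which makes $\beta$ a genuine homomorphism (composing twists along $\tau_{g_1}\tau_{g_2}=\tau_{g_1g_2}$).

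For the kernel I would argue in the spirit of Proposition \ref{res}$(ii)$, now assuming the $\Lambda$-action on $\mathcal{B}_{\pi,u_0}$ is faithful. The easy inclusion: if $\beta_g={\rm id}$, then the action relation $\beta_g\beta_h\beta_g^{-1}=\beta_{\tau_g(h)}$ forces ${\rm Ad}_{u_h}={\rm Ad}_{u_{\tau_g(h)}}$ on $\mathcal{B}_{\pi,u_0}$ for every $h\in\Lambda$, whence faithfulness yields $h=\tau_g(h)=ghg^{-1}$, so $g$ centralises $\Lambda$ and in particular $u_g$ centralises $u(\Lambda)$; thus $g$ lies in the inverse image of the centraliser of $u(\Lambda)$ in $u(G)$. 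Conversely, if $u_g$ centralises $u(\Lambda)$ then $u_{\tau_g(\lambda)}=u_gu_\lambda u_g^\ast=u_\lambda$, so the twist in the relation above is trivial and $\beta_g$ becomes a genuine $G$-map on $\mathcal{B}_{\pi,u_0}$; rigidity (Proposition \ref{b}$(iv)$) should then collapse $\beta_g$ to the identity. The hard part, and the step I would spend the most care on, is exactly this converse: rigidity applies only once one knows that $\beta_g$ fixes $\pi(A^\Lambda)$ pointwise, whereas a priori $\beta_g$ moves $\pi(A^\Lambda)$ by $\pi\circ\alpha_g$. Reconciling this with the centraliser hypothesis, so that the residual action of $\alpha_g$ on $\pi(A^\Lambda)\subseteq u(\Lambda)'$ is trivial on the boundary, is where the faithfulness assumption must be used decisively, and is the main obstacle to a clean proof.
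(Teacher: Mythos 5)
Your construction of the extension itself is sound and follows the paper's strategy (conjugation into ${\rm Aut}_{\pi,u_0}(\Lambda)$ plus Lemma \ref{ext}): indeed $\beta_g=\phi_0\circ{\rm Ad}_{u_g}|_{\mathcal{B}_{\pi,u_0}}$ is twisted-equivariant, restricts to ${\rm Ad}_{u_\lambda}$ on $\Lambda$, and composites such as $\beta_{(gh)^{-1}}\circ\beta_g\circ\beta_h$ act on $\pi(A^{\alpha_0})$ by $\pi\circ\alpha_{(gh)^{-1}}\circ\alpha_g\circ\alpha_h={\rm id}$, so rigidity (Proposition \ref{b}$(iv)$) makes $\beta$ a genuine action; this proves the first assertion. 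The gap is the converse inclusion for the kernel, and the obstacle you flag at the end is not a step you failed to finish --- for \emph{your} extension the claim is false. Take $G=\mathbb{F}_2\times\mathbb{Z}/2$, $\Lambda=\mathbb{F}_2\times\{1\}$, $A=\mathbb{C}\oplus\mathbb{C}$ with $\Lambda$ acting trivially and the generator $\epsilon$ of $\mathbb{Z}/2$ swapping the two summands, $H=\ell^2(\mathbb{F}_2)\otimes\mathbb{C}^2$, $u=\lambda^{\mathbb{F}_2}\otimes\sigma$ (where $\sigma_\epsilon$ is the flip) and $\pi=1\otimes M$. Then $(\pi,u_0)={\rm Ind}(M)$, so by Proposition \ref{main1} the boundary $\mathcal{B}_{\pi,u_0}$ contains a $\Lambda$-invariant copy of $C(\partial_F\mathbb{F}_2)$ and the $\Lambda$-action is faithful; the unitary $u_{(e,\epsilon)}=1\otimes\sigma_\epsilon$ commutes with $u(\Lambda)=\lambda^{\mathbb{F}_2}\otimes 1$; yet $\beta_{(e,\epsilon)}$ swaps the two minimal projections of $\pi(A^{\alpha_0})=\pi(A)\subseteq\mathcal{B}_{\pi,u_0}$, so $(e,\epsilon)$ lies in the preimage of the centralizer but not in $\ker\beta$. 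The residual action of $\alpha_g$ on the $\Lambda$-fixed-point algebra, which your formula deliberately retains, is exactly what destroys the converse inclusion; note also that the twisted-equivariance relation does \emph{not} characterize the extension, since two different extensions (yours and the paper's) both satisfy it while disagreeing on $\pi(A^{\alpha_0})$.

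The paper's proof evades this by letting the action of $g$ depend only on the abstract automorphism $\tau_g|_\Lambda\in{\rm Aut}_{\pi,u_0}(\Lambda)$: the $G$-action is defined as $g\mapsto\widehat{\vartheta(g)}$, where $\vartheta\colon G\to{\rm Aut}_{\pi,u_0}(\Lambda)$ is conjugation and $\tau\mapsto\hat\tau$ is the extension of Lemma \ref{ext}, normalized so that $\hat\tau$ is determined by $\tau$ alone (in particular $\widehat{{\rm id}}={\rm id}$). With that choice the centralizer $C_G(\Lambda)=\ker\vartheta$ acts trivially \emph{by construction}, and the remark preceding the proposition (a faithful $\Lambda$-action forces the extended ${\rm Aut}_{\pi,u_0}(\Lambda)$-action to be faithful, via the relation $\hat\tau(u_\lambda xu_\lambda^*)=u_{\tau(\lambda)}\hat\tau(x)u_{\tau(\lambda)}^*$) gives the reverse inclusion, so the kernel is exactly $C_G(\Lambda)$. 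Finally, $C_G(\Lambda)$ coincides with the preimage of the centralizer of $u(\Lambda)$ in $u(G)$ under the faithfulness hypothesis: if $u_g$ commutes with $u(\Lambda)$ then $u_{[g,\lambda]}=1$, and $[g,\lambda]\in\Lambda$ by normality, while faithfulness forces $\Lambda\cap\ker u=\{e\}$ (any $\mu\in\Lambda\cap\ker u$ has ${\rm Ad}_{u_\mu}={\rm id}$ on the boundary). So your inclusion $\ker\beta\subseteq C_G(\Lambda)$ is correct and is the half where faithfulness enters; the other half is not something to prove about your $\beta$, but something that must be built into the definition of the extension by factoring it through ${\rm Aut}_{\pi,u_0}(\Lambda)$.
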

\begin{proof}
Since $\Lambda$ is normal, the C*-algebra $A\rtimes_{\pi,u_0} G$ is $G$-invariant and Ad$_{u_{g}}$ is in ${\rm Aut}_{\pi,u_0}(G)$, for $g\in G$.
This implements a group homomorphism $\vartheta: G\to  {\rm Aut}_{\pi,u_0}(G)$ whose kernel is the inverse image of the centralizer of $u(\Lambda)$ in $u(G)$. When we have the
faithfulness of the $\Lambda$-action, the action of $G/ker(\vartheta)$ on $\mathcal{B}_{\pi,u_0}$ is faithful and so the the kernel of the $G$-action is nothing but $ker(\vartheta)$.
	
\end{proof}

\section{Traces on crossed products}\label{tr}

The groups with the unique trace property are studied in  \cite{bkko}.
 Every trace on the reduced C*-algebra $C_r^*(G)$ of a discrete group $G$  is known to be supported on the amenable radical
Rad$(G)$ \cite[Theorem 4.1]{bkko}, which is the kernel of the action of $G$ on the Furstenberg boundary $\partial_FG$.

In this section we study traces on reduced crossed products $A\rtimes_{\rm red} G$. In fact, we could do this for the slightly more general crossed products $A\rtimes_{{\rm Ind}(\sigma)} G$, for any representation $(\sigma, H_\sigma)$ of $A$. A trace  on $A\rtimes_{{\rm Ind}(\sigma)} G$ is nothing but a $G$-map $\tau: A\rtimes_{{\rm Ind}(\sigma)}G\to\mathbb C$, where
$\mathbb C$ carries the trivial action of $G$. By $G$-injectivity, $\tau$ extends to a $G$-map $\tilde\tau: B\big(\ell^2(G, H_\sigma)\big)\to C(\partial_FG)$. By Proposition \ref{main1}, we may regard $\tilde\tau$ as a $G$-map $\tilde\tau: B\big(\ell^2(G, H_\sigma)\big)\to \mathcal{B}_{\tilde\sigma,\lambda}$. Now a trace $\tau$ on $A\rtimes_{{\rm Ind}(\sigma)} G$ is  supported on the normal subgroup Rad$_{\tilde\sigma,\lambda}(G)\unlhd G$ iff the extension
$\tilde\tau: B\big(\ell^2(G, H_\sigma)\big)\to \mathcal{B}_{\tilde\sigma,\lambda}$ vanishes outside the  kernel of the action of $G$ on $\mathcal{B}_{\tilde\sigma,\lambda}$.

Let	$(\pi, u, H)$  be a covariant representations of $ (G,A,\alpha)$. Let us call a trace on $A\rtimes_{\pi,u} G$  {\it extendable} if it extends to a $G$-map:  $B(H)\to  \mathcal{B}_{\pi,u}$. The above observation means that any trace on $A\rtimes_{{\rm Ind}(\sigma)} G$ is extendable.

The next lemma extends \cite[Lemma 2]{as}, as well as \cite[Lemma 2.2]{hk}.

\begin{lemma} \label{rad2}
	Let	$(\pi, u, H)$  be a covariant representation,
	$\psi: B(H) \to \mathcal{B}_{\pi,u}$ be a $G$-map and $g\notin$Rad$_{\pi,u}(G)$, then $\psi(u_g)$ is in the kernel of  some state on $\mathcal{B}_{\pi,u}$.
\end{lemma}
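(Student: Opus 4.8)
The plan is to recast the conclusion as a statement about the numerical range and then to kill $\psi(u_g)$ by a Powers-type averaging. Saying that $\psi(u_g)$ lies in the kernel of some state is precisely saying that $0$ belongs to the state-numerical range $W(\psi(u_g)):=\{\omega(\psi(u_g)):\omega\in S(\mathcal B_{\pi,u})\}$. Since the state space $S(\mathcal B_{\pi,u})$ is weak$^*$-compact and convex and $\omega\mapsto\omega(\psi(u_g))$ is affine and weak$^*$-continuous, $W(\psi(u_g))$ is a compact convex subset of $\mathbb C$. Hence it suffices to produce, for every $\varepsilon>0$, a finite average $c_n:=\frac1n\sum_{i=1}^n u_{h_i}\psi(u_g)u_{h_i}^*$ with $\Vert c_n\Vert<\varepsilon$. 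Indeed, fixing any state $\omega$, each $\mathrm{Ad}_{u_{h_i}}$ is a unital $*$-automorphism of $\mathcal B_{\pi,u}$, so $\omega(c_n)=\frac1n\sum_i(\omega\circ\mathrm{Ad}_{u_{h_i}})(\psi(u_g))$ is a convex combination of points of $W(\psi(u_g))$ and therefore lies in $W(\psi(u_g))$; as $|\omega(c_n)|\le\Vert c_n\Vert\to0$ and $W(\psi(u_g))$ is closed, we conclude $0\in W(\psi(u_g))$.

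The second step transports this averaging back to $B(H)$, where there is genuine cancellation to exploit. Because $\psi$ is linear and $G$-equivariant and the $G$-action on $\mathcal B_{\pi,u}$ is the restriction of $\mathrm{Ad}_{u}$, we have $u_{h_i}\psi(u_g)u_{h_i}^*=\psi(u_{h_i}u_gu_{h_i}^*)=\psi(u_{h_igh_i^{-1}})$, so $c_n=\psi\big(\frac1n\sum_i u_{h_igh_i^{-1}}\big)$. Since $\psi$ is unital completely positive, hence contractive, $\Vert c_n\Vert\le\big\Vert\frac1n\sum_{i=1}^n u_{h_igh_i^{-1}}\big\Vert_{B(H)}$. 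Thus it is enough to find, for each $\varepsilon>0$, group elements $h_1,\dots,h_n$ with $\big\Vert\frac1n\sum_i u_{h_igh_i^{-1}}\big\Vert_{B(H)}<\varepsilon$, i.e. a Powers-type averaging estimate for the single unitary $u_g$.

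This averaging estimate is the heart of the matter, and it is exactly where the hypothesis $g\notin\mathrm{Rad}_{\pi,u}(G)$ enters. By Lemma \ref{t} and the remark following it, $\mathrm{Rad}_{\pi,u}(G)$ is precisely the kernel of the $G$-action on $\mathcal B_{\pi,u}$, so $g\notin\mathrm{Rad}_{\pi,u}(G)$ means that $\mathrm{Ad}_{u_g}$ is a non-identity automorphism of $\mathcal B_{\pi,u}$; as $\mathcal B_{\pi,u}$ is injective it is generated by its projections, so $g$ moves some projection $p\in\mathcal B_{\pi,u}$. The plan is to feed this non-triviality into the boundary (minimality and strong proximality) structure of $\mathcal B_{\pi,u}$ recorded in Proposition \ref{b}, following the method of \cite[Lemma 2]{as} and \cite[Lemma 2.2]{hk}: the boundary property lets one choose the $h_i$ so that the conjugates $h_igh_i^{-1}$ are placed in sufficiently ``general position'' for the spatial averages $\frac1n\sum_i u_{h_igh_i^{-1}}$ to have vanishing norm. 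I expect this last step to be the main obstacle, since the purely local information that $g$ moves $p$ must be upgraded to a global norm-cancellation estimate. A convenient equivalent target for the same contradiction is the self-adjoint reformulation ``$\psi\big(e^{i\phi}u_g+e^{-i\phi}u_g^*\big)$ is not strictly positive for any real $\phi$'', since strict positivity $\ge\varepsilon\,\mathrm{id}$ would be preserved by each $\mathrm{Ad}_{u_{h_i}}$ and by $\psi$, and hence would directly contradict the vanishing of the averages.
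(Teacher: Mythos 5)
Your first two steps are sound as reductions: the numerical-range reformulation is correct, and the identity $u_{h_i}\psi(u_g)u_{h_i}^*=\psi(u_{h_igh_i^{-1}})$ together with contractivity of $\psi$ does show that it would suffice to make $\big\Vert\frac1n\sum_i u_{h_igh_i^{-1}}\big\Vert_{B(H)}$ small. The fatal problem is that this sufficient condition is strictly stronger than the lemma and is \emph{false} under the lemma's hypotheses, so the third step you flag as ``the main obstacle'' cannot be carried out. Indeed, whenever $u$ has a nonzero invariant vector $\xi$, every average $\frac1n\sum_i u_{h_igh_i^{-1}}$ fixes $\xi$ and therefore has norm exactly $1$; and an invariant vector is perfectly compatible with $\mathrm{Rad}_{\pi,u}(G)=\{e\}$, because the boundary here is relative to $\pi(A^\alpha)$ rather than to $\mathbb C$. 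Concretely, take $G=\mathbb F_2$, let $S=\{s_0\}\sqcup G$ be the union of a fixed point with the left translation action, $A=\ell^\infty(S)$, and take the Koopman covariant pair $(M,\lambda^S,\ell^2(S))$ of Section \ref{bdry}. Then $u=\lambda^S$ fixes $\delta_{s_0}$, while (arguing as in that example, using that $1_{\{s_0\}}\in\pi(A^\alpha)$ lies in the multiplicative domain of any element of $\mathfrak M$) one gets $\mathcal{B}_{M,\lambda^S}\cong\mathbb C\oplus C(\partial_F\mathbb F_2)$ with faithful $G$-action, so every $g\neq e$ satisfies the hypothesis of the lemma, yet no averaging estimate exists for any $g$. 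Even in the purely group case $A=\mathbb C$, $u=\lambda^G$, where averaging techniques are genuinely available, Powers-type averaging characterizes C*-simplicity, which is strictly stronger than faithfulness of the action on $\partial_FG$ \cite{l}; so your reduction loses ground there as well. The root cause is the inequality $\Vert c_n\Vert\le\big\Vert\frac1n\sum_iu_{h_igh_i^{-1}}\big\Vert_{B(H)}$: by passing to $B(H)$ \emph{before} any cancellation, you discard exactly the help $\psi$ provides, namely that $\psi$ can annihilate the parts of $B(H)$ (such as a trivial subrepresentation) that obstruct averaging.

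The paper's proof runs in the opposite direction and produces a single state rather than a norm estimate; its key structural input is rigidity, Proposition \ref{b}$(iv)$, which your argument never uses. Since $g\notin\mathrm{Rad}_{\pi,u}(G)$, some $x\in\mathcal{B}_{\pi,u}$ with $0\le x\le 1$ satisfies $x\neq u_gxu_g^*$, and one chooses a state $\omega$ on $\mathcal{B}_{\pi,u}$ with $\omega(x)=1$ and $\omega(u_gxu_g^*)=0$. Because $\psi$ restricts to the identity on $\mathcal{B}_{\pi,u}$ by rigidity, the state $\rho:=\omega\circ\psi$ on $B(H)$ still takes these exact values on $x$ and $u_gxu_g^*$, and then two applications of the Cauchy--Schwarz inequality (using $\rho\big((1-x)^2\big)=0$ and $\rho(u_gx^2u_g^*)=0$) force $\rho(u_g)=0$, i.e.\ $\omega(\psi(u_g))=0$. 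All cancellation happens inside $\mathcal{B}_{\pi,u}$, after applying $\psi$ and at the level of one well-chosen state; no global norm cancellation in $B(H)$ is required, and, as the example above shows, none is available. If you wish to keep an averaging picture, the averaging would have to be performed on $\psi(u_g)$ inside $\mathcal{B}_{\pi,u}$, not on $u_g$ upstairs; but the rigidity-plus-Cauchy--Schwarz argument makes this unnecessary.
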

\begin{proof}
	Let $g\notin$Rad${}_{\pi,u}(G)$, and choose $x\in \mathcal{B}_{\pi,u}$ with $x\neq u_gxu_g^*$.
	We may assume that $0\leq x\leq 1$. Choose a state $\omega$ on $\mathcal{B}_{\pi,u}$ with $\omega(x)=1$ and $\omega(u_gxu_g^*)=0$.
	Since $\psi$ acts as identity on $\mathcal{B}_{\pi,u}$ by Proposition \ref{b}$(iv)$, $\omega\circ \psi(x)=1$ and $ \omega\circ\psi(u_gxu_g^*)=0$. For the state $\rho=\omega\circ \psi$, we  have $ \rho(u_gx^2u_g^*)= \rho(1-x^2)=0$,  and so by Cauchy-Schwartz inequality,  $ \rho\big((1-x)u_g\big)= \rho\big(xu_g\big)=0$, thus
	$$ \rho(u_g)=\rho\big((1-x)u_g\big)+ \rho\big(xu_g\big)=0,$$
	that is, $\psi(u_g)$ is in the kernel of $\omega$.
		
\end{proof}

A trace  on $A\rtimes_{\pi,u} G$ is nothing but a $G$-map $\tau: A\rtimes_{\pi,u}G\to\mathbb C$, where
$\mathbb C$ carries the trivial action of $G$ by Ad$_u$. We say that $\tau$ is supported on a normal subgroup $\Lambda\unlhd G$ if $\tau(u_g)=0$, for $g\notin \Lambda$.

\begin{proposition} \label{rad3}
	Let	$(\pi, u, H)$  be a covariant representations of $ (G,A,\alpha)$. Then any extendable trace on $A\rtimes_{\pi,u} G$ is supported on Rad$_{\pi,u}(G)$.
\end{proposition}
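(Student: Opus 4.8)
The plan is to reduce the statement to Lemma~\ref{rad2}, which already carries the essential dynamical content. The one extra ingredient I would supply is the observation that, although the extension $\psi$ a priori takes values in the whole boundary $\mathcal{B}_{\pi,u}$, the requirement that it extend the scalar-valued trace $\tau$ forces $\psi(u_g)$ to be a scalar multiple of the unit of $\mathcal{B}_{\pi,u}$ for every $g\in G$.

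First I would fix $g\notin\mathrm{Rad}_{\pi,u}(G)$ and invoke extendability to produce a $G$-map $\psi: B(H)\to\mathcal{B}_{\pi,u}$ restricting to $\tau$ on $A\rtimes_{\pi,u}G$. Since $\mathcal{B}_{\pi,u}$ is unital (its unit being $\mathrm{id}_H$, which lies in the image of the defining idempotent $G$-map) and $\tau$ takes values in $\mathbb{C}=\mathbb{C}\,\mathrm{id}_H\subseteq\mathcal{B}_{\pi,u}$, the extension property gives $\psi(u_g)=\tau(u_g)\,\mathrm{id}_H$, a scalar multiple of the unit.

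Next I would apply Lemma~\ref{rad2} to $\psi$ and this $g$: as $g\notin\mathrm{Rad}_{\pi,u}(G)$, there is a state $\omega$ on $\mathcal{B}_{\pi,u}$ with $\omega(\psi(u_g))=0$. Combining this with the previous step and using $\omega(\mathrm{id}_H)=1$ yields $0=\omega(\psi(u_g))=\tau(u_g)\,\omega(\mathrm{id}_H)=\tau(u_g)$. As $g\notin\mathrm{Rad}_{\pi,u}(G)$ was arbitrary, $\tau$ vanishes on every such $u_g$, which is exactly the assertion that $\tau$ is supported on $\mathrm{Rad}_{\pi,u}(G)$.

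I do not anticipate a genuine obstacle, since the heart of the argument is Lemma~\ref{rad2}. The only point deserving care is the passage from ``$\psi(u_g)$ lies in the kernel of a state'' to ``$\tau(u_g)=0$'': this works precisely because $\psi(u_g)$ is a scalar multiple of the identity, so that any state returns that scalar. This in turn relies on $\mathcal{B}_{\pi,u}$ being unital and on $\psi$ genuinely restricting to the scalar-valued $\tau$ on the copy of $A\rtimes_{\pi,u}G$ sitting inside $B(H)$; were $\tau$ not assumed extendable one could not guarantee such a $\psi$, which is exactly why extendability is the operative hypothesis.
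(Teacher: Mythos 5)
Your proposal is correct and follows essentially the same route as the paper's own proof: both identify $\psi(u_g)=\tau(u_g)\,1$ as a scalar multiple of the unit of $\mathcal{B}_{\pi,u}$ and then apply Lemma~\ref{rad2} to produce a state annihilating it, forcing $\tau(u_g)=0$ for $g\notin\mathrm{Rad}_{\pi,u}(G)$. The extra care you take in justifying why a state kills the scalar is exactly the computation $\tau(u_g)=\omega(\tau(u_g)1)=\omega(\tilde\tau(u_g))=0$ appearing in the paper.
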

\begin{proof}
Let $\tau$ be an extendable trace on $A\rtimes_{\pi,u} G$ and consider the $G$-extension $\tilde\tau: B(H) \to \mathcal{B}_{\pi,u}$. Then, $\tilde\tau(u_g)=\tau(u_g)1$, for any $g\in G$. If $g\notin$Rad$_{\pi,u}(G)$, then by Lemma \ref{rad2}, $\tilde\tau(u_g)$ is in the kernel some state $\omega$ on $\mathcal{B}_{\pi,u}$. Hence, $\tau(u_g) = \omega(\tau(u_g)1)=\omega(\tilde\tau(u_g))=0$.
\end{proof}

Recall that when $A$ is unital, for a covariant representation $(\pi,u,H)$, $A\rtimes_{\pi,u} G$ is nothing but the C*-algebra generated by $\pi(A)\cup u(G)$ in $B(H)$. In this case, there is a copy of $C^*_u(G)$ inside  $A\rtimes_{\pi,u} G$. In general, there is a copy of $C^*_u(G)$ inside  $M(A\rtimes_{\pi,u} G)$.

\begin{corollary} \label{rad4}
	Let	$(\pi, u, H)$  be a covariant representation of $ (G,A,\alpha)$. If $G$ acts faithfully on $\mathcal{B}_{\pi,u}$, then $A\rtimes_{\pi,u} G$  either has no  extendable trace, or each two extendable traces on $A\rtimes_{\pi,u} G$ agree on $C^*_u(G)$.
\end{corollary}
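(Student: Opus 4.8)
The plan is to derive this as an immediate consequence of Proposition \ref{rad3} together with the faithfulness hypothesis. First I would unwind what faithfulness of the $G$-action on $\mathcal{B}_{\pi,u}$ buys us: by Lemma \ref{t} (and the discussion immediately following it), the kernel of the action of $G$ on $\mathcal{B}_{\pi,u}$ is precisely the $(\pi,u)$-amenable radical ${\rm Rad}_{\pi,u}(G)$. Faithfulness of the action is exactly the statement that this kernel is trivial, i.e. ${\rm Rad}_{\pi,u}(G) = \{e\}$. This is the conceptual heart of the argument, and it requires no computation beyond quoting the earlier identification.

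Next I would apply Proposition \ref{rad3} to any extendable trace $\tau$ on $A\rtimes_{\pi,u} G$: such a trace is supported on ${\rm Rad}_{\pi,u}(G)$, meaning $\tau(u_g) = 0$ for every $g \notin {\rm Rad}_{\pi,u}(G)$. Under the faithfulness hypothesis the radical is trivial, so $\tau(u_g) = 0$ for every $g \neq e$, while $\tau(u_e) = \tau(1) \in \{0,1\}$ depending on whether $\tau$ is the zero functional or a genuine (normalized) trace. Since $C^*_u(G)$ sits inside $A\rtimes_{\pi,u} G$ (in the unital case) or inside its multiplier algebra $M(A\rtimes_{\pi,u} G)$ in general, as recalled in the paragraph preceding the corollary, the values $\tau(u_g)$ for $g\in G$ completely determine the restriction of $\tau$ to $C^*_u(G)$: the linear span of $u(G)$ is dense in $C^*_u(G)$, and $\tau$ is norm-continuous.

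Therefore I would conclude as follows. If $A\rtimes_{\pi,u} G$ admits no extendable trace, the first alternative holds vacuously. Otherwise, let $\tau_1$ and $\tau_2$ be two extendable traces. Both are determined on $u(G)$ by the values just computed, namely $\tau_i(u_g) = 0$ for $g\neq e$ and $\tau_i(u_e)$ equal to the common normalization $\tau_i(1) = 1$. Hence $\tau_1$ and $\tau_2$ agree on the dense subalgebra generated by $u(G)$, and by continuity they agree on all of $C^*_u(G)$, establishing the second alternative.

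I expect the only genuinely substantive point to be the clean identification of ${\rm Rad}_{\pi,u}(G)$ with the kernel of the $G$-action, which lets faithfulness collapse the support of every extendable trace to the identity; everything downstream is a density-and-continuity argument. The one place demanding mild care is the normalization of $\tau(1)$: a trace in this paper is a $G$-map into $\mathbb{C}$, hence unital up to the zero map, so I would briefly note that any two such $G$-maps into $\mathbb{C}$ take the same value on $u_e = 1$, which is what forces genuine agreement rather than mere proportionality on $C^*_u(G)$.
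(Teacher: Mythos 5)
Your proof is correct and takes essentially the same route as the paper's: faithfulness gives ${\rm Rad}_{\pi,u}(G)=\{e\}$ (via the identification of the radical with the kernel of the action following Lemma \ref{t}), Proposition \ref{rad3} then forces every extendable trace to vanish on $u_g$ for $g\neq e$, and agreement on the copy of $C^*_u(G)$ inside $M(A\rtimes_{\pi,u}G)$ follows from the extension of traces to the multiplier algebra together with density of the span of $u(G)$ and norm continuity. The paper's own proof is exactly this argument, stated more tersely; your extra remark on the normalization $\tau(1)$ is a harmless refinement of a point the paper leaves implicit.
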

\begin{proof}
	If we have Rad$_{\pi,u}(G)=\{e\}$, and if there is any extendable trace $\tau$, then $\tau(u_g)=0$, for $g\neq e$. Since a trace on $A\rtimes_{\pi,u} G$ extend to a functional on $M(A\rtimes_{\pi,u} G)$, this means that any two extendable traces  agree on the copy of $C^*_u(G)$ inside  $M(A\rtimes_{\pi,u} G)$.  
\end{proof}

Back to the second example discussed after Definition \ref{b}, if $G$ has trivial amenable radical and the centralizer of each element is amenable (again the free group $\mathbb F_2$ has both properties) then the quasi-regular representations
associated to stabilizer subgroups are weakly contained in the regular representation and the restriction of the representation $u_g:=\lambda_g^G\rho_g^G$ to $\ell^2(G\backslash\{e\})$ is unitarily equivalent to the direct sum of these quasi-regular representations, and so is weakly contained in the regular representation, and so is $u$. Now $G$ acts faithfully on $\partial_FG$ (as the amenable radical is trivial). If moreover it act faithfully on $\mathcal{B}_{\iota_0,\lambda^G}$, then $C_{u_0}^*(G)\rtimes_{\iota_0,\lambda^G}G$ either has no extendable trace or any two extendable traces agree on $C_{\lambda}^*(G)$, by Corollary \ref{rad4}. Indeed, in this example, since the action of $G$ on $C_u^*(G)$ is inner (and so exterior equivalent to the trivial action), the crossed product $C_{u_0}^*(G)\rtimes_{\iota_0,\lambda^G}G$ is isomorphic with $C_{u_0}^*(G)\otimes_{\gamma} C_{\lambda}^*(G)$  (c.f., \cite[Remark 3.9]{pi}), for the C*-norm $\gamma$ defined by
$$\|a\otimes b\|_\gamma:=\|(\iota_0\rtimes\lambda_G)(a\otimes b)\|\ \ (a,b\in C_c(G)),$$
and extendable traces are tensors of traces on each factor.

\vspace{.2cm}
{\bf Data availability statement:}
Data sharing not applicable to this article as no datasets were generated or analysed during the current study.

-----------------------------------------------------------------------------------


\end{document}